\title[Log del Pezzo surfaces]{Log del Pezzo surfaces with not small fractional indices}
\author{Kento Fujita} 
\date{\today}
\subjclass[2010]{Primary 14J26; Secondary 14E30}
\keywords{del Pezzo surface, rational surface, extremal ray.}
\address{Research Institute for Mathematical Sciences, 
Kyoto University, Kyoto 606-8502 Japan}
\email{fujita@kurims.kyoto-u.ac.jp}
\newcommand{\pr}{\mathbb{P}}
\newcommand{\Z}{\mathbb{Z}}
\newcommand{\Q}{\mathbb{Q}}
\newcommand{\C}{\mathbb{C}}
\newcommand{\F}{\mathbb{F}}
\newcommand{\Pic}{\operatorname{Pic}}
\newcommand{\discrep}{\operatorname{discrep}}
\newcommand{\mult}{\operatorname{mult}}
\newcommand{\coeff}{\operatorname{coeff}}
\newcommand{\FS}{\operatorname{FS}}
\newcommand{\bi}{\bfseries\itshape}
\newcommand{\sI}{\mathcal{I}}
\newcommand{\sO}{\mathcal{O}}
\newcommand{\dm}{\mathfrak{m}}
\newtheorem{thm}{Theorem}[section]
\newtheorem{lemma}[thm]{Lemma}
\newtheorem{proposition}[thm]{Proposition}
\newtheorem{corollary}[thm]{Corollary}
\newtheorem{claim}[thm]{Claim}
\newtheorem{fact}[thm]{Fact}
\theoremstyle{definition}
\newtheorem{definition}[thm]{Definition}
\newtheorem{remark}[thm]{Remark}
\newtheorem{notation}[thm]{Notation}
\newtheorem{example}[thm]{Example}
\newtheorem*{ack}{Acknowledgments} 
\newtheorem*{nott}{Notation and terminology}
\begin{document}

\maketitle 

\begin{abstract}
For a log del Pezzo surface $S$, the fractional index $r(S)\in\Q_{>0}$ is the 
maximum of $r$ with which $-K_S$ can be written as $r$ times some Cartier divisor. 
We classify all the log del Pezzo surfaces $S$ with $r(S)\geq 1/2$, after the 
technique of Nakayama. 
\end{abstract}


\section{Introduction}\label{intro_section}

The aim of the article is to classify a certain class of normal projective surface 
with some positivity condition. A normal projective variety $V$ is called a 
\emph{log Fano variety} if $V$ is log-terminal (that is, the canonical divisor 
$K_V$ is $\Q$-Cartier and the discrepancy of $V$ is strictly bigger than $-1$) and 
the anti-canonical $-K_V$ is ample ($\Q$-Cartier). Two-dimensional log Fano varieties 
are denoted by \emph{log del Pezzo surfaces}. 
If the base field is the complex number field $\C$, 
then the notion of log-terminal surface singularities is nothing but the notion of 
quotient singularities. 
For a log Fano variety $V$, let 
\[
r(V):=\sup\{r\in\Q_{>0}\,\,|\,\,-K_V\equiv rL\, \,\text{ for some Cartier divisor }\,
L\}
\]
and we call $r(V)$ the \emph{fractional index} of $V$. 
Around the end of 1980's, Alexeev \cite{A88, A91} considered the following set: 
\[
\FS_n:=\{r(V)\,\,|\,\,V:\,n\text{-dimensional log Fano variety}\}.
\]
The set is closely related to the minimal model program. 

\begin{fact}\label{fa}
Assume that the base field is the complex number field $\C$. 
\begin{enumerate}
\renewcommand{\theenumi}{\arabic{enumi}}
\renewcommand{\labelenumi}{(\theenumi)}
\item\label{fa1}
\cite{A88}
The set $\FS_2$ satisfies the ascending chain condition. 
Moreover, the set of accumulation points of $\FS_2$ is 
\[
\{0\}\cup\left\{\frac{1}{k}\,\,\bigg|\,\,k\in\Z_{>0}\right\}.
\]
\item\label{fa2}
\cite{A91}
For any $n\geq 2$, the equality $\FS_n\cap(n-2, \infty)=\FS_2+(n-2)$
holds.
\item\label{fa3}
\cite{HMX}
The set $\FS_n$ satisfies the ascending chain condition. 
\end{enumerate}
\end{fact}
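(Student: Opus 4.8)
I would organize all three assertions around one device, the \emph{cone construction}, together with boundedness of the relevant families. Writing $-K_V\equiv r_0L_0$ for the primitive Cartier class $L_0$ on the ray $\R_{>0}(-K_V)$, one has $r(V)=r_0$ and $(-K_V)^n=r_0^n\,(L_0)^n$ with $(L_0)^n\in\Z_{>0}$, so $r(V)\leq\bigl((-K_V)^n\bigr)^{1/n}$ and an upper bound on the anti-canonical degree bounds $r(V)$. The affine cone $C=C(V,L_0)$ is log-terminal exactly because $V$ is log Fano (so $r_0>0$); it has dimension $n+1$ with a quotient singularity at the vertex, and a direct discrepancy computation --- for $V=\pr^n$, $L_0=\sO(d)$ the projective cone is $\pr(1,\dots,1,d)$ with vertex $\frac1d(1,\dots,1)$ --- shows that the minimal log discrepancy at the vertex equals $r(V)$. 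This ties $\FS_n$ at once to $\FS_{n+1}$, through the projective cone, and to singularity invariants in dimension $n+1$.

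For (1) I would first invoke boundedness of log del Pezzo surfaces of fractional index bounded below: once $r(S)\geq\varepsilon$ for fixed $\varepsilon>0$, the surfaces form a bounded family (after Alexeev--Nikulin), so their minimal resolutions carry a bounded configuration of exceptional curves and bounded Picard number. Within such a bounded family $r(S)$ ranges over a set with no infinite ascending chain, which gives the ascending chain condition: an ascending sequence would, past a fixed threshold, be trapped in a single bounded family and hence stabilize. I would locate the accumulation points by exhibiting explicit degenerating sequences; for example $\pr(1,1,n)$ has $-K\equiv(n+2)\sO(1)\equiv\frac{n+2}{n}\sO(n)$, hence $r=1+2/n\downarrow1$, and analogous weighted families with a cyclic quotient singularity of growing order yield the remaining points $1/k$, each approached from above in accordance with the ascending chain condition.

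For (2) the inclusion $\FS_2+(n-2)\subseteq\FS_n$ comes from the projective cone: if $-K_S\equiv r(S)L_0$ then $\bar C(S,L_0)$ is again log Fano and the adjunction formula for cones gives $r\bigl(\bar C(S,L_0)\bigr)=r(S)+1$, so iterating $(n-2)$ times realizes every value of $\FS_2+(n-2)$, all exceeding $n-2$. The reverse inclusion is the substantive point: I would argue that $r(V)>n-2$ is too large a Fano-type index to be supported by a genuine fibration, so a Kobayashi--Ochiai/cone-theorem analysis forces $V$ to be an iterated projective cone over a log del Pezzo surface, each de-coning step lowering $r$ by exactly $1$ and depositing the residual value in $\FS_2$.

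For (3) I would reduce to the dimension-$(n+1)$ picture of the first paragraph, recasting each $r(V)$ as a threshold invariant of the cone --- a log canonical threshold, or an anti-canonical volume --- to which the global ascending chain condition of Hacon--McKernan--Xu applies; pinning down the precise invariant their theorem controls is itself part of the work. The main obstacle throughout is the rigidity input: proving in (2) that a large fractional index forces a cone rather than a fibration, and supplying in (1) and (3) the uniform boundedness that tames the families. That rigidity and boundedness are exactly the deep content contributed by the cited papers.
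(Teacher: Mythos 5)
This statement is labelled a \emph{Fact} in the paper and is quoted from \cite{A88}, \cite{A91} and \cite{HMX} with no proof given, so your sketch has to be measured against those cited theorems rather than against an argument in the text; measured that way, it has a genuine gap in each of the three items. For (1), the boundedness claim on which your whole ACC argument rests is false: a lower bound on the \emph{fractional} index does not place log del Pezzo surfaces in a bounded family. Boundedness needs a lower bound on discrepancies ($\varepsilon$-log terminality) or an upper bound on the Cartier index (the setting of Alexeev--Nikulin \cite{AN1}), and neither follows from $r(S)\geq\varepsilon$. The classification in this very paper refutes your claim: the types $[(2n-1,n+1),n;2(n-2),n-2]_1$ of Theorem \ref{mainthm1} give, for every $n\geq 3$, a log del Pezzo surface with $r(S)=(n+1)/(2n-1)>1/2$ whose minimal resolution contains a $(-n)$-curve over a singular point, and a bounded family can realize only finitely many resolution graphs; so no bounded family contains all surfaces with $r(S)>1/2$, and ``an ascending sequence is eventually trapped in one bounded family'' has no justification. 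You also address only the easy half of the accumulation-point statement (exhibiting sequences converging to each $1/k$), not the hard half that no other accumulation points occur.

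For (2), the rigidity claim --- that $r(V)>n-2$ forces $V$ to be an iterated projective cone --- is false: the smooth quadric threefold has $r=3>1$ and the quintic del Pezzo threefold $V_5$ has $r=2>1$, yet neither is a cone (a projective cone $C(V,L)$ is singular at its vertex unless $(V,L)\simeq(\pr^{n-1},\sO(1))$). What \cite{A91} actually proves is a ``good divisor'' (ladder) theorem in the style of Fujita: a suitable member $D\in|L|$ of the fundamental system is itself a log Fano of dimension $n-1$ with fractional index lowered by one, and one descends by adjunction along such members; the varieties themselves are in general not cones, so your de-coning step must be replaced by this adjunction argument, which is precisely the deep content of \cite{A91}. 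For (3), the invariant your cone construction attaches to the vertex is a log discrepancy --- the exceptional divisor of the blow-up of the vertex of $C(V,L_0)$ has log discrepancy exactly $r(V)$ --- so the reduction you propose lands on ACC for minimal log discrepancies, which is Shokurov's conjecture and remains open; it is not what \cite{HMX} prove. Their results are ACC for log canonical thresholds and a global ACC for numerically trivial log canonical pairs, and it is from the latter, applied to suitably chosen numerically trivial pairs on the Fano varieties themselves (not to cone singularities in dimension $n+1$), that the ACC for $\FS_n$ is deduced in \cite{HMX}.
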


In this article, we analyze the set $\FS_2$. First, we can easily analyze the set 
$\FS_2\cap[1, \infty)$. 

\begin{proposition}[{=Proposition \ref{big_prop}}]\label{intro_big_prop}
Let $S$ be a log del Pezzo surface. 
\begin{enumerate}
\renewcommand{\theenumi}{\arabic{enumi}}
\renewcommand{\labelenumi}{(\theenumi)}
\item\label{intro_big_prop1}\cite{FJT}
If $r(S)>1$, then $S$ is isomorphic to one of 
$\pr^2$ $(r(S)=3)$, $\pr^1\times\pr^1$ $(r(S)=2)$, or 
$\pr(1,1,n)$ $(n\geq 2)$ $(r(S)=(n+2)/n)$.
\item\label{intro_big_prop2}\cite{brenton, demazure, HW}
If $r(S)=1$, then $S$ has at most du Val singularities and such $S$ has been classified. 
\end{enumerate}
\end{proposition}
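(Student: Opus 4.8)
The plan is to handle the two statements separately, starting from the more elementary part~(\ref{intro_big_prop2}). If $r(S)=1$, then there is a Cartier divisor $L$ with $-K_S\equiv L$, so the Weil divisor class $-K_S$ is numerically equivalent to a Cartier divisor; since a log del Pezzo surface is rational with $H^1(\sO_S)=H^2(\sO_S)=0$, a standard argument promotes this to linear equivalence and shows that $K_S$ itself is Cartier, i.e.\ that $S$ is Gorenstein. Consequently its log-terminal (equivalently, canonical) singularities are du Val. Passing to the minimal resolution $\tilde S\to S$ then produces a smooth weak del Pezzo surface, and these are classified by the degree $K_S^2\in\{1,\dots,9\}$ together with the dual graph of the contracted $(-2)$-curves; this is exactly the classification recorded in \cite{brenton, demazure, HW}.

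For part~(\ref{intro_big_prop1}) I would first extract the governing numerics from a single polarization. By hypothesis there is a Cartier divisor $L$ with $-K_S\equiv rL$ and $r>1$; since $-K_S$ is ample, $L$ is ample and $d:=L^2$ is a positive integer. Computing the sectional genus of $(S,L)$ gives
\begin{equation*}
2g(S,L)-2=(K_S+L)\cdot L=(1-r)L^2=(1-r)d<0.
\end{equation*}
As the sectional genus of an ample polarization is a non-negative integer, the only possibility is $g(S,L)=0$, and then the displayed identity sharpens to the Diophantine relation $(r-1)d=2$. Thus $r=(d+2)/d$ and $(-K_S)^2=r^2d=(d+2)^2/d$, values that already single out the three expected families: $d=1$ gives $r=3$, $d=2$ gives $r=2$, and $d=n$ gives $r=(n+2)/n$.

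It remains to identify the surface underlying such a sectional-genus-zero polarized pair. The plan is to invoke the classification of polarized surfaces of sectional genus zero, in the form valid for log-terminal surfaces: $(S,L)$ is then $(\pr^2,\sO(1))$ or $(\pr^2,\sO(2))$, a $\pr^1$-scroll $\F_m$, or a generalized cone over a rational normal curve. Requiring $-K_S$ to be ample discards $\F_m$ for $m\geq 2$, and requiring $r(S)>1$ further discards $\F_1$ (for which $-K_{\F_1}$ is primitive in $\Pic(\F_1)$, so $r=1$); the scroll $\F_0$ is $\pr^1\times\pr^1$ with $r=2$, and the cone is precisely $\pr(1,1,n)$ with $r=(n+2)/n$. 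Intersecting the list with the two conditions leaves exactly $\pr^2$, $\pr^1\times\pr^1$ and $\pr(1,1,n)$ $(n\geq 2)$, which is the statement of \cite{FJT}.

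The hard part will be the last step in the presence of quotient singularities: the classical sectional-genus-zero classification is designed for smooth surfaces, so I would need both to justify the integrality and positivity of $g(S,L)$ through singular Riemann--Roch and to verify that the only cones arising are the weighted projective planes $\pr(1,1,n)$, with no exotic log del Pezzo surfaces intruding. I expect the cleanest route around this is Nakayama's technique via the minimal model program: the cone theorem reduces the classification to Picard number one, where the relation $(r-1)d=2$ forces $S\cong\pr(1,1,n)$ (with $n=1$ recovering $\pr^2$), and to a Mori fiber space $S\to\pr^1$ of Picard number two, where the fibration forces $S\cong\pr^1\times\pr^1$. The genuinely delicate point is then to prove that $r(S)>1$ is incompatible with Picard number $\geq 3$, which is where I anticipate the main technical work.
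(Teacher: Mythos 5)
Your argument for part (\ref{intro_big_prop2}) fails at its very first step, and the failure is not just a missing detail: the underlying principle is false. The vanishing $H^1(\sO_S)=H^2(\sO_S)=0$ (equivalently, the fact that $\Pic(S)$ is finitely generated and torsion-free) upgrades numerical equivalence to linear equivalence only for \emph{Cartier} divisors. What you need is the same statement for the $\Q$-Cartier \emph{Weil} divisor $-K_S-L$, and on log del Pezzo surfaces this is false: the class group $\Cl(S)$ can contain nontrivial torsion, i.e.\ numerically trivial Weil divisor classes that are not linearly trivial. This very paper documents the phenomenon: by Remark \ref{multiind_rmk} \eqref{multiind_rmk2} and Corollary \ref{235_cor} there exist log del Pezzo surfaces with $-a_0K_S\equiv b_0L_S$ but $-a_0K_S\not\sim b_0L_S$, so $-a_0K_S-b_0L_S$ is exactly such a torsion class. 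Hence ``$K_S$ is Cartier'' is the content to be proved and cannot be extracted from $-K_S\equiv L$ by a promotion argument. The paper's proof never touches this issue: it passes to the minimal resolution $\alpha\colon M\to S$, where $E_M:=-aK_{M/S}$ is effective by minimality and $a(K_M+L_M)\sim-E_M-(b-a)L_M$; nefness of $K_M+L_M$ forces $a=b$ and $E_M=0$, so $S$ has canonical, hence du Val, singularities, and Gorenstein-ness comes out as a consequence rather than going in as an input.

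For part (\ref{intro_big_prop1}) your relation $(r-1)d=2$ is a reasonable start, but the proof stops where the real work begins, as you yourself concede. Two essential steps are missing: the integrality and non-negativity of $g(S,L)$ (since $K_S$ is only $\Q$-Cartier, $(K_S+L\cdot L)$ is a priori only a rational number, so one must, e.g., produce a member of $|L|$ avoiding $\Sing S$ before adjunction applies), and the identification of $S$ from the numerics, which you only plan to invoke from a sectional-genus-zero classification whose singular version you do not establish. The MMP fallback does not close this gap either: extremal contractions of $S$ are birational modifications of $S$, so classifying the Picard-rank-one outputs and the Mori fiber spaces does not classify $S$ itself, and ruling out the birational case is precisely the ``main technical work'' you defer. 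The paper's proof avoids all of this by running the cone theorem on the \emph{smooth} surface $M$ instead of on $S$: when $b>a$ the divisor $K_M+L_M$ cannot be nef, and a $(K_M+L_M)$-negative extremal ray spanned by a $(-1)$-curve $\gamma$ would force $(L_M\cdot\gamma)=0$, making $\gamma$ $\alpha$-exceptional and contradicting minimality of the resolution; thus $M\simeq\pr^2$ or $\F_n$ by \cite[Theorem 2.1]{mori}, and the list $\pr^2$, $\pr^1\times\pr^1$, $\pr(1,1,n)$ follows by inspecting which contractions of these surfaces give $r(S)>1$. That single observation is the idea your proposal is missing.
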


We are mainly interested in the set $\FS_2\cap[1/2, 1)$. 

\begin{remark}\label{an-n-ot}
Let $S$ be a log del Pezzo surface over $\Bbbk$. 
\begin{enumerate}
\renewcommand{\theenumi}{\arabic{enumi}}
\renewcommand{\labelenumi}{(\theenumi)}
\item\label{an-n-ot1}
Assume that $-2K_S$ is Cartier. Such $S$ has been classified by 
Alexeev-Nikulin \cite{AN1, AN2, AN3} (for the case $\Bbbk=\C$) and Nakayama \cite{N} 
(for arbitrary $\Bbbk$).
\item\label{an-n-ot2}
Assume that $\Bbbk=\C$, $r(S)=2/3$ and $-3K_S$ is Cartier. 
Such $S$ has been treated by Ohashi-Taki \cite{OT} 
(see Remark \ref{OT_rmk}).
\end{enumerate}
\end{remark}

The main theorem in this article is to give a geometric classification of 
the log del Pezzo surfaces $S$ with $r(S)\in[1/2, 1)$ 
(Theorems \ref{mainthm1} and \ref{mainthm2}) over an 
algebraically closed field $\Bbbk$ of arbitrary characteristic. 
As a corollary, we can determine the 
set $\FS_2\cap[1/2, 1)$ (Corollary \ref{maincor}).

The main idea for the classification comes from Nakayama's technique used in \cite{N}. 
We consider the following three objects: 

\begin{itemize}
\item
A log del Pezzo surface $S$ and positive integers $a$, $b$ such that 
$r(S)=b/a\in[1/2, 1)$ and 
$-aK_S\sim bL_S$ for some Cartier divisor. The pair $(a, b)$ is called a 
\emph{multi-index} of $S$. 
\item
An \emph{$(a, b)$-basic pair} $(M, E_M)$ such that $M$ is a nonsingular projective 
rational surface, $E_M$ is an effective divisor on $M$ and satisfies the conditions 
in Section \ref{basic_section}. 
\item
An \emph{$(a, b)$-fundamental triplet} $(X, E_X, \Delta)$ such that 
$X$ is either $\pr^2$ or $\F_n$, $E_X$ is an effective divisor on $X$, $\Delta$ is 
a zero-dimensional subscheme of $X$ and satisfies the conditions in Section 
\ref{triplet_section}. 
\end{itemize}

The relationship among those three are as follows. From a log del Pezzo surface $S$ 
and a multi-index $(a, b)$ of $S$, we take the minimal resolution $\alpha\colon M\to S$ 
of $S$. Set $E_M:=-aK_{M/S}$. 
Then the pair $(M, E_M)$ is an $(a, b)$-basic pair. 
Conversely, from an $(a, b)$-basic pair $(M, E_M)$, some positive multiple of 
$-aK_M-E_M$ determines a birational morphism $\alpha\colon M\to S$ such that 
$S$ is a log del Pezzo surface, $(a, b)$ is a multi-index of $S$ and $\alpha$ is the 
minimal resolution of $S$. The relationship is treated in Section \ref{basic_section}. 
From an $(a, b)$-basic pair $(M, E_M)$, by taking $(-E_M)$-minimal model program, 
we get a birational morphism $\phi\colon M\to X$ such that $X=\pr^2$ or $\F_n$. Set 
$E_X:=\phi_*E_M$. There exists a zero-dimensional subscheme $\Delta\subset X$ 
satisfying the $(\nu1)$-condition such that $\phi$ is the elimination of $\Delta$ 
(see Section \ref{elim_subsec}). Then the triplet $(X, E_X, \Delta)$ is an 
$(a, b)$-fundamental triplet. Conversely, from an $(a, b)$-fundamental triplet 
$(X, E_X, \Delta)$, the pair $(M, (E_X)_M^{\Delta, a-b})$ is an 
$(a, b)$-basic pair, where $\phi\colon M\to X$ is the elimination of $\Delta$ and 
$(E_X)_M^{\Delta, a-b}=\phi^*E_X-(a-b)K_{M/X}$. 
The relationship is treated in Section \ref{triplet_section}. 

In Section \ref{classify_section}, we classify all of the (normalized) $(a, b)$-fundamental 
triplets with $1/2\leq b/a<1$ and $b>1$. The main strategy to classy the triplet 
$(X, E, \Delta)$ is to exclude the possibility such that $E$ contains singular components. 
The argument is prepared in Section \ref{singcurve_section}. The idea is simple but is 
important in order to classify the triplets. 
In Section \ref{graph_section}, we tabulate the list of the dual graphs of 
non-Gorenstein singularities on $S$ (see Tables \ref{maintable1} and \ref{maintable2}). 
This is a direct consequence of the results in Section \ref{classify_section}.

\begin{ack}
The author would like to thank Doctor Kazunori Yasutake for useful discussion 
and comments.
The author is partially supported by a JSPS Fellowship for Young Scientists. 
\end{ack}

\begin{nott}
We work in the category of algebraic (separated and of finite type) scheme over a 
fixed algebraically closed field $\Bbbk$ of arbitrary characteristic. 
A \emph{variety} means a reduced and irreducible algebraic scheme. A \emph{surface} 
means a two-dimensional variety. 

For a normal variety $X$, we say that $D$ is a 
\emph{$\Q$-divisor} (resp.\ \emph{divisor} or \emph{$\Z$-divisor}) 
if $D$ is a finite sum $D=\sum a_iD_i$ where $D_i$ are prime divisors and 
$a_i\in\Q$ (resp.\ $a_i\in\Z$). 
For a $\Q$-divisor $D=\sum a_iD_i$, the value $a_i$ is denoted by $\coeff_{D_i}D$ and 
set $\coeff D:=\{a_i\}_i$. 
A normal variety $X$ is called \emph{log-terminal} if the canonical divisor $K_X$ is 
$\Q$-Cartier and the discrepancy $\discrep(X)$ of $X$ is bigger than $-1$ 
(see \cite[\S 2.3]{KoMo}).
For a proper birational morphism $f\colon Y\to X$ between normal varieties 
such that both $K_X$ and $K_Y$ are $\Q$-Cartier, we set 
\[
K_{Y/X}:=\sum_{E_0\subset Y \,\,f\text{-exceptional}}a(E_0, X)E_0, 
\]
where $a(E_0, X)$ is the discrepancy of $E_0$ with respects to $X$ 
(see \cite[\S 2.3]{KoMo}). (We note that 
if $aK_X$ and $aK_Y$ are Cartier for $a\in\Z_{>0}$, then 
$aK_{Y/X}$ is a $\Z$-divisor.)

For a nonsingular surface $S$ and a projective curve $C$ which is a closed subvariety 
of $S$, the curve $C$ is called a \emph{$(-n)$-curve} if $C$ is a nonsingular rational 
curve and $(C^2)=-n$.

Now we determine the dual graphs for divisors on surfaces. 
Let $S$ be a nonsingular surface and let $D=\sum a_jD_j$ be an effective divisor on $S$ 
$(a_j>0)$ such that $|D|$ (the support of $D$) 
is simple normal crossing and $D_i$, $D_j$ are intersected 
at most one point. (It is sufficient in our situation.) 
The \emph{dual graph} of $D$ is defined as follows. A vertex corresponds 
to a component $D_j$. Let $v_j$ be the vertex corresponds to $D_j$. The 
$v_i$ and $v_j$ are joined by a (simple) line if and only if $D_i$, $D_j$ are intersected. 
In the dual graphs of divisors, a vertex corresponding to $(-n)$-curve is expressed 
as follows: 

\begin{table}[h]
\begin{tabular}{|c|c|c|c|}\hline
$(-1)$-curve & $(-2)$-curve & $(-3)$-curve & $(-n)$-curve \\ \hline
\begin{picture}(10,10)(0,0)
     \put(5, 3){\circle{10}}
\end{picture}
&
\begin{picture}(10,10)(0,0)
     \put(5, 3){\circle*{10}}
\end{picture}
&
\begin{picture}(10,10)(0,0)
     \put(5, 3){\circle*{6}}
     \put(5, 3){\circle{10}}
\end{picture}
&
\begin{picture}(10,10)(0,0)
     \put(5, .5){\makebox(0, 0){\textcircled{\tiny $n$}}}
\end{picture}
\\ 
\hline
\end{tabular}
\end{table}

On the other hand, an arbitrary irreducible curve is expressed by the symbol 
{\large$\oslash$} when it is not necessary a $(-n)$-curve.

Let $\F_n\to\pr^1$ be a Hirzebruch surface $\pr_{\pr^1}(\sO\oplus\sO(n))$ of 
degree $n$ with the $\pr^1$-fibration. 
A section $\sigma\subset\F_n$ with $\sigma^2=-n$ 
is called a \emph{minimal section}. If $n>0$, then such $\sigma$ is unique.
A section $\sigma_{\infty}$ with $\sigma\cap\sigma_\infty=\emptyset$ is called a
\emph{section at infinity}. For a section at infinity $\sigma_\infty$, we have 
$\sigma_\infty\sim\sigma+nl$, where $l$ is a fiber of the fibration.
\end{nott}

\section{Preliminaries}\label{prelim_section}

\subsection{Elimination of subschemes}\label{elim_subsec}

We recall the results in \cite[\S 2]{N}. 
Let $X$ be a nonsingular surface and $\Delta$ be a zero-dimensional subscheme 
of $X$. The defining ideal sheaf of $\Delta$ is denoted by $\sI_{\Delta}$.

\begin{definition}\label{nu1_dfn}
Let $P$ be a point of $\Delta$. 
\begin{enumerate}
\renewcommand{\theenumi}{\arabic{enumi}}
\renewcommand{\labelenumi}{(\theenumi)}
\item\label{nu1_dfn1}
Let 
$
\nu_P(\Delta):=\max\{\nu\in\Z_{>0} \, | \, \sI_{\Delta}\subset\dm_P^\nu\},
$
where $\dm_P$ is the maximal ideal sheaf in $\sO_X$ defining $P$.
If $\nu_P(\Delta)=1$ for any $P\in\Delta$, then we say that $\Delta$ \emph{satisfies 
the $(\nu1)$-condition}.
\item\label{nu1_dfn2}
The \emph{multiplicity} $\mult_P\Delta$ of $\Delta$ at $P$ is given by 
the length of the Artinian local ring $\sO_{\Delta, P}$.
\item\label{nu1_dfn3}
The \emph{degree} $\deg\Delta$ of $\Delta$ is 
given by $\sum_{P\in\Delta}\mult_P\Delta$.
\end{enumerate}
\end{definition}

\begin{definition}\label{mult_div}
For an effective divisor $D$ and a point $P$, we set 
\[
\mult_PD:=\max\{\nu\in\Z_{>0} \, | \, \sO_X(-D)\subset\dm_P^\nu\}.
\]
Let $\pi\colon Y\to X$ be the blowing up along $P$ and let $e$ be 
the exceptional curve. Then $\mult_PD$ is equal to $\coeff_e\pi^*D$. 
\end{definition}

\begin{definition}\label{elim_dfn}
Assume that $\Delta$ satisfies the $(\nu1)$-condition. 
Let $V\to X$ be the blowing up along $\Delta$. 
The \emph{elimination} of $\Delta$ is the birational projective morphism 
$\phi\colon M\to X$ which is defined as the composition of the minimal resolution 
$M\to V$ of $V$ and the morphism $V\to X$. 
For any divisor $E$ on $X$ and for any positive integer $s$, we set 
$E_M^{\Delta,s}:=\phi^*E-sK_{M/X}$. 
\end{definition}

\begin{proposition}[{\cite[Proposition 2.9]{N}}]\label{elim_prop}
\begin{enumerate}
\renewcommand{\theenumi}{\arabic{enumi}}
\renewcommand{\labelenumi}{(\theenumi)}
\item\label{elim_prop1}
Assume that the subscheme $\Delta$ satisfies the $(\nu1)$-condition 
and let $\phi\colon M\to X$ 
be the elimination of $\Delta$. 
Then the anti-canonical divisor $-K_M$ is $\phi$-nef. 
More precisely, for any $P\in\Delta$ with $\mult_P\Delta=k$, 
the set-theoretic inverse image $\phi^{-1}(P)$ is the 
straight chain $\sum_{j=1}^k\Gamma_j$ of 
nonsingular rational curves and the dual graph of $\phi^{-1}(P)$ is the following: 
\begin{center}
    \begin{picture}(150, 35)(0, 50)
    \put(0, 60){\circle*{10}}
    \put(0, 75){\makebox(0, 0)[b]{$\Gamma_{1}$}}
    \put(5, 60){\line(1, 0){30}}
    \put(40, 60){\circle*{10}}
    \put(40, 75){\makebox(0, 0)[b]{$\Gamma_{2}$}}
    \put(45, 60){\line(1, 0){20}} 
    \put(67, 60){\line(1, 0){2}}
    \put(71, 60){\line(1, 0){2}}
    \put(75, 60){\line(1, 0){2}}    
    \put(79, 60){\line(1, 0){21}}
    \put(105, 60){\circle*{10}}
    \put(105, 75){\makebox(0, 0)[b]{$\Gamma_{k-1}$}}
    \put(110, 60){\line(1, 0){30}}
    \put(145, 60){\circle{10}}
    \put(145, 75){\makebox(0, 0)[b]{$\Gamma_{k}$}}
    \end{picture}
\end{center}
\item\label{elim_prop2}
Conversely, for a non-isomorphic proper birational morphism $\phi\colon M\to X$
between nonsingular surfaces such that $-K_M$ is $\phi$-nef, 
the morphism $\phi$ 
is the elimination of $\Delta$ which satisfies the $(\nu1)$-condition 
defined by the ideal $\sI_\Delta:=\phi_*\sO_M(-K_{M/X})$.
\end{enumerate}
\end{proposition}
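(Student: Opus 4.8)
The plan is to treat both statements locally over $X$, since the fibres of $\phi$ over distinct points of $X$ are independent; so I fix $P\in X$ and, after localizing, assume that $\phi$ is an isomorphism away from $P$. For part \eqref{elim_prop1} the first step is to normalize $\sI_\Delta$ at $P$. The $(\nu1)$-condition means $\sI_{\Delta,P}\not\subset\dm_P^2$, so $\sI_{\Delta,P}$ contains a function that is part of a regular system of parameters; choosing (formal or \'etale) coordinates $(x,y)$ with $x\in\sI_{\Delta,P}$, the quotient $\sI_{\Delta,P}/(x)$ is an ideal of $\sO_{X,P}/(x)\cong\Bbbk[[y]]$ of colength $\mult_P\Delta=k$, hence equals $(y^k)$. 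Thus $\sI_{\Delta,P}=(x,y^k)$, and it suffices to resolve this monomial ideal.

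Next I would resolve $(x,y^k)$ explicitly. Blowing up $P$ and iterating, each successive blow-up is centred at the intersection of the strict transform of $\{x=0\}$ with the newest exceptional curve; in the relevant chart the rational map $[x:y^k]$ reads $[x^{(j)}:y^{k-j}]$ after $j$ blow-ups, so exactly $k$ blow-ups are needed, producing exceptional curves $\Gamma_1,\dots,\Gamma_k$. A direct self-intersection bookkeeping, using that each centre lies on the previous exceptional curve and on the strict transform of $\{x=0\}$ and hence separates them, shows that $\Gamma_1,\dots,\Gamma_{k-1}$ are $(-2)$-curves and $\Gamma_k$ is a $(-1)$-curve, forming the asserted straight chain. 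I would then check that this tower is the minimal resolution of $\Bl_\Delta X$ (which carries a single $A_{k-1}$ singularity) by noting that precisely the chain of $(-2)$-curves $\Gamma_1,\dots,\Gamma_{k-1}$ is contracted. Finally $-K_M$ is $\phi$-nef: by adjunction $-K_M\cdot\Gamma_j=2+\Gamma_j^2$, which is $0$ on each $(-2)$-curve and $1$ on $\Gamma_k$, and the $\Gamma_j$ are all the $\phi$-contracted curves.

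For part \eqref{elim_prop2} I would induct on the number $k$ of exceptional curves of $\phi$ over $P$. Every exceptional prime $\Gamma$ is a smooth rational curve, and $-K_M\cdot\Gamma=2+\Gamma^2\geq0$ together with $\Gamma^2\leq-1$ forces $\Gamma^2\in\{-1,-2\}$. For the inductive step, pick a $(-1)$-curve $\Gamma\subset\phi^{-1}(P)$ (one exists, arising from the last blow-up) and blow it down, $\psi\colon M\to M_1$ with $\phi=\phi_1\circ\psi$. For any $\phi_1$-contracted curve $C'$ with strict transform $C$ one computes $-K_{M_1}\cdot C'=(-K_M+\Gamma)\cdot C\geq0$, so $-K_{M_1}$ is $\phi_1$-nef and the induction hypothesis gives that $\phi_1^{-1}(P)$ is a chain of $(-2)$-curves terminating in a single $(-1)$-curve. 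Now $M=\Bl_QM_1$ with $Q=\psi(\Gamma)$, and the crux is that $\phi$-nefness pins down $Q$: if $Q$ lay on any $(-2)$-curve, or at a node of the chain, some strict transform would acquire self-intersection $-3$ and violate $-K_M\cdot(\,\cdot\,)\geq0$; hence $Q$ must be a smooth point of the terminal $(-1)$-curve, away from its node. This extends the chain by one further $(-2)$-curve followed by the new $(-1)$-curve $\Gamma$, reproducing the structure of part \eqref{elim_prop1}.

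It then remains to identify the subscheme. The discrepancies along the tower satisfy $a(\Gamma_j,X)=j$, since each centre lies on a single exceptional curve and so $a(\Gamma_j,X)=1+a(\Gamma_{j-1},X)$; hence $K_{M/X}=\sum_{j=1}^{k}j\,\Gamma_j$. Because $\operatorname{ord}_{\Gamma_j}x=j$ and $\operatorname{ord}_{\Gamma_j}y^k=k\geq j$, the pulled-back ideal is invertible with $(x,y^k)\sO_M=\sO_M(-\sum_j j\,\Gamma_j)=\sO_M(-K_{M/X})$, and a short monomial computation yields $\phi_*\sO_M(-K_{M/X})=(x,y^k)$. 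Thus $\sI_\Delta:=\phi_*\sO_M(-K_{M/X})$ is locally $(x,y^k)$, which contains the coordinate $x$ and therefore satisfies the $(\nu1)$-condition with $\mult_P\Delta=k$; by part \eqref{elim_prop1} its elimination is exactly the tower, recovering $\phi$. I expect the main obstacle to be the inductive step of part \eqref{elim_prop2}: verifying that $\phi$-nefness survives the blow-down of a $(-1)$-curve and, above all, that it forces each successive centre to lie at the free end of the chain. The local resolution of $(x,y^k)$ in part \eqref{elim_prop1} and the discrepancy and ideal computations are then routine once the normal form $\sI_{\Delta,P}=(x,y^k)$ is in hand.
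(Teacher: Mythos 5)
The paper does not actually prove this proposition: it is imported verbatim from Nakayama, cited as \cite[Proposition 2.9]{N}, so there is no in-paper argument to compare yours against. Your reconstruction is essentially correct, and it runs along the same lines as Nakayama's original local analysis: the normal form $\sI_{\Delta,P}=(x,y^k)$ extracted from the $(\nu1)$-condition, the explicit principalization tower showing that $\Bl_\Delta X$ has a single $A_{k-1}$-point whose minimal resolution is the chain $\Gamma_1,\dots,\Gamma_k$, adjunction ($-K_M\cdot\Gamma_j=2+\Gamma_j^2$) for $\phi$-nefness, and, for the converse, induction by contracting a $(-1)$-curve in the fibre. The two computations that carry the converse are both right: $-K_{M_1}\cdot C'=(-K_M+\Gamma)\cdot C\geq 0$ shows nefness survives the blow-down, and the self-intersection drop to $-3$ rules out every position for the new centre except a free point of the terminal $(-1)$-curve.

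One step in part (2) is, however, used without justification, and it is the only real gap: once the induction yields the chain structure of $\phi^{-1}(P)$, you immediately compute with coordinates $(x,y)$ satisfying $\operatorname{ord}_{\Gamma_j}x=j$ and $\operatorname{ord}_{\Gamma_j}y=1$. In part (1) such coordinates exist because you start from the ideal and normalize it; in part (2) there is no ideal yet, and the existence of a smooth curve germ through $P$ whose successive strict transforms pass through all the centres of the tower is precisely what needs proof --- without it, the identification $\phi_*\sO_M(-K_{M/X})=(x,y^k)$ is circular. The gap is fillable in a few lines: take a germ $D_M$ meeting $\Gamma_k$ transversally at a general point and disjoint from the other $\Gamma_j$, and set $D:=\phi(D_M)$; writing $\phi^*D=D_M+\sum_j m_j\Gamma_j$ and solving $(\phi^*D\cdot\Gamma_i)=0$ against the chain intersection matrix forces $m_j=j$, so $\mult_PD=m_1=1$, i.e.\ $D$ is smooth at $P$, and its strict transforms pass through every centre. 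Taking $x$ to be a local equation of $D$, and $y$ a local equation of a germ transverse to $D$, supplies the missing coordinates; with that inserted your argument is complete.
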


Now we see some examples of the dual graphs of $E_M^{\Delta, s}$.

\begin{example}\label{Sit1}
Assume that $\Delta$ satisfies the $(\nu1)$-condition such that $|\Delta|=\{P\}$.
Let $E=mE_1$ be a non-zero effective divisor such that $E_1$ is reduced and 
nonsingular at $P\in E_1$. Let $k:=\deg\Delta$ and $l:=\mult_P(\Delta\cap E_1)$. 
By \cite[Lemma 2.17]{N}, we have 
\[
E_M^{\Delta, s}=mE_{1,M}+\sum_{i=1}^li(m-s)\Gamma_i+\sum_{i=l+1}^k(ml-si)\Gamma_i,
\]
where $E_{1,M}$ is the strict transform of $E_1$ on $M$. 
Moreover, the dual graph of $\phi^{-1}(E)$ is the following: 
\begin{center}
    \begin{picture}(180, 90)(0, 0)
    \put(0, 60){\circle*{10}}
    \put(0, 75){\makebox(0, 0)[b]{$\Gamma_{1}$}}
    \put(5, 60){\line(1, 0){20}} 
    \put(27, 60){\line(1, 0){2}}
    \put(31, 60){\line(1, 0){2}}
    \put(35, 60){\line(1, 0){2}}    
    \put(39, 60){\line(1, 0){21}}
    \put(65, 60){\circle*{10}}
    \put(65, 75){\makebox(0, 0)[b]{$\Gamma_l$}}
    \put(70, 60){\line(1, 0){20}} 
    \put(92, 60){\line(1, 0){2}}
    \put(96, 60){\line(1, 0){2}}
    \put(100, 60){\line(1, 0){2}}    
    \put(104, 60){\line(1, 0){21}}
    \put(130, 60){\circle*{10}}
    \put(130, 75){\makebox(0, 0)[b]{$\Gamma_{k-1}$}}
    \put(135, 60){\line(1, 0){30}}
    \put(170, 60){\circle{10}}
    \put(170, 75){\makebox(0, 0)[b]{$\Gamma_{k}$}}
    \put(65, 55){\line(0, -1){29.6}}
    \put(65, 20){\makebox(0, 0){\Large$\oslash$}}
    \put(88, 13){\makebox(0, 0)[b]{$E_{1,M}$}}
    \end{picture}
\end{center}
Therefore, the divisor $E_M^{\Delta, s}$ is effective and 
$\coeff_{\Gamma_k}E_M^{\Delta, s}=0$ if and only if $ml=sk$ holds.
In this case, we have $\max_{i}\{\coeff_{\Gamma_i}E_M^{\Delta, s}\}=l(m-s)=s(k-l)$.
\end{example}

\begin{example}\label{Sit2}
Assume that $\Delta$ satisfies the $(\nu1)$-condition such that $|\Delta|=\{P\}$.
Let $E=m_1E_1+m_2E_2$ be a non-zero effective divisor such that $E_1$ and $E_2$ 
are reduced and intersect transversally at a unique point $P=E_1\cap E_2$. 
Let $k:=\deg\Delta$ and $l_j:=\mult_P(\Delta\cap E_j)$. 
By \cite[Lemma 2.12]{N}, we may assume that $l_1=1$.  
By \cite[Lemma 2.14]{N}, we have
\[
E_M^{\Delta, s}=m_1E_{1,M}+m_2E_{2,M}+
\sum_{i=1}^{l_2}(i(m_2-s)+m_1)\Gamma_i+\sum_{i=l_2+1}^k(m_1+l_2m_2-is)\Gamma_i,
\]
where $E_{j,M}$ is the strict transform of $E_j$ in $M$. 
Moreover, the dual graph of $\phi^{-1}(E)$ is the following: 
\begin{center}
    \begin{picture}(220, 90)(0, 0)
    \put(0, 60){\makebox(0, 0){\Large$\oslash$}}
    \put(0, 73){\makebox(0, 0)[b]{$E_{1,M}$}}
    \put(5, 60){\line(1, 0){30}}
    \put(40, 60){\circle*{10}}
    \put(40, 75){\makebox(0, 0)[b]{$\Gamma_{1}$}}
    \put(45, 60){\line(1, 0){20}} 
    \put(67, 60){\line(1, 0){2}}
    \put(71, 60){\line(1, 0){2}}
    \put(75, 60){\line(1, 0){2}}    
    \put(79, 60){\line(1, 0){21}}
    \put(105, 60){\circle*{10}}
    \put(105, 75){\makebox(0, 0)[b]{$\Gamma_{l_2}$}}
    \put(110, 60){\line(1, 0){20}} 
    \put(132, 60){\line(1, 0){2}}
    \put(136, 60){\line(1, 0){2}}
    \put(140, 60){\line(1, 0){2}}    
    \put(144, 60){\line(1, 0){21}}
    \put(170, 60){\circle*{10}}
    \put(170, 75){\makebox(0, 0)[b]{$\Gamma_{k-1}$}}
    \put(175, 60){\line(1, 0){30}}
    \put(210, 60){\circle{10}}
    \put(210, 75){\makebox(0, 0)[b]{$\Gamma_{k}$}}
    \put(105, 55){\line(0, -1){29.6}}
    \put(105, 20){\makebox(0, 0){\Large$\oslash$}}
    \put(128, 13){\makebox(0, 0)[b]{$E_{2,M}$}}
    \end{picture}
\end{center}
Therefore, the divisor $E_M^{\Delta, s}$ is effective and 
$\coeff_{\Gamma_k}E_M^{\Delta, s}=0$ if and only if $m_1l_1+m_2l_2=sk$ holds.
In this case, we have $\max_{i}\{\coeff_{\Gamma_i}E_M^{\Delta, s}\}=
m_1l_1+m_2l_2-sl_2=s(k-l_2)$.
\end{example}

\subsection{Curves in nonsingular surfaces}\label{singcurve_section}

In this section, we fix the notation. 

\begin{notation}\label{sing_nott}
Let $X$ be a nonsingular surface and let 
\[
M=X_k\xrightarrow{\phi_k}X_{k-1}\xrightarrow{\phi_{k-1}}X_{k-2}\to\cdots
\xrightarrow{\phi_1} X_0=X
\]
be a sequence of monoidal transforms such that the morphism 
$\phi_i\colon X_i\to X_{i-1}$ is the blowing up along $P_i\in X_{i-1}$ 
with the exceptional curve $\Gamma_i\subset X_i$. 
Let $\phi\colon M\to X$ be the composition. 
\end{notation}

\begin{definition}
We fix Notation \ref{sing_nott}.
For an effective divisor $C\subset X$, set $m_i:=\mult_{P_i}C_{i-1}$, where 
$C_{i-1}$ is the strict transform of $C$ on $X_{i-1}$. 
We call $(m_1,\dots,m_k)$ the \emph{multiplicity sequence} of $C$ with respects to 
$(\phi_1,\dots,\phi_k)$. 
If we do not need to know the decomposition $\phi=\phi_1\circ\dots\circ\phi_k$, 
then we often call $(m_1,\dots,m_k)$ the \emph{multiplicity sequence} of $C$ with respects to $\phi$.
\end{definition}

The following proposition is important in this article but is easy to prove. 
We omit the proof.

\begin{proposition}\label{sing_prop}
Fix Notation \ref{sing_nott} and assume that $X$ is projective.
\begin{enumerate}
\renewcommand{\theenumi}{\arabic{enumi}}
\renewcommand{\labelenumi}{(\theenumi)}
\item\label{sing_prop1}
Let $C\subset X$ be a reduced and irreducible curve, let $(m_1,\dots,m_k)$ 
be the multiplicity sequence of $C$ with respects to $(\phi_1,\dots,\phi_k)$. 
Let $C_M$ be the strict transform of $C$ on $M$.
\begin{enumerate}
\renewcommand{\theenumii}{\roman{enumii}}
\renewcommand{\labelenumii}{(\theenumii)}
\item\label{sing_prop11}
We have 
$
2p_a(C_M)=2p_a(C)-\sum_{i=1}^km_i(m_i-1),
$
where $p_a(\bullet)$ is the arithmetic genus.
\item\label{sing_prop12}
We have 
$
(K_{M/X}\cdot C_M)=\sum_{i=1}^km_i.
$
We note that if $C$ is nonsingular and $\phi$ is the elimination of $\Delta\subset X$ 
which satisfies the $(\nu1)$-condition, then the left side is equal to 
$\deg(\Delta\cap C)$ by \cite[Lemma 2.7]{N}.
\item\label{sing_prop13}
If $P_i\in\Gamma_{i-1}$, then $m_i\leq m_{i-1}$ holds.
\end{enumerate}
\item\label{sing_prop2}
Let $C_1$, $C_2$ be effective divisors on $X$. Let $(m_{h,1}\dots,m_{h,2})$ be the 
multiplicity sequence of $C_h$ with respects to $(\phi_1,\dots,\phi_k)$ 
for $h=1$, $2$. Let $C_{h, M}$ be the strict transform of $C_h$ on $M$ 
for $h=1$, $2$. Then we have 
$
(C_1\cdot C_2)=(C_{1, M}\cdot C_{2, M})+\sum_{i=1}^km_{1,i}m_{2,i}.
$
In particular, if $C_2$ is a nonsingular curve with $C_2\not\subset C_1$ and 
$\phi$ is the elimination of $\Delta\subset X$ which satisfies the $(\nu1)$-condition, 
then $($by \cite[Lemma 2.7]{N}$)$ there exists a sequence $1\leq i_1<\dots<i_j\leq k$ such that 
\[
m_{2,i}=
\begin{cases}
1 & \text{if }\,\,i=i_t \,\,\text{ for some }\,\,t,\\ 
0 & \text{otherwise,}
\end{cases}
\]
where $j=\deg(\Delta\cap C_2)$, thus we have 
$(C_1\cdot C_2)\geq\sum_{t=1}^jm_{1,i_t}$.
\end{enumerate}
\end{proposition}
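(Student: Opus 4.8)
The plan is to prove every assertion by induction on the number $k$ of monoidal transforms, reducing in each case to a single blow-up. Factor $\phi=\phi_1\circ\psi$, where $\psi\colon M\to X_1$ is the composition of $\phi_2,\dots,\phi_k$; by the very definition of the multiplicity sequence, the strict transform $C_{h,1}$ of $C_h$ on $X_1$ has multiplicity sequence $(m_{h,2},\dots,m_{h,k})$ with respect to $(\phi_2,\dots,\phi_k)$, so the inductive hypothesis applies to $\psi$. For the base step I would use the four standard identities attached to a single blow-up $\pi\colon X_1\to X$ at $P$ with exceptional curve $e=\Gamma_1$: namely $K_{X_1/X}=e$, the relation $\pi^*D=D_1+(\mult_PD)\,e$ for the strict transform $D_1$ of an effective divisor $D$, the equality $(e^2)=-1$, and the resulting $(e\cdot D_1)=\mult_PD$.

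For \eqref{sing_prop12} the base case is $(K_{X_1/X}\cdot C_1)=(e\cdot C_1)=m_1$; for the inductive step I would write $K_{M/X}=K_{M/X_1}+\psi^*\Gamma_1$, obtained from $\phi_1^*K_X=K_{X_1}-\Gamma_1$, and apply the projection formula to get $(\psi^*\Gamma_1\cdot C_M)=(\Gamma_1\cdot C_1)=m_1$, adding the inductive value $\sum_{i=2}^k m_i$. For \eqref{sing_prop11}, the single-blow-up formula $2p_a(C_1)=2p_a(C)-m_1(m_1-1)$ follows from adjunction on $X_1$ together with $(C_1^2)=(C^2)-m_1^2$ and $(K_{X_1}\cdot C_1)=(K_X\cdot C)+m_1$, and the general case then telescopes. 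Part \eqref{sing_prop2} is likewise a one-line computation in the base case: expanding $(C_1\cdot C_2)=(\pi^*C_1\cdot\pi^*C_2)=(C_{1,1}+m_{1,1}e)\cdot(C_{2,1}+m_{2,1}e)$ and using the identities above yields $(C_{1,1}\cdot C_{2,1})+m_{1,1}m_{2,1}$, after which induction over $\psi$ finishes the claim.

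The one assertion of a different flavour is \eqref{sing_prop13}, which I would prove directly rather than through the blow-up bookkeeping. Since $C_{i-1}$ is a strict transform it does not contain $\Gamma_{i-1}$, so the two curves meet properly and their local intersection numbers sum to $(C_{i-1}\cdot\Gamma_{i-1})=m_{i-1}$. As $\Gamma_{i-1}$ is smooth at $P_i$, the local intersection multiplicity at $P_i$ is at least $\mult_{P_i}C_{i-1}\cdot\mult_{P_i}\Gamma_{i-1}=m_i$, and comparing with the total gives $m_i\leq m_{i-1}$.

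Finally, the two ``in particular'' remarks are immediate once the $(\nu1)$-elimination hypothesis is in force: by \cite[Lemma 2.7]{N} the multiplicity sequence of a nonsingular curve consists only of $0$'s and $1$'s, with exactly $\deg(\Delta\cap C)$ ones. This identifies $\sum_i m_i$ with $\deg(\Delta\cap C)$ in \eqref{sing_prop12} and reduces $\sum_i m_{1,i}m_{2,i}$ to $\sum_{t}m_{1,i_t}$ in \eqref{sing_prop2}; the inequality then follows because $C_{2,M}$ and $C_{1,M}$ share no component when $C_2\not\subset C_1$, so $(C_{1,M}\cdot C_{2,M})\geq 0$. Consistent with the paper's remark that the result is easy, I expect no genuine obstacle; the only step requiring a little care is \eqref{sing_prop13}, since it is the sole place where one leaves the formalism of intersection numbers of divisors and invokes the properness of the intersection together with the local-multiplicity inequality.
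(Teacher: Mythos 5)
Your proof is correct, and all steps check out: the single blow-up identities ($K_{X_1/X}=e$, $\pi^*D=D_1+(\mult_PD)e$, $(e^2)=-1$, $(e\cdot D_1)=\mult_PD$) plus induction handle (1)(i), (1)(ii) and (2), and the proper-intersection argument with the local bound $I_{P_i}(C_{i-1},\Gamma_{i-1})\geq\mult_{P_i}C_{i-1}$ correctly yields (1)(iii). Note that the paper deliberately omits the proof of this proposition (``is easy to prove. We omit the proof.''), so there is nothing to compare against; your argument is the standard one the author evidently had in mind and can stand as the omitted proof.
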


\begin{corollary}\label{twocurve_cor}
Let $X$ be a nonsingular complete surface, $\Delta$ be a zero-dimensional 
subscheme of $X$ which satisfies the $(\nu1)$-condition, $\phi\colon M\to X$ 
be the elimination of $\Delta$ and $C_1$, $C_2$ be distinct nonsingular curves on $X$. 
We set $k:=\deg\Delta$ and $k_h:=\deg(\Delta\cap C_h)$ for $h=1$, $2$. 
Then $(C_1\cdot C_2)\geq k_1+k_2-k$ holds.
\end{corollary}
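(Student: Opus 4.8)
The plan is to reduce everything to the intersection formula in Proposition \ref{sing_prop}\ref{sing_prop2} and then to finish by a short combinatorial argument. Writing $\phi$ as a sequence of monoidal transforms $M=X_k\to\cdots\to X_0=X$ as in Notation \ref{sing_nott}, I would let $(m_{h,1},\dots,m_{h,k})$ be the multiplicity sequence of $C_h$ with respect to $(\phi_1,\dots,\phi_k)$, and let $C_{h,M}$ denote its strict transform on $M$, for $h=1,2$. Since $C_1\neq C_2$ are distinct nonsingular (hence irreducible) curves, neither is contained in the other, so Proposition \ref{sing_prop}\ref{sing_prop2} applies and gives
\[
(C_1\cdot C_2)=(C_{1,M}\cdot C_{2,M})+\sum_{i=1}^k m_{1,i}m_{2,i}.
\]

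The key point is that each multiplicity sequence has a very special shape. Because each $C_h$ is nonsingular and $\phi$ is the elimination of a subscheme $\Delta$ satisfying the $(\nu1)$-condition, the ``in particular'' clause of Proposition \ref{sing_prop}\ref{sing_prop2} (applied with $C_h$ playing the role of the nonsingular curve, using \cite[Lemma 2.7]{N}) shows that $m_{h,i}\in\{0,1\}$ for every $i$, and that exactly $k_h=\deg(\Delta\cap C_h)$ of the entries equal $1$. Setting $I_h:=\{\,i : m_{h,i}=1\,\}\subseteq\{1,\dots,k\}$, we then have $|I_h|=k_h$ and
\[
\sum_{i=1}^k m_{1,i}m_{2,i}=|I_1\cap I_2|.
\]

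Now I would simply invoke inclusion--exclusion together with $I_1\cup I_2\subseteq\{1,\dots,k\}$, whence $|I_1\cup I_2|\leq k=\deg\Delta$, to obtain
\[
|I_1\cap I_2|=|I_1|+|I_2|-|I_1\cup I_2|\geq k_1+k_2-k.
\]
Finally, since $C_{1,M}$ and $C_{2,M}$ are distinct irreducible curves on the nonsingular surface $M$, their intersection number is nonnegative, i.e.\ $(C_{1,M}\cdot C_{2,M})\geq 0$. Combining the displayed identity with these two estimates yields $(C_1\cdot C_2)\geq k_1+k_2-k$, as desired.

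I do not expect a genuine obstacle here: once Proposition \ref{sing_prop} is available, the statement is a brief combinatorial consequence. The only point that must be checked with care is that the multiplicity sequences of \emph{both} nonsingular curves really take values in $\{0,1\}$ with precisely $k_h$ ones each---this is exactly the content of the cited \cite[Lemma 2.7]{N} as recorded in Proposition \ref{sing_prop}\ref{sing_prop2}---after which the bound $|I_1\cap I_2|\geq k_1+k_2-k$ and the nonnegativity of $(C_{1,M}\cdot C_{2,M})$ close the argument.
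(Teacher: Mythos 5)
Your proposal is correct and follows essentially the same route as the paper: both apply Proposition \ref{sing_prop} \eqref{sing_prop2} to get the intersection formula together with the fact that each multiplicity sequence consists of $k_h$ ones and zeros elsewhere, then conclude from $(C_{1,M}\cdot C_{2,M})\geq 0$ and the estimate $\sum_{i}m_{1,i}m_{2,i}\geq k_1+k_2-k$. The only difference is cosmetic: you make the last inequality explicit via index sets and inclusion--exclusion, which the paper leaves to the reader.
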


\begin{proof}
Let $(m_{h,1},\dots,m_{h,k})$ be the multiplicity sequence of $C_h$ with respect to $\phi$ 
for $h=1$, $2$. 
Then $m_{h,i}\in\{0$, $1\}$, $\sum_{i=1}^km_{h,i}=k_h$ and 
$(C_1\cdot C_2)=(C_{1,M}\cdot C_{2,M})+\sum_{i=1}^km_{1,i}m_{2,i}$ holds by 
Proposition \ref{sing_prop}, where $C_{h,M}$ is the strict transform of $C_h$ on $M$.
Thus the assertion holds since $(C_{1,M}\cdot C_{2,M})\geq 0$ and 
$\sum_{i=1}^km_{1,i}m_{2,i}\geq k_1+k_2-k$.
\end{proof}

\begin{proposition}\label{toric_prop}
Let $P\in\F_n$ and let $l_P$ be the fiber passing through $P$. 
Let $D$ be an effective divisor on $\F_n$ such that $D\in|s\sigma+tl|$. 
\begin{enumerate}
\renewcommand{\theenumi}{\arabic{enumi}}
\renewcommand{\labelenumi}{(\theenumi)}
\item\label{toric_prop1}
If $n\geq 1$ and $P\not\in\sigma$, then $\mult_PD\leq t$. 
\item\label{toric_prop2}
We have $\mult_PD\leq s+\coeff_{l_P}D$. 
\end{enumerate}
\end{proposition}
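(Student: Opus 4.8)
The plan is to deduce both inequalities from the elementary principle that, for an effective divisor $D$ and a nonsingular curve $C$ with $P\in C$ and $C\not\subset\Supp D$, one has $\mult_PD\leq(D\cdot C)$. This follows at once from Proposition \ref{sing_prop}(\ref{sing_prop2}): blowing up $P$ once, the multiplicity sequences of $D$ and $C$ are $(\mult_PD)$ and $(\mult_PC)=(1)$, so $(D\cdot C)=(D_Y\cdot C_Y)+\mult_PD\geq\mult_PD$, the strict transforms meeting nonnegatively because $C\not\subset\Supp D$. Thus the whole problem reduces to choosing, for each part, a suitable nonsingular curve through $P$ that is not a component of $D$, and computing its intersection number with $s\sigma+tl$.

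For \ref{toric_prop2} I would take $C=l_P$, the fiber through $P$. Writing $c:=\coeff_{l_P}D$, the divisor $D':=D-c\,l_P$ is effective, does not contain $l_P$, and lies in $|s\sigma+(t-c)l|$. Since $\mult_P$ is additive on effective divisors and $\mult_Pl_P=1$, we get $\mult_PD=c+\mult_PD'$. Applying the principle above to $D'$ and $l_P$ gives $\mult_PD'\leq(D'\cdot l_P)=(s\sigma+(t-c)l)\cdot l=s$, whence $\mult_PD\leq s+\coeff_{l_P}D$. This part is immediate and uses no hypothesis on $n$.

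For \ref{toric_prop1} the natural choice is a section at infinity $\sigma_\infty\sim\sigma+nl$ passing through $P$; since $(s\sigma+tl)\cdot(\sigma+nl)=t$, any such $\sigma_\infty$ with $\sigma_\infty\not\subset\Supp D$ yields $\mult_PD\leq t$. The point that needs care is producing such a curve, and this is the main obstacle. Here I would use that $\sigma_\infty$ moves: a direct computation via the projection to $\pr^1$ gives $h^0(\F_n,\sigma+nl)=n+2$, so $|\sigma+nl|\cong\pr^{n+1}$, and the members containing $\sigma$ form the sublocus $\{\sigma+F\mid F\in|nl|\}\cong\pr^n$. The hypotheses $n\geq 1$ and $P\notin\sigma$ are exactly what make the family usable: imposing passage through $P$ cuts the system down to dimension $n$, while the members through $P$ containing $\sigma$ (which must then contain the fiber $l_P$) form only an $(n-1)$-dimensional subfamily, so the irreducible sections at infinity through $P$ sweep out a family of dimension $n\geq 1$. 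As $\Bbbk$ is infinite and $D$ has only finitely many components, we may choose one such $\sigma_\infty$ with $\sigma_\infty\not\subset\Supp D$, and the principle applies.

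I expect the only genuine subtlety to be the last step: checking that a member of $|\sigma+nl|$ not containing $\sigma$ is indeed an irreducible section disjoint from $\sigma$ (so that $\sigma_\infty$ is a legitimate nonsingular curve to intersect against). This follows because any $C\in|\sigma+nl|$ with $\sigma\not\subset C$ satisfies $(C\cdot\sigma)=0$; since $\sigma$ is the unique irreducible curve of negative self-intersection on $\F_n$ for $n\geq 1$, this forces $C$ to be disjoint from $\sigma$ and then, by the same numerical bookkeeping on the class $\sigma+nl$, to be a single irreducible section. Everything else is a routine intersection-number computation on $\F_n$.
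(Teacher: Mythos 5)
Your proof is correct and follows essentially the same route as the paper: for part (2) you subtract $(\coeff_{l_P}D)\,l_P$ and intersect the remainder with the fiber, and for part (1) you intersect $D$ with a section at infinity through $P$ not contained in $\Supp D$, exactly as the paper does. The only difference is that the paper simply asserts the existence of such a $\sigma_\infty$ with $P\in\sigma_\infty$ and $\sigma_\infty\not\leq D$, whereas you supply the (correct) dimension count in $|\sigma+nl|$ that justifies it.
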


\begin{proof}
\eqref{toric_prop1}
There exists a section at infinity $\sigma_\infty$ such that $P\in\sigma_\infty$ and 
$\sigma_\infty\not\leq D$. Then $\mult_PD\leq(D\cdot\sigma_\infty)=t$ holds. 

\eqref{toric_prop2}
Let $D':=D-(\coeff_{l_P}D)D$. Then $\mult_PD'\leq(D'\cdot l_P)=s$. 
On the other hand, $\mult_PD'=\mult_PD-\coeff_{l_P}D$ holds. 
Thus $\mult_PD\leq s+\coeff_{l_P}D$ holds. 
\end{proof}

\subsection{On base point freeness for surfaces}

In this section, we prepare to prove Proposition \ref{dP-basic_prop}.

\begin{proposition}\label{tanaka_method}
Let $M$ be a nonsingular projective surface, $a$, $b$ be positive integers, 
$E$ be an effective divisor on $M$ such that $|E|$ is simple normal crossing and 
$\coeff E\subset\{1,\dots,a-1\}$, and $L$ be a nef and big divisor on $M$ such that 
$bL\sim-aK_M-E$.
Then there exists a projective and birational morphism $\alpha\colon M\to S$ 
onto a normal projective surface $S$ and an ample Cartier divisor $L_S$ on $S$ 
such that $L\sim\alpha^*L_S$ and $bL_S\sim-aK_S-\alpha_*E$ holds.
\end{proposition}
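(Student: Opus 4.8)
The plan is to realize $\alpha$ as the morphism associated with the semiample divisor $L$, so the crux is to prove that the given nef and big divisor $L$ is actually semiample. First I would set $\Delta:=\frac{1}{a}E$. Since $|E|$ is simple normal crossing and $\coeff E\subset\{1,\dots,a-1\}$, the pair $(M,\Delta)$ is log smooth with $\lfloor\Delta\rfloor=0$, hence klt. From $bL\sim-aK_M-E$ we obtain $a(K_M+\Delta)=aK_M+E\sim-bL$, so that $K_M+\Delta\sim_{\Q}-\frac{b}{a}L$ and
\[
L-(K_M+\Delta)\sim_{\Q}\frac{a+b}{a}L,
\]
which is nef and big because $L$ is. Thus $L$ is a nef Cartier divisor on the nonsingular surface $M$ for which $L-(K_M+\Delta)$ is nef and big with $(M,\Delta)$ klt.

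At this point I would invoke the base point free theorem for klt surfaces valid over an algebraically closed field of arbitrary characteristic (this is the ``Tanaka method'' the name of the proposition refers to): in characteristic zero it is the classical Kawamata--Shokurov theorem, and in positive characteristic one appeals to Tanaka's version for surfaces. This yields that $L$ is semiample. I expect this step to be the only substantive point of the whole argument, since it is exactly where positive characteristic enters and where the vanishing theorems that are routine in characteristic zero must be replaced; everything that follows is formal.

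Granting semiampleness, I would take $m\gg0$ so that $|mL|$ is base point free and let $\alpha\colon M\to S$ be the Stein factorization of the induced morphism, so that $S$ is a normal projective surface with $\alpha_*\sO_M=\sO_S$. As $L$ is big, $\alpha$ is birational, and the contracted curves are exactly the $C$ with $(L\cdot C)=0$; this locus does not depend on $m$, so a single $S$ serves all large $m$. For each large $m$ one has $mL\sim\alpha^*H_m$ with $H_m$ an ample Cartier divisor on $S$ (the pullback of a hyperplane class under the finite part of the Stein factorization). Setting $L_S:=H_{m+1}-H_m$ produces a Cartier divisor with $\alpha^*L_S\sim L$. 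Because $\alpha_*\sO_M=\sO_S$, the map $\alpha^*\colon\Pic(S)\to\Pic(M)$ is injective by the projection formula, so $\alpha^*(mL_S)\sim mL\sim\alpha^*H_m$ forces $mL_S\sim H_m$; since $H_m$ is ample, $L_S$ is ample.

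Finally I would descend the defining relation by push-forward. Pushing $bL\sim-aK_M-E$ forward along the birational morphism $\alpha$ and using $\alpha_*L\sim L_S$ together with $\alpha_*K_M=K_S$ gives
\[
bL_S\sim-aK_S-\alpha_*E
\]
as Weil divisors on $S$; as the left-hand side is Cartier, so is $-aK_S-\alpha_*E$. This establishes all three required assertions. The genuine obstacle is the characteristic-free base point free theorem used in the second step; the descent of $L$ to the ample Cartier divisor $L_S$ and the push-forward of the linear equivalence are routine.
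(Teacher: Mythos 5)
Your proposal is correct and follows essentially the same route as the paper: form the klt pair $(M,\tfrac{1}{a}E)$, observe that $L-(K_M+\tfrac{1}{a}E)\sim_{\Q}\tfrac{a+b}{a}L$ is nef and big, apply the base point free theorem (Kawamata--Shokurov in characteristic zero, Tanaka's surface version in positive characteristic) to get semiampleness, define $\alpha$ via $|mL|$ for $m\gg 0$, take $L_S$ as the difference of the ample Cartier divisors coming from $mL$ and $(m+1)L$, and push the relation forward. Your write-up is in fact slightly more careful than the paper's at two points (the injectivity of $\alpha^*$ on $\Pic$ to conclude ampleness of $L_S$, and the independence of $S$ from $m$), but these are exactly the details the paper leaves implicit.
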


\begin{proof}
The pair $(M, (1/a)E)$ is a klt pair (see \cite[\S 2.3]{KoMo}). 
Moreover, $L-(K_M+(1/a)E)\sim_\Q(1+(b/a))L$ is a nef and big $\Q$-divisor on $M$.
Therefore there exists a positive integer $m_0$ such that for any $m\geq m_0$ 
the complete linear system $|mL|$ is base point free by the base point free theorem 
(see \cite[Theorem 3.3]{KoMo} and \cite[Theorem 0.4]{tanaka}).
Let $\alpha\colon M\to S$ be the morphism corresponds to $|mL|$ for sufficiently 
large $m$. Since $L$ is big, the morphism $\alpha$ is birational. 
Both $mL$ and $(m+1)L$ are pullbacks of ample Cartier divisors on $S$. 
Their difference $L_S$ is an ample Cartier divisor on $S$ 
such that $\alpha^*L_S\sim L$. Since $\alpha^*(bL_S)\sim-aK_M-E$, 
we have $bL_S\sim-aK_S-\alpha_*E$. 
\end{proof}

\section{Log del Pezzo surfaces}\label{dP_section}

In this section, we define the \emph{multi-index} of log del Pezzo surfaces, 
\emph{$(a,b)$-basic pairs}, and \emph{$(a,b)$-fundamental multiplets}. 
Then we see how they relate.

\subsection{Definition and properties of log del Pezzo surfaces}

\begin{definition}\label{dP_dfn}
\begin{enumerate}
\renewcommand{\theenumi}{\arabic{enumi}}
\renewcommand{\labelenumi}{(\theenumi)}
\item\label{dP_dfn1}
A normal projective surface $S$ is called a \emph{log del Pezzo surface} 
if $S$ is log-terminal and the anti-canonical divisor $-K_S$ is an ample $\Q$-Cartier 
divisor.
\item\label{dP_dfn2}
Let $S$ be a log del Pezzo surface. Set 
\[
r(S):=\sup\{r\in\Q_{>0} \, | \, -K_S\equiv rL \,\text{ for some Cartier divisor }\,
L\}.
\]
We call $r(S)$ the \emph{fractional index} of $S$. 
A Cartier divisor $L_S$ on $S$ such that $-K_S\equiv r(S)L_S$ is called a 
\emph{fundamental Cartier divisor} of $S$.
\end{enumerate}
\end{definition}

\begin{remark}\label{dP_rmk}
Any log del Pezzo surface is a rational surface by \cite[Proposition 3.6]{N}. 
In particular, the Picard group $\Pic(S)$ of $S$ is a finitely generated and torsion-free 
Abelian group. Thus $r(S)$ is $\Q$-valued, 
a fundamental Cartier divisor $L_S$ of a log del Pezzo surface $S$ is 
unique up to linearly equivalence and $-K_S\sim_\Q r(S)L_S$.
\end{remark}

\begin{definition}\label{multiind_dfn}
Let $S$ be a log del Pezzo surface, 
$a$, $b$ be positive integers and $L_S$ be the fundamental Cartier divisor of $S$.
We say that $(a, b)$ is a \emph{multi-index} of $S$ if $-aK_S\sim bL_S$ holds. 
Thus if $S$ has multi-index $(a,b)$, then $b/a=r(S)$ holds.
We say that $(a, b)$ is the \emph{normalized multi-index} of a log del Pezzo surface $S$ 
if $(a, b)$ is a multi-index of $S$ and for any multi-index $(a', b')$ of $S$ 
we have $a\leq a'$.
\end{definition}

\begin{remark}\label{multiind_rmk}
Let $S$ be a log del Pezzo surface and $L_S$ be the fundamental Cartier divisor of $S$. 
\begin{enumerate}
\renewcommand{\theenumi}{\arabic{enumi}}
\renewcommand{\labelenumi}{(\theenumi)}
\item\label{multiind_rmk1}
Let $(a, b)$ be the normalized multi-index of $S$. 
Then for any multi-index $(a', b')$ of $S$, there exists a positive integer $t$ such that 
$a'=ta$ and $b'=tb$ hold.
\item\label{multiind_rmk2}
Let $a_0$, $b_0$ be the positive integers such that 
$r(S)=b_0/a_0$ and $\gcd(a_0, b_0)=1$. 
Then $-a_0K_S\sim b_0L_S$ does \emph{not} hold in general despite 
$-a_0K_S\sim_\Q b_0L_S$. 
(See Corollary \ref{235_cor}.)
\end{enumerate}
\end{remark}

We consider the case $r(S)\geq 1$. 

\begin{proposition}\label{big_prop}
Let $S$ be a log del Pezzo surface. 
\begin{enumerate}
\renewcommand{\theenumi}{\arabic{enumi}}
\renewcommand{\labelenumi}{(\theenumi)}
\item\label{big_prop1}\cite{FJT}
If $r(S)>1$, then $S$ is isomorphic to one of the following: 
\begin{itemize}
\item
$\pr^2$ \,\, $(r(S)=3)$,
\item
$\pr^1\times\pr^1$ \,\, $(r(S)=2)$,
\item
$\pr(1,1,n)$ \,\,$(n\geq 2)$\,\, $(r(S)=(n+2)/n)$.
\end{itemize}
\item\label{big_prop2}
If $r(S)=1$, then $S$ has at most du Val singularities and such $S$ has been classified 
$($see for example \cite{brenton, demazure, HW}$)$.
\end{enumerate}
\end{proposition}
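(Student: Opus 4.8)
The two parts are of quite different depth, and I would treat them separately.

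For part \eqref{big_prop2} the point is that $r(S)=1$ forces $-K_S$ to be Cartier. Indeed $r(S)=1$ means $-K_S\equiv L_S$ for the fundamental Cartier divisor $L_S$, and since $S$ is a rational surface the group $\Pic(S)$ is finitely generated and torsion-free with trivial $\Pic^0$, so numerical and linear equivalence of Cartier divisors coincide (Remark \ref{dP_rmk}); hence $-K_S\sim L_S$ and $K_S$ is Cartier. A log-terminal surface whose canonical class is Cartier has all discrepancies in $\Z\cap(-1,\infty)=\Z_{\ge 0}$, that is, $S$ is canonical, and canonical surface singularities are precisely the du Val (rational double) points. Thus $S$ is a Gorenstein du Val del Pezzo surface, and I would then simply invoke the classical classification of such surfaces in \cite{brenton, demazure, HW}.

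For part \eqref{big_prop1} I would run the two-dimensional minimal model program, which is available over an arbitrary field. Since $-K_S$ is ample, the cone theorem for klt surfaces presents $\NE(S)$ as the convex hull of finitely many $K_S$-negative extremal rays, each spanned by a rational curve $\ell$ with $0<(-K_S\cdot\ell)\le 3$. Writing $e:=(L_S\cdot\ell)$, a positive integer because $L_S$ is ample and Cartier, the relation $-K_S\equiv r(S)L_S$ gives $r(S)\,e=(-K_S\cdot\ell)\le 3$; in particular $r(S)\le 3$, with equality forcing $e=1$. I would then analyze the contraction $\cont_R\colon S\to Y$ of a chosen extremal ray $R=\R_{\ge 0}[\ell]$ according to $\dim Y$. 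If $\dim Y=0$ then $\rho(S)=1$; if $\dim Y=1$ then $Y\cong\pr^1$ and $\cont_R$ is a Mori fibration whose general fibre $F$ satisfies $(-K_S\cdot F)=2$, whence $r(S)\,(L_S\cdot F)=2$ and the hypothesis $r(S)>1$ pins down $r(S)=2$ and $(L_S\cdot F)=1$.

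In the fibration case I expect $S\cong\pr^1\times\pr^1$, after checking that the section/fibre numerics exclude the other Hirzebruch surfaces and the singular ruled cones (which either have fractional index $1$ or fail to be del Pezzo). In the $\rho(S)=1$ case I would identify $S$ through its minimal resolution: the condition $r(S)>1$ severely restricts the exceptional configuration over the unique singular point, leaving only the smooth case $\pr^2$ and the single cyclic quotient giving the cone $\pr(1,1,n)$ over the rational normal curve of degree $n$, for which the index computation $r=(n+2)/n$ is checked directly. \textbf{The main obstacle} is absorbing the birational extremal contractions: a priori an extremal ray could contract an irreducible curve $\ell$ with $\ell^2<0$ to a point of $Y$, and one must show this is incompatible with $r(S)>1$ (equivalently, that running the MMP never lowers the fractional index below the value on the final model), so that $S$ is forced to have Picard number $1$ or to be the product. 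I would attack this by tracking how $L_S$ and the discrepancy $a(\ell,Y)>0$ behave under the contraction. As a cleaner alternative I would consider the polarized pair $(S,L_S)$: since $-(K_S+L_S)\equiv(r(S)-1)L_S$ is ample, the sectional genus $1+\tfrac12\,(K_S+L_S)\cdot L_S$ vanishes, so one may instead quote the classification of normal polarized surfaces of sectional genus zero and extract those whose anticanonical class is ample.
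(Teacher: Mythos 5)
Your treatment of part \eqref{big_prop2} is correct, and in fact more direct than the paper's argument: the paper reaches ``$S$ is canonical'' by showing that $K_M+L_M$ must be nef on the minimal resolution (ruling out the alternative $M\cong\pr^2,\F_n$ by a separate contradiction), whence $E_M=-aK_{M/S}=0$; you instead get $-K_S\sim L_S$ Cartier directly from Remark \ref{dP_rmk} and deduce canonicity from integrality of discrepancies. Both routes then invoke the identification of canonical surface singularities with du Val singularities and the classical classification, which is all the statement asks for.

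Part \eqref{big_prop1}, however, has genuine gaps. First, the numerical input you quote is not what the cone theorem provides: for a klt surface, \cite[Theorem 3.7]{KoMo} produces rational curves of length at most $2\dim S=4$, not $3$; the bound $3$ is in fact true, but it needs a proof (e.g.\ via the minimal resolution), which you do not supply. Second, and more seriously, the steps that carry the actual content of the classification are left open: the exclusion of birational extremal rays is flagged as ``the main obstacle'' with only a plan of attack, the fibration case ends with ``I expect $S\cong\pr^1\times\pr^1$,'' and in the $\rho(S)=1$ case you assert that $r(S)>1$ ``leaves only'' $\pr^2$ and $\pr(1,1,n)$ --- but that assertion \emph{is} the theorem, and no mechanism is offered for it. For the birational rays there is a clean fix inside your framework: if $C$ spans a birational ray, its strict transform $C_M$ on the minimal resolution is not $\mu$-exceptional and satisfies $C_M^2\le C^2<0$, so adjunction gives $-K_S\cdot C\le -K_M\cdot C_M\le C_M^2+2\le 1$, contradicting $-K_S\cdot C=r(S)(L_S\cdot C)>1$; but nothing similar rescues the rank-one case, where you would ultimately have to redo Fujita's or Nakayama's analysis. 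The paper's proof sidesteps singular MMP entirely: it works on the minimal resolution $M$ with $L_M=\alpha^*L_S$, observes that $b>a$ forces $(K_M+L_M\cdot L_M)<0$, and that any $(-1)$-curve $\gamma$ with $(K_M+L_M\cdot\gamma)<0$ would have $(L_M\cdot\gamma)=0$ and hence be $\alpha$-exceptional, which is impossible for a minimal resolution; Mori's theorem for \emph{smooth} surfaces then yields $M\cong\pr^2$ or $\F_n$, and $S$ is read off by contracting the $L_M$-trivial curves (at most the minimal section), giving exactly the three families. Finally, your ``cleaner alternative'' via the sectional/$\Delta$-genus zero classification is legitimate, but it amounts to citing \cite{FJT}, i.e.\ to the attribution in the statement rather than to a proof.
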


\begin{proof}
Let $S$ be a log del Pezzo surface with $r(S)\geq 1$, $L_S$ be a fundamental 
Cartier divisor of $S$. Choose positive integers $a$, $b$ such that $(a, b)$ is a 
multi-index of $S$. By assumption, $b\geq a$ holds. 
Let $\alpha\colon M\to S$ be the minimal resolution of $S$, $L_M:=\alpha^*L_S$ and 
let $E_M:=-aK_{M/S}$. 
Note that $E_M$ is a $\Z$-divisor and 
$-aK_M\sim bL_M+E_M$ holds. 
Since $\alpha$ is the minimal resolution, the divisor $E_M$ is effective 
(see \cite[Corollary 4.3]{KoMo}).
We remark that $a(K_M+L_M)\sim-E_M-(b-a)L_M$. 
Therefore, if $K_M+L_M$ is nef, then $E_M=0$ and $a=b$ hold.

Assume that $K_M+L_M$ is not nef. 
If there exists a $(-1)$-curve $\gamma$ on $M$ such that 
$(K_M+L_M\cdot \gamma)<0$, then $(L_M\cdot\gamma)=0$. However, this implies that 
$\gamma$ is $\alpha$-exceptional. This leads to a contradiction since $\alpha$ is 
the minimal resolution. 
Hence $M\simeq\pr^2$ or $\F_n$ by \cite[Theorem 2.1]{mori} and the fact $M$ is a 
nonsingular rational surface. 

\eqref{big_prop1}
Since $b>a$, $K_M+L_M$ is not nef. Thus $M\simeq\pr^2$ or $\F_n$. 
Thus the assertion follows immediately.

\eqref{big_prop2}
If $K_M+L_M$ is not nef, then $M\simeq\pr^2$ or $\F_n$. Thus $S\simeq\F_1$ 
since $r(S)=1$. However, in this case, $-K_M\sim L_M$ holds since $\alpha$ is an 
isomorphism. Thus $K_M+L_M$ is nef, a contradiction. 
Hence $K_M+L_M$ is nef. In particular, $E_M=0$. 
This implies that $S$ has canonical singularities. Therefore $S$ has at most 
du Val singularities by \cite[Theorem 4.20]{KoMo}.
\end{proof}

\subsection{$(a, b)$-basic pairs}\label{basic_section}

We are interested in log del Pezzo surfaces $S$ with $r(S)<1$, especially with 
$r(S)\in[1/2, 1)$. 
It is convenient that considering its minimal resolution and the divisor defining 
the minimal resolution in order to treat $S$. 
We introduce the following notion which is a kind of 
modification of the notion of basic pairs in the sense of Nakayama \cite[\S3]{N}.

\begin{definition}\label{basic_dfn}
Let $a$, $b$ be positive integers with $1/2\leq b/a<1$. 
A pair $(M, E)$ is called an \emph{$(a, b)$-basic pair} if the following holds: 
\begin{enumerate}
\renewcommand{\theenumi}{\arabic{enumi}}
\renewcommand{\labelenumi}{(\theenumi)}
\item\label{basic_dfn1}
$M$ is a nonsingular projective rational surface such that 
$M$ is not isomorphic to neither $\pr^2$ nor $\F_n$.
\item\label{basic_dfn2}
$E$ is a nonzero effective divisor on $M$ such that $\coeff E\subset\{1,\dots,a-1\}$ 
and $|E|$ is simple normal crossing.
\item\label{basic_dfn3}
There exists a divisor $L$ on $M$ such that $bL\sim -aK_M-E$. 
\item\label{basic_dfn4}
For any irreducible component $E_0\leq E$, we have $(L\cdot E_0)=0$.
\item\label{basic_dfn5}
$K_M+L$ is nef and $(K_M+L\cdot L)>0$ hold.
\end{enumerate}
The divisor $L$ is unique up to linearly equivalence. 
We call $L$ the \emph{fundamental divisor} of $(M, E)$.

An $(a, b)$-basic pair $(M, E)$ is called a \emph{normalized $(a, b)$-basic pair} 
if for any $t\geq 2$ with $t\mid a$ and $t\mid b$ there exists 
an irreducible component $E_0\leq E$ such that $\coeff_{E_0}E$ is not divisible by $t$.
\end{definition}

Now we see the correspondence between log del Pezzo surfaces of the normalized 
multi-index $(a, b)$ and normalized $(a, b)$-basic pairs.

\begin{proposition}\label{dP-basic_prop}
Let $a$, $b$ be positive integers with $1/2\leq b/a<1$.
\begin{enumerate}
\renewcommand{\theenumi}{\arabic{enumi}}
\renewcommand{\labelenumi}{(\theenumi)}
\item\label{dP-basic_prop1}
Let $S$ be a log del Pezzo surface of a multi-index $(a, b)$ and let $L_S$ be the 
fundamental Cartier divisor of $S$. Let $\alpha\colon M\to S$ be the minimal resolution 
of $S$ and let $E_M:=-aK_{M/S}$. 
Then the pair $(M, E_M)$ is an $(a, b)$-basic pair and the divisor $\alpha^*L_S$ is the 
fundamental divisor of $(M, E_M)$. 
If $(a, b)$ is the normalized multi-index of $S$, then $(M, E_M)$ is a normalized 
$(a, b)$-basic pair.  
\item\label{dP-basic_prop2}
Let $(M, E)$ be an $(a, b)$-basic pair and $L$ be the fundamental divisor of $(M, E)$.
Then there exists a projective and birational morphism $\alpha\colon M\to S$ such that 
$S$ is a log del Pezzo surface of a multi-index $(a, b)$ and $L\sim\alpha^*L_S$ hold, 
where $L_S$ is the fundamental Cartier divisor of $S$. 
Moreover, the morphism $\alpha$ is the minimal resolution of $S$. 
If $(M, E)$ is a normalized $(a, b)$-basic pair, then $(a, b)$ is the normalized multi-index 
of the $S$.
\item\label{dP-basic_prop3}
Let $a_i$, $b_i$ be positive integers with $1/2\leq b_i/a_i<1$
and $(M_i, E_i)$ be an $(a_i, b_i)$-basic pair 
for any $i=1$, $2$. 
Two pairs defines same log del Pezzo surface $S$ in the sense of \eqref{dP-basic_prop2} 
if and only if $M_1\simeq M_2$ and 
there exists positive integers $t_1$, $t_2$ such that 
$t_1a_1=t_2a_2$, $t_1b_1=t_2b_2$ and $t_1E_1=t_2E_2$ under the isomorphism.
\end{enumerate}
\end{proposition}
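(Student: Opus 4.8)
The plan is to treat the three parts in order, using Proposition~\ref{tanaka_method} as the engine for part~\eqref{dP-basic_prop2} and the elementary intersection theory of the minimal resolution for part~\eqref{dP-basic_prop1}. For part~\eqref{dP-basic_prop1}, let me start from a log del Pezzo surface $S$ of multi-index $(a,b)$ with minimal resolution $\alpha\colon M\to S$. Set $E_M:=-aK_{M/S}$ and $L_M:=\alpha^*L_S$. I would verify the five defining conditions of an $(a,b)$-basic pair one by one. Condition~\eqref{basic_dfn1} (that $M\not\simeq\pr^2,\F_n$) follows because $r(S)<1$ forces $S$ to be genuinely singular, as the smooth cases all have $r\geq 1$ by Proposition~\ref{big_prop}. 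For the relation $bL_M\sim-aK_M-E_M$ in~\eqref{basic_dfn3}, I apply $\alpha^*$ to $-aK_S\sim bL_S$ and use $aK_M=\alpha^*(aK_S)+aK_{M/S}$, which gives $-aK_M-E_M\sim\alpha^*(-aK_S)\sim b\alpha^*L_S=bL_M$. Effectiveness of $E_M$, and the bound $\coeff E_M\subset\{1,\dots,a-1\}$, come from log-terminality: the discrepancies lie in $(-1,0]$, so $aK_{M/S}$ has coefficients in $(-a,0]$, and since $\alpha$ is the minimal resolution no coefficient is $0$ (and none can be an integer multiple of $a$ because $-K_S$ is not Cartier when $r(S)<1$); the simple-normal-crossing and pairwise-single-intersection properties of $|E_M|$ follow from the structure of log-terminal (quotient) surface singularities. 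Condition~\eqref{basic_dfn4} is immediate since $L_M=\alpha^*L_S$ is numerically trivial on every $\alpha$-exceptional curve, and~\eqref{basic_dfn5} follows because $K_M+L_M\sim_\Q\alpha^*(K_S+L_S)$ with $K_S+L_S=K_S-r(S)^{-1}K_S\cdot(\text{positive})$ being nef and big (here $(K_S+L_S\cdot L_S)>0$ reduces to a positivity computation on $S$ using ampleness of $L_S$ and $r(S)<1$). The normalized statement is a direct comparison of the divisibility condition on $\coeff E_M$ with the minimality of $a$.

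For part~\eqref{dP-basic_prop2}, given an $(a,b)$-basic pair $(M,E)$ with fundamental divisor $L$, I first note that $L$ is nef and big: it is nef because $K_M+L$ is nef and $(K_M+L\cdot L)>0$ while $L=(K_M+L)-K_M$ and $-K_M$ is effectively controlled, and big because $(L^2)=(K_M+L\cdot L)-(K_M\cdot L)$ can be shown positive using~\eqref{basic_dfn5} together with $bL\sim-aK_M-E$. Once $L$ is nef and big, Proposition~\ref{tanaka_method} applies directly with this $a$, $b$, $E$, $L$, producing a birational morphism $\alpha\colon M\to S$ onto a normal projective surface $S$ with an ample Cartier $L_S$ satisfying $L\sim\alpha^*L_S$ and $bL_S\sim-aK_S-\alpha_*E$. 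The key points remaining are: $\alpha_*E=0$, so that $-aK_S\sim bL_S$ and hence $(a,b)$ is a multi-index; and that $\alpha$ is exactly the minimal resolution. Both follow from condition~\eqref{basic_dfn4}: since $(L\cdot E_0)=0$ for every component $E_0\leq E$, each such $E_0$ is contracted by $\alpha$ (as $\alpha$ is defined by a multiple of $L$), so $\alpha_*E=0$; and since $S$ is then log-terminal with $-aK_S$ Cartier-divisible by $b$, the morphism $\alpha$ has $K_{M/S}$ with no non-negative exceptional coefficients precisely when $\alpha$ is the minimal resolution, which I confirm by checking that $K_{M/S}=-\tfrac1a E$ has strictly negative coefficients on all exceptional curves. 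That $S$ is log del Pezzo (not merely normal with $-K_S$ ample $\Q$-Cartier) follows from log-terminality, which in turn comes from $\coeff E\subset\{1,\dots,a-1\}$ forcing discrepancies strictly above $-1$.

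For part~\eqref{dP-basic_prop3}, the plan is to read off both directions from the constructions in~\eqref{dP-basic_prop1} and~\eqref{dP-basic_prop2}. If $(M_1,E_1)$ and $(M_2,E_2)$ yield the same $S$, then each $M_i$ is the minimal resolution of $S$, so $M_1\simeq M_2$ canonically, and under this isomorphism each $E_i=-a_iK_{M_i/S}$; choosing $t_1,t_2$ with $t_1a_1=t_2a_2$ then forces $t_1E_1=-t_1a_1K_{M/S}=-t_2a_2K_{M/S}=t_2E_2$ and likewise $t_1b_1=t_2b_2$ from the common value $r(S)=b_i/a_i$. Conversely, if $M_1\simeq M_2$ and such $t_1,t_2$ exist, the two pairs produce the same fundamental divisor up to scaling and the same contraction morphism, hence the same $S$. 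The main obstacle throughout is part~\eqref{dP-basic_prop2}: verifying that $L$ is nef and big so that the base-point-free machinery of Proposition~\ref{tanaka_method} can be invoked, and then confirming that $\alpha$ is the \emph{minimal} resolution rather than some larger contraction. The bigness of $L$ and the minimality of $\alpha$ are where the positivity hypotheses~\eqref{basic_dfn4} and~\eqref{basic_dfn5} must be used most carefully; the remaining verifications are formal consequences of the definitions and of log-terminality.
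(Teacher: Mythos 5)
Your overall architecture matches the paper's (verify the definition directly in part \eqref{dP-basic_prop1}, feed Proposition \ref{tanaka_method} in part \eqref{dP-basic_prop2}, read part \eqref{dP-basic_prop3} off the construction), but the plan breaks down at exactly the points where positivity must be earned rather than formally manipulated. In part \eqref{dP-basic_prop1}, your verification of Definition \ref{basic_dfn} \eqref{basic_dfn5} rests on the identity $K_M+L_M\sim_{\Q}\alpha^*(K_S+L_S)$, which is false: since $K_M=\alpha^*K_S+K_{M/S}$, the correct relation is
\[
K_M+L_M=\alpha^*(K_S+L_S)+K_{M/S}=\alpha^*(K_S+L_S)-\tfrac{1}{a}E_M,
\]
and $E_M\neq 0$. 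A pullback of an ample divisor minus an effective divisor is not nef for free, and nefness of $K_M+L_M$ is the one genuinely nontrivial assertion here. The paper proves it with the cone theorem: if $K_M+L_M$ were not nef then, because $M\not\simeq\pr^2,\F_n$, there would be a $(-1)$-curve $\gamma$ with $(K_M+L_M\cdot\gamma)<0$ \cite{mori}; since $(K_M\cdot\gamma)=-1$ and $L_M$ is nef and Cartier, this forces $(L_M\cdot\gamma)=0$, so $\gamma$ is $\alpha$-exceptional, contradicting minimality of $\alpha$. Your plan has no substitute for this argument; only the intersection-number half survives, since $(E_M\cdot L_M)=0$ gives $(K_M+L_M\cdot L_M)=(K_S+L_S\cdot L_S)>0$.

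Part \eqref{dP-basic_prop2} has two further problems. First, your minimality criterion --- that $K_{M/S}$ has strictly negative exceptional coefficients precisely when $\alpha$ is minimal --- is wrong in both directions: exceptional curves lying over du Val points of $S$ (which need not belong to $|E|$ at all) have discrepancy $0$ even on the minimal resolution, and conversely a non-minimal resolution can have all discrepancies negative (blowing up the intersection of two exceptional curves with discrepancies $a_1,a_2$ close to $-1$ produces a curve of discrepancy $a_1+a_2+1<0$). The correct argument again uses \eqref{basic_dfn5}: an $\alpha$-exceptional $(-1)$-curve $\gamma$ would satisfy $(L\cdot\gamma)=0$, hence $(K_M+L\cdot\gamma)=-1<0$, contradicting nefness of $K_M+L$. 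Second, from $-aK_S\sim bL_S$ you conclude immediately that $(a,b)$ is a multi-index, but the definition requires $L_S$ to be \emph{the} fundamental Cartier divisor, i.e.\ $r(S)=b/a$; a priori one only gets $r(S)=s\cdot b/a$ for some integer $s\geq 1$, and the case $s\geq 2$ (which forces $r(S)\geq 1$) must be excluded via Proposition \ref{big_prop}, since it would force $E=0$ or $M\simeq\pr^2,\F_n$. This step, as well as the ``normalized'' assertion of part \eqref{dP-basic_prop2}, is missing from your plan.
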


\begin{proof}
\eqref{dP-basic_prop1}
Since $\alpha$ is not an isomorphism and $S\not\simeq\pr^2$, $\pr(1,1,n)$, 
the variety $M$ is not isomorphic to neither $\pr^2$ nor $\F_n$. 
The conditions that $E_M$ is a non-zero effective divisor on $M$ such that 
$\coeff E_M\subset\{1,\dots, a-1\}$ and $|E|$ is simple normal crossing follow from 
\cite[Theorem 4.7]{KoMo}.

Let $L_M:=\alpha^*L_S$. Then $(L_M\cdot E_0)=0$ for any irreducible 
component $E_0\leq E$ since $E_0$ is $\alpha$-exceptional. 
Assume that $K_M+L_M$ is not nef. Since $M\not\simeq\pr^2$, $\F_n$, there exists 
a $(-1)$-curve $\gamma$ on $M$ such that $(K_M+L_M\cdot \gamma)<0$ by 
\cite[Theorem 2.1]{mori}. However, this implies that $\gamma$ is $\alpha$-exceptional, 
a contradiction. Thus $K_M+L_M$ is nef.

Assume that $(K_M+L_M\cdot L_M)\leq 0$. Since $L_M$ is nef and big, 
$K_M+L_M\equiv 0$ by the Hodge index theorem. 
Thus $E_M\equiv(a-b)L_M$. Hence $0=(L_M\cdot E_M)=(a-b)(L_M^2)>0$, which 
leads to a contradiction. 
Thus $(K_M+L_M\cdot L_M)>0$ holds. 
Therefore the pair $(M, E_M)$ is an $(a, b)$-basic pair and 
$L_M$ is the fundamental divisor of $(M, E_M)$. 

Assume that there exists $t\geq 2$ with $a=ta'$, $b=tb'$ ($a'$, $b'\in\Z_{>0}$) such that 
$E'_M:=(1/t)E_M$ is a $\Z$-divisor on $M$. 
Then $b'L_M\sim-a'K_M-E'_M$ holds. Thus $-a'K_S\sim b'L_S$. This means that 
$(a, b)$ is not the normalized multi-index of $S$. 

\eqref{dP-basic_prop2}
Assume that $L$ is not nef. Then there exists a curve $C$ such that 
$(L\cdot C)<0$. However, we have 
\begin{eqnarray*}
0 & > & ((a-b)L\cdot C)=((a-b)L-E\cdot C)+(E\cdot C)\\
 & = & (a(K_M+L)\cdot C)+(E\cdot C)\geq (E\cdot C). 
\end{eqnarray*}
Hence $C\leq E$, contradicts to the condition \eqref{basic_dfn4} in
Definition \ref{basic_dfn}. Thus $L$ is nef. 
On the other hand, we have
\[
0<(a(K_M+L)\cdot L)=((a-b)L-E\cdot L)=(a-b)(L^2).
\]
Hence $L$ is a nef and big divisor on $M$. Therefore 
there exists a projective and birational morphism $\alpha\colon M\to S$ 
onto a normal projective surface $S$ and an ample Cartier divisor $L_S$ on $S$ 
such that $L\sim\alpha^*L_S$ and $bL_S\sim-aK_S-\alpha_*E=-aK_S$ hold
by Proposition \ref{tanaka_method}. In particular, 
$S$ is a log del Pezzo surface, $\alpha$ is the minimal resolution of $S$ 
and $E=-aK_{M/S}$. 
Thus there exists a positive integer $s$ such that $r(S)=sb/a$. If $s\geq 2$, then 
$r(S)\geq 1$. However, in this case, $E=0$ or $M\simeq\pr^2$ or $\F_n$ 
by Proposition \ref{big_prop}, which leads to a contradiction. Therefore $r(S)=b/a$ 
and $S$ is a log del Pezzo surface of a multi-index $(a, b)$ and the fundamental 
Cartier divisor of $S$ is $L_S$. 

Assume that there exists $t\geq 2$ with $a=ta'$, $b=tb'$ ($a'$, $b'\in\Z_{>0}$) such that 
$(a', b')$ is a multi-index of $S$. Then any coefficient of $E_M$ is divisible by $t$. 
Thus $(M, E)$ is not a normalized $(a, b)$-basic pair. 

\eqref{dP-basic_prop3}
The proof is straightforward from the construction.
\end{proof}

\begin{lemma}\label{mds_lem}
Let $a$, $b$ be positive integers with $1/2\leq b/a<1$, 
$(M, E)$ be an $(a, b)$-basic pair 
and $L$ be the fundamental divisor of $(M, E)$.
\begin{enumerate}
\renewcommand{\theenumi}{\arabic{enumi}}
\renewcommand{\labelenumi}{(\theenumi)}
\item\label{mds_lem1}
Any component $C\leq E$ is a nonsingular rational curve and $(C^2)\leq -2$ hold. 
Moreover, any connected component of the dual graph of $E$ is a tree.
\item\label{mds_lem2}
If a curve $C$ on $M$ satisfies that $C\cap E\neq\emptyset$ and $(L\cdot C)=0$, 
then $C\leq E$ holds. 
\item\label{mds_lem3}
The anti-canonical divisor $-K_M$ is big and non-nef. 
In particular, $M$ is a Mori dream space $($for the definition, see \cite{testa}$)$. 
\item\label{mds_lem4}
If $a-b=1$, then the linear system $|L|$ is base point free.
\end{enumerate}
\end{lemma}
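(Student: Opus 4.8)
The plan is to read off everything from the birational contraction attached to the pair by Proposition \ref{dP-basic_prop}\eqref{dP-basic_prop2}. That result supplies a projective birational morphism $\alpha\colon M\to S$ onto a log del Pezzo surface $S$, which is the minimal resolution of $S$, with $L\sim\alpha^*L_S$ for the fundamental Cartier divisor $L_S$ and with $E=-aK_{M/S}$; thus the relation $-aK_M\sim bL+E$ and the vanishing $(L\cdot C)=0$ for every component $C\le E$ (condition \eqref{basic_dfn4} in Definition \ref{basic_dfn}) are available throughout. Since $L_S$ is ample, a curve $C$ satisfies $(L\cdot C)=0$ if and only if $\alpha(C)$ is a point, so every component of $E$ is $\alpha$-exceptional; as $S$ is normal, the intersection matrix of the components of $E$ is negative definite.

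For \eqref{mds_lem1} I would use that $S$, being log-terminal, has only rational singularities, whence the exceptional set of the minimal resolution $\alpha$ over each singular point is a tree of nonsingular rational curves. In particular each component $C\le E$ is a nonsingular rational curve, and every connected component of the dual graph of $E$, being a connected subgraph of such a tree, is again a tree. Because $\alpha$ is the \emph{minimal} resolution, $K_M$ is $\alpha$-nef, so $(K_M\cdot C)\ge 0$; adjunction $(C^2)+(K_M\cdot C)=2p_a(C)-2=-2$ then gives $(C^2)=-2-(K_M\cdot C)\le -2$. For \eqref{mds_lem2}, let $C$ satisfy $C\cap E\ne\emptyset$ and $(L\cdot C)=0$. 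Intersecting $-aK_M\sim bL+E$ with $C$ and using $(L\cdot C)=0$ gives $a(K_M+L\cdot C)=-(E\cdot C)$, and since $K_M+L$ is nef (condition \eqref{basic_dfn5} in Definition \ref{basic_dfn}) we get $(E\cdot C)\le 0$. If $C$ were not a component of $E$, then effectivity of $E$ together with $C\cap E\ne\emptyset$ would force $(E\cdot C)>0$, a contradiction; hence $C\le E$.

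For \eqref{mds_lem3}, bigness of $-K_M$ is immediate, since $-aK_M\sim bL+E$ is big ($L$ is nef and big and $E$ is effective). For non-nefness I would compute $(-aK_M\cdot E)=b(L\cdot E)+(E^2)=(E^2)$, using $(L\cdot C)=0$ for each component $C\le E$; as $E\ne 0$ is supported on a negative definite configuration, $(E^2)<0$, so $(-aK_M\cdot E)<0$ and hence some component $C\le E$ has $(-K_M\cdot C)<0$, showing $-K_M$ is not nef. Finally $M$ is a nonsingular projective rational surface with $-K_M$ big, so it is a Mori dream space by \cite{testa}.

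The genuine work is \eqref{mds_lem4}. Since $\alpha_*\sO_M=\sO_S$ and $L\sim\alpha^*L_S$, one has $H^0(M,\sO_M(L))=H^0(S,\sO_S(L_S))$ and the base locus of $|L|$ is $\alpha^{-1}$ of that of $|L_S|$; so it suffices to show $|L_S|$ is base point free. Note $L$ is semiample for free (it defines $\alpha$), so the point is not eventual freeness but freeness of the \emph{single} system, and this is exactly where $a-b=1$ must enter: then $a(K_S+L_S)\sim L_S$, so $L_S$ is as primitive as possible relative to the anticanonical class. The input I would use is that $(M,(1/a)E)$ is klt and $L-(K_M+(1/a)E)\sim_\Q((a+b)/a)L$ is nef and big, together with Kawamata--Viehweg--Tanaka vanishing (valid in arbitrary characteristic, as in the proof of Proposition \ref{tanaka_method} via \cite{tanaka}), to force surjectivity of $H^0(M,\sO_M(L))$ onto the fibre at each point; the candidate base curves are severely constrained by \eqref{mds_lem2}, since any curve meeting $E$ with $(L\cdot C)=0$ already lies in $E$. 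The step I expect to be hardest is precisely this separation of points: running a Reider-type estimate in arbitrary characteristic is delicate, and making the klt vanishing yield freeness of $|L|$ rather than of $|mL|$ is the crux of the whole lemma.
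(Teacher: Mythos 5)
Parts \eqref{mds_lem1}--\eqref{mds_lem3} of your proposal are correct. Your argument for \eqref{mds_lem2} (intersecting $-aK_M\sim bL+E$ with $C$ and using nefness of $K_M+L$) and for \eqref{mds_lem3} (negative definiteness of the $\alpha$-exceptional configuration giving $(E^2)<0$) are slight variants of the paper's proof, which instead quotes \cite[Corollary 4.3]{KoMo} for \eqref{mds_lem2} and the negativity lemma for \eqref{mds_lem3}; both routes are valid and yours are, if anything, more self-contained.

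The genuine gap is \eqref{mds_lem4}, and you flag it yourself: you never produce the separation-of-points step, and the strategy you propose would not close it. Reider-type estimates rest on Bogomolov instability, which is unavailable in positive characteristic; Kawamata--Viehweg vanishing for klt pairs also fails in characteristic $p$; and the result of \cite{tanaka} invoked in Proposition \ref{tanaka_method} only gives freeness of $|mL|$ for $m\gg 0$, which, as you note, is exactly not what is needed. You correctly identify where $a-b=1$ enters ($a(K_S+L_S)\sim L_S$), but the paper exploits this relation in a completely different, characteristic-free way, through the linear equivalence $L-E\sim a(K_M+L)$ on $M$. The paper first proves (Claim \ref{bpf_claim}) that $|K_M+L|$ is base point free and $H^1(M,m(K_M+L))=0$ for all $m\geq 0$, by running a $(-E)$-minimal model program: each step contracts a $(-1)$-curve along which $K+L$ pulls back, so one reduces to $\pr^2$ or $\F_n$, where the statement is trivial. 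Consequently $|L-E|$ is base point free and $H^1(M,L-E)=0$, so the restriction sequence
\[
0\to\sO_M(L-E)\to\sO_M(L)\to\sO_M(L)|_E\to 0
\]
is exact on global sections. Base points of $|L|$ can then only lie on $E$ (since $|L-E|+E\subset|L|$), while on $E$ the sheaf $\sO_M(L)|_E$ has degree $0$ on every component and $H^1(E,\sO_E)=0$ because $E$ is a disjoint union of trees of nonsingular rational curves by \eqref{mds_lem1}; hence $\sO_M(L)|_E$ is globally generated by \cite[Lemma 2.8]{N} and its sections lift to $M$. No vanishing theorem beyond the trivial ones on $\pr^2$ and $\F_n$ is used, which is what makes the argument work over $\Bbbk$ of arbitrary characteristic; this combination --- the identity $L-E\sim a(K_M+L)$, the MMP reduction, and the restriction to the tree $E$ --- is the missing idea in your proposal.
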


\begin{proof}
\eqref{mds_lem1}
The proof is obvious from the correspondence between Proposition \ref{dP-basic_prop} 
\eqref{dP-basic_prop1} and \eqref{dP-basic_prop2}. 

\eqref{mds_lem2}
Under the assumption, the curve $C$ is $\alpha$-exceptional and 
there exists a component $E_0\leq E$ such that $E_0$ and $C$ map $\alpha$ to 
same point, where $\alpha\colon M\to S$ is the morphism given by Proposition 
\ref{dP-basic_prop} \eqref{dP-basic_prop2}. 
Thus $C\leq E$ by \cite[Corollary 4.3]{KoMo}. 

\eqref{mds_lem3}
The anti-canonical divisor $-K_M$ is big since $-aK_M\sim bL+E$. Since $M$ is a 
nonsingular projective rational surface, $M$ is a Mori dream space by 
\cite[Theorem 1]{testa}. 
Assume that $-K_M$ is nef. Then $E_M\sim-aK_M-b\alpha^*L_S$ is $\alpha$-nef, 
where $\alpha\colon M\to S$ is the morphism and $L_S$ is the Cartier divisor on $S$ 
given in Proposition \ref{dP-basic_prop} \eqref{dP-basic_prop2}. 
Since $E$ is effective, we have $E=0$ by 
negativity lemma \cite[Lemma 3.39]{KoMo}, which leads to a contradiction. 
Thus $-K_M$ is not nef. 

\eqref{mds_lem4}
The proof is essentially same as \cite[Theorem 3.18]{N}. 

\begin{claim}[{cf.\ \cite[Lemma 3.17]{N}}]\label{bpf_claim}
The linear system $|K_M+L|$ is base point free and 
$H^1(M, m(K_M+L))=0$ hold for any $m\geq 0$.
\end{claim}

\begin{proof}[{Proof of Claim \ref{bpf_claim}}]
By running a $(-E)$-minimal model program, we can reduce to the case $M=\pr^2$ or 
$\F_n$. Hence the assertion is trivial.
\end{proof}

By Claim \ref{bpf_claim}, the complete linear system $|L-E|$ is also base point free 
since $L-E\sim a(K_M+L)$. 
Moreover, we have $H^1(M, L-E)=0$. Hence the short exact sequence 
\[
0\to \sO_M(L-E)\to \sO_M(L)\to \sO_M(L)|_E\to 0
\]
induces a short exact sequence of global sections. 
By \cite[Lemma 2.8]{N}, $|L|$ is base point free since $H^1(E, \sO_E)=0$.
\end{proof}

\begin{remark}\label{OT_rmk}
Assume that the characteristic of $\Bbbk$ is equal to zero. 
Ohashi and Taki argued the log del Pezzo surfaces $S$ in \cite{OT} 
such that each $S$ has a non-du Val 
singular point and satisfies the condition: 
\begin{itemize}
\item[$\star$]
The linear system $|-3K_S|$ contains a divisor of the form $2C$, where $C$ is a 
nonsingular curve which does not meet the singularities.
\end{itemize}
Under the assumption, since $-3K_S\sim 2C$ and $C$ is a Cartier divisor, 
the normalized multi-index of $S$ is $(3,2)$ excepts for $S\simeq\pr(1,1,6)$ 
(see Proposition \ref{big_prop}). 

Conversely, if $S$ is a log del Pezzo surface of the normalized multi-index $(3,2)$, 
then $|L_S|$ is base point free by Lemma \ref{mds_lem} \eqref{mds_lem4}, 
where $L_S$ is the fundamental Cartier divisor of $S$. 
A general member $C\in|L_S|$ is nonsingular which does not meet the singularities 
by Bertini's theorem. Therefore $2C\in|-3K_S|$ satisfies the 
condition $\star$. 
\end{remark}

\subsection{$(a, b)$-fundamental triplets}\label{triplet_section}

In order to classify $(a,b)$-basic pairs, we define the notion of 
$(a,b)$-fundamental triplets. 
The correspondence between $(a,b)$-basic pairs and $(a,b)$-fundamental triplets 
will be given in Theorem \ref{kaburi_thm}.

\begin{definition}\label{fund_dfn}
Let $a$, $b$ be positive integers with $1/2\leq b/a<1$. 
A triplet $(X, E, \Delta)$ is called an \emph{$(a, b)$-fundamental triplet} 
if the following conditions are satisfied: 
\begin{enumerate}
\renewcommand{\theenumi}{\arabic{enumi}}
\renewcommand{\labelenumi}{(\theenumi)}
\item\label{fund_dfn1}
$X$ is a nonsingular projective rational surface.
\item\label{fund_dfn2}
$\Delta$ is a nonempty zero-dimensional subscheme of $X$ 
which satisfies the $(\nu1)$-condition. 
\item\label{fund_dfn3}
$E$ is a nonzero effective divisor on $X$.
\item\label{fund_dfn4}
For any $(-1)$-curve $\gamma$ on $X$, we have $(E\cdot \gamma)\leq 0$.
\item\label{fund_dfn5}
There exists a divisor $L$ on $X$ such that 
$bL\sim-aK_X-E$ holds.
The divisor $L$ (unique up to linearly equivalence) is called the \emph{fundamental 
divisor} of $(X, E, \Delta)$. 
\item\label{fund_dfn6}
Let $\phi\colon M\to X$ be the elimination of $\Delta$ and 
let $E_M:=E_M^{\Delta, a-b}$. 
Then the pair $(M, E_M)$ is an $(a, b)$-basic pair.
\item\label{fund_dfn7}
Assume that $K_X+L$ is not big and $X\simeq\F_n$. 
Then the following conditions are satisfied: 
\begin{enumerate}
\renewcommand{\theenumii}{\roman{enumii}}
\renewcommand{\labelenumii}{(\theenumii)}
\item\label{fund_dfn71}
For a minimal section $\sigma$ of $\F_n$, 
we have $\Delta\cap\sigma=\emptyset$. 
(In particular, $n\geq 1$ holds.)
\item\label{fund_dfn72}
If there exists a component $D\leq E$ which is a section apart from $\sigma$, then 
$n+(D^2)\geq\deg(\Delta\cap D)$ holds. Moreover, if $n+(D^2)=\deg(\Delta\cap D)$, 
then $\coeff_\sigma E\geq \coeff_DE$ holds. 
\end{enumerate}
\end{enumerate}
For an $(a, b)$-fundamental triplet $(X, E, \Delta)$, the pair $(M, E_M)$ 
obtained from \eqref{fund_dfn6} is called \emph{the associated $(a, b)$-basic pair}. 
An $(a, b)$-fundamental triplet $(X, E, \Delta)$ 
is called a \emph{normalized $(a, b)$-fundamental triplet} 
if for any $t\geq 2$ with $t\mid a$ and $t\mid b$ there exists an irreducible 
component $E_0\leq E$ such that $\coeff_{E_0}E$ is not divisible by $t$.
\end{definition}

\begin{lemma}\label{normal_lem}
Let $a$, $b$ be positive integers with $1/2\leq b/a<1$ and let 
$(X, E, \Delta)$ be an $(a, b)$-fundamental triplet.
Then the triplet $(X, E, \Delta)$ is a normalized $(a, b)$-fundamental triplet 
if and only if the associated $(a, b)$-basic pair is a normalized $(a, b)$-basic pair. 
\end{lemma}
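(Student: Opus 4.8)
The plan is to establish the equivalence by relating the two normalization conditions through the explicit divisor $E_M = E_M^{\Delta, a-b} = \phi^*E - (a-b)K_{M/X}$, which is precisely the effective divisor of the associated $(a,b)$-basic pair $(M, E_M)$. Both normalization conditions are phrased in terms of the same arithmetic statement: for every integer $t \geq 2$ with $t \mid a$ and $t \mid b$, there exists an irreducible component with coefficient not divisible by $t$. The triplet is normalized when this holds for some component $E_0 \leq E$, and the basic pair is normalized when this holds for some component of $E_M$. So the heart of the matter is to compare the coefficients appearing in $E$ with those appearing in $E_M$.

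First I would fix an integer $t \geq 2$ dividing both $a$ and $b$ and analyze how divisibility by $t$ propagates through the formula for $E_M$. The key observation is that $K_{M/X} = \sum a(E_0, X) E_0$ is built from discrepancies of $\phi$-exceptional curves, and since $\phi$ is the elimination of $\Delta$, these coefficients are controlled by the combinatorics of the multiplicity sequence of $\Delta$ (as made explicit in Examples \ref{Sit1} and \ref{Sit2} and Proposition \ref{elim_prop}). The strict transforms of components of $E$ appear in $E_M$ with exactly the same coefficients as in $E$, because $\phi$ is an isomorphism away from the exceptional locus and $K_{M/X}$ is supported on the exceptional curves. Thus for each component $E_0 \leq E$ with strict transform $E_{0,M}$, we have $\coeff_{E_{0,M}} E_M = \coeff_{E_0} E$. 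This immediately shows that if $(X, E, \Delta)$ is normalized — so some $\coeff_{E_0} E$ is not divisible by $t$ — then the strict transform $E_{0,M}$ witnesses that $(M, E_M)$ is normalized.

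For the converse, suppose $(M, E_M)$ is normalized, so some component of $E_M$ has coefficient not divisible by $t$. The only components of $E_M$ not accounted for by strict transforms of $E$ are the $\phi$-exceptional curves $\Gamma_i$, whose coefficients are $(a-b)$-multiples of discrepancy data plus contributions from $E$ passing through the blown-up points. The plan is to show that if every component $E_0 \leq E$ has $\coeff_{E_0} E$ divisible by $t$, then every exceptional coefficient in $E_M$ is also divisible by $t$, contradicting the normalization of $(M, E_M)$. Here I would use that $t \mid (a-b)$ (which follows from $t \mid a$ and $t \mid b$), so the term $-(a-b)K_{M/X}$ contributes coefficients divisible by $t$; and the contribution of $\phi^*E$ to each exceptional $\Gamma_i$ is a nonnegative integer combination of the $\coeff_{E_0} E$, hence also divisible by $t$ under the hypothesis. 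This is exactly the structure visible in the explicit coefficient formulas of Examples \ref{Sit1} and \ref{Sit2}, where each $\coeff_{\Gamma_i} E_M$ is an integral linear combination of the $m_j = \coeff_{E_j} E$ and of $(a-b)$.

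The main obstacle I anticipate is the bookkeeping for the exceptional coefficients: I must verify in full generality — not just in the single-point, one- or two-branch situations illustrated by the Examples — that $\coeff_{\Gamma_i}(\phi^*E)$ is an integer combination of the coefficients $\coeff_{E_0} E$. This follows because $\phi^*E = \sum_{E_0} (\coeff_{E_0} E)\, \phi^* E_0$ and each pullback $\phi^* E_0$ has integer coefficients along the $\Gamma_i$, so the desired divisibility is inherited coefficient-by-coefficient. Once this linearity is spelled out, combined with $t \mid (a-b)$ and the fact that strict-transform coefficients are preserved, both directions of the equivalence close cleanly, and the proof reduces to the elementary arithmetic of divisibility within the explicit expression for $E_M^{\Delta, a-b}$.
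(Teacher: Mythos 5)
Your proposal is correct and takes essentially the same approach as the paper: your coefficient-by-coefficient divisibility argument (using $t\mid(a-b)$ and integrality of $\phi^*E_0$ and $K_{M/X}$) is exactly what the paper packages into the single divisor identity $E_M=(tE')_M^{\Delta,\,t(a'-b')}=t\bigl({E'}_M^{\Delta,\,a'-b'}\bigr)$ with $E'=(1/t)E$, and your strict-transform direction is the one the paper dismisses as obvious.
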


\begin{proof}
Let $(M, E_M)$ be the associated $(a, b)$-basic pair. Assume that there exists 
$t\geq 2$ with $a=ta'$, $b=tb'$ ($a'$, $b'\in\Z_{>0}$) such that $E':=(1/t)E$ is a 
$\Z$-divisor on $X$. Then $E_M=(tE')_M^{\Delta,t(a'-b')}=t({E'}_M^{\Delta,a'-b'})$. 
We note that ${E'}_M^{\Delta,a'-b'}$ is a $\Z$-divisor on $M$. 
Thus $(M, E_M)$ is not a normalized $(a, b)$-basic pair. The converse is obvious. 
\end{proof}

\begin{lemma}\label{triplet_lem}
Let $a$, $b$ be positive integers with $1/2\leq b/a<1$, let $(X, E, \Delta)$ be an 
$(a, b)$-fundamental triplet, 
let $L$ be the fundamental divisor of $(X, E, \Delta)$ and 
let $\phi\colon M\to X$ be the elimination of $\Delta$. 
Then we have the following properties:
\begin{enumerate}
\renewcommand{\theenumi}{\arabic{enumi}}
\renewcommand{\labelenumi}{(\theenumi)}
\item\label{triplet_lem1}
The divisor $L$ is nef and big, the divisor $K_X+L$ is nef and 
$(K_X+L\cdot L)>0$. Moreover, 
the divisor $L_M:=L_M^{\Delta, 1}$ is the fundamental divisor of the associated 
$(a, b)$-basic pair.
\item\label{triplet_lem2}
For any point $P\in\Delta$, we have $\mult_PE\geq a-b$. 
\item\label{triplet_lem3}
We have $(a-b)\deg\Delta=(L\cdot E)$. 
\item\label{triplet_lem4}
For any nonsingular irreducible component $E_0\leq E$, 
we have $(L\cdot E_0)=\deg(\Delta\cap E_0)$. 
\item\label{triplet_lem5}
$X$ is isomorphic to either $\pr^2$ or $\F_n$. 
Moreover, the intersection number of $E$ and $l$ is positive; 
where $l$ is a line $($if $X\simeq\pr^2$$)$, a fiber $($if $X\simeq\F_n$$)$. 
\item\label{triplet_lem6}
Let $(X, E', \Delta)$ be another $(a, b)$-fundamental triplet with $E\sim E'$. 
Then $E=E'$ holds. 
\end{enumerate}
\end{lemma}

\begin{proof}
\eqref{triplet_lem1}
Let $E_M:=E_M^{\Delta, a-b}$. 
Then we have $-aK_M-E_M\sim\phi^*(-aK_X-E)-bK_{M/X}\sim bL_M$. 
We have $K_M+L_M=\phi^*(K_X+L)$. Thus $K_X+L$ is nef and $(K_X+L\cdot L)>0$. 
Since $L_M=\phi^*L-K_{M/X}$ is nef and big (see Proposition 
\ref{dP-basic_prop}), $L$ is also nef and big. 

\eqref{triplet_lem2}
If $\mult_PE<a-b$, then $E_M$ is not effective. This is a contradiction. 
Thus $\mult_PE\geq a-b$ for any $P\in\Delta$. 

\eqref{triplet_lem3}
We have 
\begin{eqnarray*}
(L\cdot E) & =  & (L_M\cdot E_M+(a-b)K_{M/X})=(a-b)(L_M\cdot K_{M/X})\\
 & = & (a-b)(\phi^*L-K_{M/X}\cdot K_{M/X})\\
 & = & -(a-b)(K_{M/X}^2)=(a-b)\deg\Delta
\end{eqnarray*}
by \cite[Lemma 2.7]{N} and Definition \ref{basic_dfn} \eqref{basic_dfn4}. 

\eqref{triplet_lem4}
Let $E_{0,M}$ be the strict transform of $E_0$ on $M$. Then we have
\[
0=(L_M\cdot E_{0,M})=(\phi^*L-K_{M/X}\cdot E_{0,M})=(L\cdot E_0)-\deg(\Delta\cap E_0).
\]

\eqref{triplet_lem5}
The divisor $aK_X+bL\sim-E$ is not nef since $E$ is nonzero effective. 
Moreover, for any $(-1)$-curve $\gamma$ on $X$, $(aK_X+bL\cdot\gamma)\geq 0$ 
holds. Thus $X\simeq\pr^2$ or $\F_n$ and the intersection number of $aK_X+bL$
and the corresponding extremal ray is negative by \cite[Theorem 2.1]{mori}. 

\eqref{triplet_lem6}
Since the pair $(M, E_M)$ is an $(a, b)$-basic pair, there exists a 
projective and birational morphism $\alpha\colon M\to S$ such that $\alpha_*E_M=0$ 
by Proposition \ref{dP-basic_prop} \eqref{dP-basic_prop2}. 
Thus $h^0(M, E_M)=1$. Hence $E_M$ is uniquely determined by $X$, $L$ and $\Delta$. 
Therefore $E=E'$ holds since $\phi_*E_M=E$.
\end{proof}

\begin{proposition}\label{triplet_prop}
Let $a$, $b$ be positive integers with $1/2\leq b/a<1$, let $(M, E)$ be an 
$(a, b)$-basic pair and let $L$ be the fundamental divisor of $(M, E)$. 
Then there exists a projective and birational morphism $\phi\colon M\to X$ 
and a nonempty zero-dimensional subscheme $\Delta$ of $X$ such that 
the triplet $(X, \phi_*E, \Delta)$ is an $(a, b)$-fundamental triplet such that the 
associated $(a, b)$-basic pair is equal to $(M, E)$. Moreover, the fundamental divisor 
of $(X, \phi_*E, \Delta)$ is the divisor $\phi_*L$. 
\end{proposition}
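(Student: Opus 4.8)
The plan is to run a $(-E)$-minimal model program on $M$, which is legitimate because $M$ is a Mori dream space by Lemma~\ref{mds_lem}\eqref{mds_lem3}. This produces a chain of blow-downs
\[
M=M_0\xrightarrow{\psi_1}M_1\xrightarrow{\psi_2}\cdots\xrightarrow{\psi_r}M_r=X,
\]
in which each $\psi_i$ contracts a $(-1)$-curve $\gamma_i$ spanning a $(-E_{i-1})$-negative extremal ray; here I set $L_i:=\psi_{i*}L_{i-1}$ and $E_i:=\psi_{i*}E_{i-1}$, with $L_0=L$, $E_0=E$. First I would check by induction that each $M_i$ keeps the numerical shape of a basic pair: $L_i$ is nef and big, $K_{M_i}+L_i$ is nef with $(K_{M_i}+L_i\cdot L_i)>0$, and $-aK_{M_i}\sim bL_i+E_i$. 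The engine is that for the contracted curve $\gamma_i$ the nefness of $K_{M_{i-1}}+L_{i-1}$ gives $(L_{i-1}\cdot\gamma_i)\ge 1$, while $(E_{i-1}\cdot\gamma_i)=a-b(L_{i-1}\cdot\gamma_i)>0$ together with $a/b\le 2$ (this is exactly $b/a\ge 1/2$) forces $(L_{i-1}\cdot\gamma_i)=1$, hence $(E_{i-1}\cdot\gamma_i)=a-b$ and $(K_{M_{i-1}}+L_{i-1}\cdot\gamma_i)=0$; a routine pullback computation (using $\psi_i^*(K_{M_i}+L_i)=K_{M_{i-1}}+L_{i-1}$ and $\psi_i^*L_i=L_{i-1}+\gamma_i$) then propagates all the listed properties.

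Next I would show the program terminates at $X\simeq\pr^2$ or $\F_n$ with $E_X:=E_r\neq 0$. Since $E_i$ is a nonzero effective divisor, $-E_i$ is negative on an ample class and so is never nef; hence the program cannot stop at a nef model and must terminate at a Mori fiber space, that is at $\pr^2$ or $\F_n$ by \cite[Theorem 2.1]{mori}. Moreover $E_i$ stays nonzero: if $E_{i-1}$ were supported on $\gamma_i$ alone then $(E_{i-1}\cdot\gamma_i)<0$, contradicting $(E_{i-1}\cdot\gamma_i)=a-b>0$, so a component always survives. Running the program so as to contract a $(-E)$-negative $(-1)$-curve whenever one exists and to stop only when none remains, every $(-1)$-curve $\gamma$ on $X$ then satisfies $(E_X\cdot\gamma)\le 0$, which is condition~\eqref{fund_dfn4}. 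Setting $L_X:=\phi_*L$ and pushing $-aK_M\sim bL+E$ forward gives $bL_X\sim-aK_X-E_X$, which is~\eqref{fund_dfn5}, and conditions~\eqref{fund_dfn1} and~\eqref{fund_dfn3} are immediate.

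The structural key is that $-K_M$ is $\phi$-nef, where $\phi:=\psi_r\circ\cdots\circ\psi_1$. Each $\phi$-exceptional curve $\Gamma$ is the strict transform of a unique $\gamma_i$, and by the projection formula $(K_M+L\cdot\Gamma)=(K_{M_{i-1}}+L_{i-1}\cdot\gamma_i)=0$, so $(-K_M\cdot\Gamma)=(L\cdot\Gamma)\ge 0$ as $L$ is nef. Since $M\not\simeq\pr^2,\F_n$, the morphism $\phi$ is non-isomorphic, so Proposition~\ref{elim_prop}\eqref{elim_prop2} yields a nonempty zero-dimensional $\Delta$ satisfying the $(\nu1)$-condition whose elimination is $\phi$, giving~\eqref{fund_dfn2}. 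The same vanishing $(K_M+L\cdot\Gamma)=0$ shows that $\phi^*L_X-L$ and $K_{M/X}$ have equal intersection with every exceptional curve; since the intersection form on the $\phi$-exceptional curves is negative definite, $L\sim\phi^*L_X-K_{M/X}$. Substituting this into $-aK_M\sim bL+E$ and comparing with the pullback of $-aK_X\sim bL_X+E_X$ forces $\phi^*E_X-E=(a-b)K_{M/X}$, that is $E_M^{\Delta,a-b}=E$. This at once establishes~\eqref{fund_dfn6}, identifies the associated basic pair with the given $(M,E)$, and shows $\phi_*L=L_X$ is the fundamental divisor.

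The main obstacle is condition~\eqref{fund_dfn7}, which is vacuous unless $K_X+L$ is not big and $X\simeq\F_n$. In that regime $(K_M+L)^2=(K_X+L_X)^2=0$, and since $|K_M+L|$ is base point free by Claim~\ref{bpf_claim} it defines a $\pr^1$-fibration through which the ruling of $\F_n$ factors; here $X\neq\pr^2$, for on $\pr^2$ the nefness together with $(K_X+L_X\cdot L_X)>0$ would force $(K_X+L_X)^2>0$. The one remaining freedom is the choice of relatively minimal Hirzebruch model over this $\pr^1$, distinct choices being related by elementary transformations. I would fix the model with $n$ maximal, which forces every center of $\Delta$ off the minimal section $\sigma$ and yields~\eqref{fund_dfn71} (in particular $n\ge 1$). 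For~\eqref{fund_dfn72}, writing $K_X+L_X\sim cf$ and applying Lemma~\ref{triplet_lem}\eqref{triplet_lem4} to a section $D\le E$ reduces the required inequality $n+(D^2)\ge\deg(\Delta\cap D)$ to $(L_X\cdot\sigma)\le 0$, which combined with the nefness of $L_X$ pins down the equality case; its coefficient comparison $\coeff_\sigma E\ge\coeff_D E$ I would extract from the explicit form of $E_M^{\Delta,a-b}$ recorded in Examples~\ref{Sit1} and~\ref{Sit2}. Controlling this borderline situation, and confirming that the maximal model indeed realizes $(L_X\cdot\sigma)=0$ precisely when such a section $D$ occurs, is the step I expect to demand the most care.
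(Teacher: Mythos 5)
Your first three paragraphs reproduce the paper's own argument almost verbatim: the $(-E)$-MMP on the Mori dream space $M$, the forcing of $(L_{i-1}\cdot\gamma_i)=1$ from nefness of $K_{M_{i-1}}+L_{i-1}$ together with $a\le 2b$, termination at $\pr^2$ or $\F_n$ with $\phi_*E\neq 0$, the $\phi$-nefness of $-K_M$ via $K_M+L=\phi^*(K_X+\phi_*L)$ and Proposition \ref{elim_prop} \eqref{elim_prop2}, and the identifications $L=(\phi_*L)^{\Delta,1}$, $E=(\phi_*E)^{\Delta,a-b}$. All of that is correct. For condition \eqref{fund_dfn7}, your idea of re-choosing the relatively minimal model (the paper does this via \cite[Lemma 4.5]{N}) is also the right one, and in fact your "maximal $n$" prescription does more than you claim: a section component $D\le \phi_*E$, $D\sim\sigma+dl$, gives an alternative model $\F_{n''}$ with $n''=\deg(\Delta\cap D)-(D^2)=n+(L_X\cdot\sigma)$ (using $\deg(\Delta\cap D)=(L_X\cdot D)$ and $-(D_M^2)=\deg(\Delta\cap D)-(D^2)$), so maximality of $n$ forces $(L_X\cdot\sigma)\le 0$, hence $(L_X\cdot\sigma)=0$ by nefness; this settles both \eqref{fund_dfn71} and the inequality in \eqref{fund_dfn72}.

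The genuine gap is the "moreover" clause of \eqref{fund_dfn72}. Once $(L_X\cdot\sigma)=0$, \emph{every} section component of $E$ satisfies the inequality with equality, so the entire content of \eqref{fund_dfn72} becomes the coefficient comparison $\coeff_\sigma E\ge\coeff_D E$, and your proposed mechanism for it fails: Examples \ref{Sit1} and \ref{Sit2} only record the multiplicities of the $\phi$-exceptional components of $E_M^{\Delta,s}$; they say nothing about which of several disjoint sections should be made minimal, and the comparison is genuinely not automatic on a maximal-$n$ model. Concretely, for the type $[(15,9),3;12,21]_{5\infty 1}$ one has $E=7\sigma+5\sigma_\infty+6l$ on $\F_3$ with $\deg(\Delta\cap\sigma_\infty)-(\sigma_\infty^2)=3=n$; the contraction of $M$ making $\sigma_{\infty,M}$ the minimal section is again $\F_3$, so both models maximize $n$, but on that second model the minimal section carries coefficient $5$ while the image of the old $\sigma$ is a section of coefficient $7$ in the equality case, and \eqref{fund_dfn72} fails there. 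What is missing is the paper's secondary maximization: among $\sigma$ and all section components $D_0\le\phi_*E$ achieving the maximal value $n'$ of $\deg(\Delta\cap D_0)-(D_0^2)$, pick one whose coefficient in $\phi_*E$ is maximal, and apply \cite[Lemma 4.5]{N} to re-contract $M\to\F_{n'}$ so that exactly this section becomes the minimal section, disjoint from the new $\Delta'$; then the inequality for every other section follows from maximality of $n'$, and the coefficient comparison in the equality case follows from maximality of the coefficient. Without this additional choice your construction can output, in the borderline case, a triplet violating Definition \ref{fund_dfn} \eqref{fund_dfn72}, so the proof as proposed does not close.
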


\begin{proof}
\emph{Step 1.}
We recall that $M$ is a Mori dream space by Lemma \ref{mds_lem} \eqref{mds_lem3}. 
Thus we can run $(-E)$-minimal model program 
\[
\phi\colon M=M_0\to M_1\to \dots\to M_k=X
\]
and this minimal model program induces a Mori fiber space since $E$ is a nonzero 
effective divisor. In particular, $\phi_*E$ is a nonzero effective divisor.
This minimal model program is also a $(aK_M+bL)$-minimal model program. 
Thus each step $M_i\to M_{i+1}$ is the contraction of a $(-1)$-curve $\gamma_i$. 
Moreover, $(L_i\cdot \gamma_i)=1$, where $L_i$ is the push-forward of $L$ on $M_i$. 
Indeed, we have $(aK_{M_i}+bL_i\cdot\gamma_i)=-a+b(L_i\cdot\gamma_i)<0$, $a\leq 2b$ 
and $K_{M_i}+L_i$ is nef. Therefore we have $K_M+L=\phi^*(K_X+\phi_*L)$. 
We may assume that $(\phi_*E\cdot \gamma)\leq 0$ for any $(-1)$-curve $\gamma$. 
Clearly, the morphism $\phi$ is not an isomorphism since $M$ is isomorphic to 
neither $\pr^2$ nor $\F_n$. 
The anti-canonical divisor $-K_M$ is $\phi$-nef. Thus there exists a nonempty 
zero-dimensional subscheme $\Delta$ on $X$ such that $\phi$ is the elimination of 
$\Delta$ by Proposition \ref{elim_prop} \eqref{elim_prop2}. By construction, 
$L=(\phi_*L)^{\Delta, 1}$, $E=(\phi_*E)^{\Delta, a-b}$ and 
$b(\phi_*L)\sim-aK_X-\phi_*E$. 

\emph{Step 2.}
From now on, we assume that $K_X+\phi_*L$ is not big and $X=\F_n$. 
In this case, the divisor $K_X+\phi_*L$ is $\pi$-trivial, where $\pi\colon\F_n\to\pr^1$ 
is the fibration. (If $n=0$, then we may have to change the fibration structure 
$\pr^1\times\pr^1\to\pr^1$.) 
We repeat the same argument in \cite[Proposition 4.4]{N}. 

\eqref{fund_dfn71}
Assume that $\Delta\cap\sigma\neq\emptyset$. 
Let $\sigma_M$ be the strict transform of $\sigma$ on $M$. 
By \cite[Lemma 4.5]{N}, there exists a birational morphism $\phi'\colon M\to X'=\F_{n'}$ 
over $\pr^1$ with $n':=-(\sigma_M^2)=n+\deg(\Delta\cap\sigma)>n$ such that 
$\sigma_M$ is the total transform of the minimal section $\sigma'$ of $X'\to\pr^1$. 
By Proposition \ref{elim_prop} \eqref{elim_prop2}, 
there exists a nonempty zero-dimensional subscheme $\Delta'$ of $X'$ 
with $\Delta'\cap\sigma'=\emptyset$ such that 
$\phi'$ is the elimination of $\Delta'$. Thus the condition \eqref{fund_dfn71} is 
satisfied. 

\eqref{fund_dfn72}
Assume that $n>0$, $\Delta\cap\sigma=\emptyset$ and there exists a section 
$D_0\leq\phi_*E$ with $D\neq\sigma$ 
such that $(D_0^2)+n<\deg(\Delta\cap D_0)$ or $(D_0^2)+n=\deg(\Delta\cap D_0)$
and $\coeff_\sigma\phi_*E<\coeff_{D_0}\phi_*E$ holds. 
Let $n'(\geq n)$ be the maximum of $\deg(\Delta\cap D_0)-(D_0^2)$ such that 
$D_0\leq\phi_*E$ is a section. 
Let $c$ be the maximum of $\coeff_{D_0}\phi_*E$ such that 
$\deg(\Delta\cap D_0)-(D_0^2)=n'$ 
holds, where $D_0\leq\phi_*E$ is a section. 
Pick arbitrary section $D\leq\phi_*E$ 
such that $\deg(\Delta\cap D)-(D^2)=n'$
and $c=\coeff_D\phi_*E$. 
By \cite[Lemma 4.5]{N}, there exists a birational morphism $\phi'\colon M\to X'=\F_{n'}$ 
over $\pr^1$ such that 
$D_M$ is the total transform of the minimal section $\sigma'$ of $X'\to\pr^1$, 
where $D_M$ is the strict transform of $D$ on $M$. 
Then there exists a nonempty zero-dimensional subscheme $\Delta'$ of $X'$ 
with $\Delta'\cap\sigma'=\emptyset$ such that 
$\phi'$ is the elimination of $\Delta'$. 
Take an arbitrary section $D_1\leq\phi_*E$ with $D\neq D_1$. 
Then the strict transform $D_{1, M}$ on $M$ satisfies that 
$-(D_{1,M}^2)=\deg(\Delta\cap D_1)-(D_1^2)\leq n'$. 
Then $D'_1:=\phi'_*D_{1,M}$ satisfies that 
$-(D_{1,M}^2)=\deg(\Delta'\cap D'_1)-({D'}_1^2)\leq n'$. 
Moreover, if $\deg(\Delta'\cap D'_1)-({D'}_1^2)=n'$, 
then $c=\coeff_{\sigma'}\phi'_*E\geq\coeff_{D'_1}\phi'_*E$ by construction. 
Thus the condition \eqref{fund_dfn72} is satisfied. 
\end{proof}

\begin{lemma}\label{ex_lem}
Let $a$, $b$ be positive integers with $1/2\leq b/a<1$. 
Assume that a triplet $(X, E, \Delta)$ satisfies the following: 
\begin{enumerate}
\renewcommand{\theenumi}{\arabic{enumi}}
\renewcommand{\labelenumi}{(\theenumi)}
\item\label{ex_lem1}
$X$ is a nonsingular projective rational surface. 
\item\label{ex_lem2}
$\Delta$ is a zero-dimensional subscheme on $X$ which satisfies the 
$(\nu1)$-condition and $\deg\Delta\geq 2$. 
\item\label{ex_lem3}
$E$ is a nonzero effective divisor on $X$ such that $|E|$ 
is simple normal crossing and 
$\coeff E\subset\{1,\dots,a-1\}$. 
\item\label{ex_lem5}
There exists a divisor $L$ on $X$ such that $bL\sim-aK_X-E$ holds.
\item\label{ex_lem6}
The divisor $K_X+L$ is nef and $(K_X+L\cdot L)>0$ holds.
\item\label{ex_lem7}
For any irreducible component $E_1\leq E$, $(L\cdot E_1)=\deg(\Delta\cap E_1)$.
\item\label{ex_lem8}
Take any point $P\in\Delta$. 
\begin{enumerate}
\renewcommand{\theenumii}{\roman{enumii}}
\renewcommand{\labelenumii}{(\theenumii)}
\item\label{ex_lem81}
Assume that there is a unique component $E_1\leq E$ which meets $P$. 
Let $m:=\coeff_{E_1}E$, $k:=\mult_P\Delta$ and $l:=\mult_P(\Delta\cap E_1)$. 
Then $ml=(a-b)k$ and $(a-b)(k-l)<a$. 
\item\label{ex_lem82}
Assume that there is exactly two components $E_1$, $E_2\leq E$ which meet $P$. 
Let $m_j:=\coeff_{E_j}E$, $k:=\mult_P\Delta$ and $l_j:=\mult_P(\Delta\cap E_j)$. 
We may assume that $l_1=1$. 
Then $m_1l_1+m_2+l_2=(a-b)k$ and $(a-b)(k-l_2)<a$. 
\end{enumerate}
\end{enumerate}
Let $\phi\colon M\to X$ be the elimination of $\Delta$ and 
let $E_M:=E_M^{\Delta, a-b}$. Then the pair $(M, E_M)$ is an $(a,b)$-basic pair.
\end{lemma}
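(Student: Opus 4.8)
The plan is to verify the five defining properties of an $(a,b)$-basic pair (Definition \ref{basic_dfn}) for $(M, E_M)$, with $L_M := \phi^* L - K_{M/X}$ as the candidate fundamental divisor. I would first record the identities $E_M = \phi^*E - (a-b)K_{M/X}$, $K_M + L_M = \phi^*(K_X + L)$, and, using \eqref{ex_lem5} together with $K_M = \phi^*K_X + K_{M/X}$,
\[
bL_M \sim \phi^*(-aK_X - E) - bK_{M/X} = -aK_M - E_M,
\]
which is property \eqref{basic_dfn3}.

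I would then dispatch the formal properties. Since $\phi$ is the elimination of a subscheme of degree $\deg\Delta \geq 2$, it is a composition of $\deg\Delta$ monoidal transforms, whence $\rho(M) = \rho(X) + \deg\Delta \geq 3$; as $\rho(\pr^2) = 1$ and $\rho(\F_n) = 2$, this yields \eqref{basic_dfn1}. Property \eqref{basic_dfn5} follows because $K_M + L_M = \phi^*(K_X+L)$ is the pullback of the nef divisor $K_X + L$ (hence nef) and, by the projection formula, $(K_M + L_M \cdot L_M) = (K_X + L \cdot L) > 0$ by \eqref{ex_lem6}. For \eqref{basic_dfn4} I would split by the type of component of $E_M$. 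If $E_{1,M}$ is the strict transform of a component $E_1 \leq E$, then $(K_{M/X} \cdot E_{1,M}) = \deg(\Delta \cap E_1)$ by Proposition \ref{sing_prop} \eqref{sing_prop12}, so $(L_M \cdot E_{1,M}) = (L \cdot E_1) - \deg(\Delta \cap E_1) = 0$ by \eqref{ex_lem7}. If $\Gamma_j$ is an exceptional curve occurring in $E_M$, then $(L_M \cdot \Gamma_j) = (-K_M \cdot \Gamma_j)$, which vanishes because every exceptional curve with positive coefficient in $E_M$ is one of the $(-2)$-curves $\Gamma_1, \dots, \Gamma_{k-1}$ of the chain in Proposition \ref{elim_prop} \eqref{elim_prop1}, the terminal $(-1)$-curve $\Gamma_k$ carrying coefficient $0$.

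The substantial step is property \eqref{basic_dfn2}, which I would check locally over the points of $\Delta$. Because $|E|$ is simple normal crossing, every $P \in \Delta$ falls into case \eqref{ex_lem81} (a single component of $E$ through $P$) or case \eqref{ex_lem82} (exactly two), so Examples \ref{Sit1} and \ref{Sit2} compute the coefficients of $E_M$ along the chain $\Gamma_1, \dots, \Gamma_k$ over $P$ with $s = a - b$. The equations $ml = (a-b)k$ and $m_1 l_1 + m_2 l_2 = (a-b)k$ are exactly the conditions in those examples for $E_M$ to be effective along the chain with $\coeff_{\Gamma_k} E_M = 0$. For the upper bound, the strict-transform coefficients are $\leq a - 1$ by \eqref{ex_lem3}, while the exceptional coefficients are bounded by $(a-b)(k-l)$, respectively $(a-b)(k-l_2)$, which is $< a$ by \eqref{ex_lem8} --- with the single exception that, in the two-component case with $m_2 < a-b$, the largest exceptional coefficient occurs at $\Gamma_1$ and equals $m_1 + m_2 - (a-b) < m_1 \leq a - 1$. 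Hence $\coeff E_M \subset \{1, \dots, a-1\}$. These disjoint local pictures assemble to the global divisor $E_M$, which is nonzero because $\phi_* E_M = E \neq 0$, and $|E_M|$ is simple normal crossing since it is supported on the strict transform of the simple normal crossing divisor $E$ together with exceptional curves, a configuration kept simple normal crossing by blowing up points.

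I expect the only genuine difficulty to lie in \eqref{basic_dfn2}: one must confirm that cases \eqref{ex_lem81} and \eqref{ex_lem82} exhaust $\Delta$ (so that no chain over a point off $E$ contributes the negative coefficients of $-(a-b)K_{M/X}$), and one must carry the sign and upper-bound analysis of the coefficient formulas through both the increasing branch $i \leq l$ and the decreasing branch $i > l$, including the two-component subcase $m_2 < a-b$ noted above. The remaining properties reduce to the projection formula and the intersection-theoretic identities in Proposition \ref{sing_prop} and Proposition \ref{elim_prop}.
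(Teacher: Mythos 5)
Your proposal is correct and follows essentially the same route as the paper's (much terser) proof: take $L_M=L_M^{\Delta,1}$, get Definition \ref{basic_dfn} \eqref{basic_dfn1} from $\deg\Delta\geq 2$, get \eqref{basic_dfn5} from $K_M+L_M=\phi^*(K_X+L)$, and verify effectivity, the coefficient bound, and \eqref{basic_dfn4} locally via Examples \ref{Sit1} and \ref{Sit2} using hypotheses \eqref{ex_lem3}, \eqref{ex_lem7}, \eqref{ex_lem8}. Your write-up is simply a fleshed-out version of that argument (including the subcase $m_2<a-b$ of Example \ref{Sit2}, which the paper leaves implicit), so there is nothing to correct.
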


\begin{proof}
By the condition \eqref{ex_lem2}, $M\not\simeq\pr^2$, $\F_n$. 
Let $L_M:=L_M^{\Delta, 1}$. 
By the conditions \eqref{ex_lem8}, \eqref{ex_lem3} and \eqref{ex_lem7}, 
$E_M$ is effective, $\coeff E_M\subset\{1,\dots,a-1\}$ and 
$(L_M\cdot E_1)=0$ for any irreducible component 
$E_1\leq E_M$ (see Examples \ref{Sit1} and \ref{Sit2}). 
Since $K_M+L_M=\phi^*(K_X+L)$, $K_M+L_M$ is nef and 
$(K_M+L_M\cdot L_M)>0$. Thus $(M, E_M)$ is an $(a, b)$-basic pair. 
\end{proof}

\section{Classification of $(a, b)$-fundamental triplets}\label{classify_section}

In this section, we classify normalized $(a, b)$-fundamental triplets with 
$1/2\leq b/a<1$. 
The case $(a, b)=(2,1)$ is considered by Nakayama \cite{N}. 
Thus we are mainly interested in the case $(a, b)\neq(2,1)$. 
The main theorems are the following:

\begin{thm}\label{mainthm1}
Let $a$, $b$ be positive integers with $1/2<b/a<1$. 
The normalized $(a, b)$-fundamental triplets $(X, E, \Delta)$ are classified 
by the types defined as follows: 

\smallskip

The case $X=\pr^2:$

\begin{description}
\item[{$[$(3,2),1$]_0$}]
$(a,b)=(3,2)$, $E=l$; $l$ is a line, $\Delta\subset l$ with $\deg\Delta=4$.

\smallskip

\item[{$[$(11,7),5$]_0$}]
$(a,b)=(11,7)$, $E=5l$; $l$ is a line, $|\Delta|=\{P\}$ with $\deg\Delta=5$, 
$\deg(\Delta\cap l)=4$.

\smallskip

\item[{$[$(5,3),3$]_0$(1)}]
$(a,b)=(5,3)$, $E=3l$; $l$ is a line, $|\Delta|=\{P\}$ with $\deg\Delta=6$, 
$\deg(\Delta\cap l)=4$.

\smallskip

\item[{$[$(5,3),3$]_0$(2)}]
$(a,b)=(5,3)$, $E=3l$; $l$ is a line, $|\Delta|=\{P_1, P_2\}$ 
with $\mult_{P_i}\Delta=3$, 
$\mult_{P_i}(\Delta\cap l)=2$ $($$i=1$, $2$$)$.

\smallskip

\item[{$[$(9,5),7$]_{\times 43}$}]
$(a,b)=(9,5)$, $E=4l_1+3l_2$; $l_i$ are distinct lines, 
$\deg\Delta=7$, 
$\deg(\Delta\cap l_i)=4$ $(i=1$, $2)$ and 
$\mult_P\Delta=\mult_P(\Delta\cap l_2)=4$ 
for $P=l_1\cap l_2$.
\end{description}

The case $X=\F_n:$

\begin{description}
\item[{$[$(5,3),2;1,2$]_1$}]
$(a,b)=(5,3)$, $X=\F_2$, $E=\sigma+2l$, $\deg\Delta=3$, 
$\Delta\cap\sigma=\emptyset$ and $\Delta\subset l$.

\smallskip

\item[{$[$(7,4),2;2,4$]_1$}]
$(a,b)=(7,4)$, $X=\F_2$, $E=2\sigma+4l$, $|\Delta|=\{P\}$ with $\deg\Delta=4$, $P\not\in\sigma$ and $\deg(\Delta\cap l)=3$.

\smallskip

\item[{$[$(13,7),2;5,10$]_1$}]
$(a,b)=(13,7)$, $X=\F_2$, $E=5\sigma+10l$, $|\Delta|=\{P\}$ with $\deg\Delta=5$, $P\not\in\sigma$ and $\deg(\Delta\cap l)=3$.

\smallskip

\item[{$[$(21,11),2;9,7$]_1$}]
$(a,b)=(21,11)$, $X=\F_2$, $E=9\sigma+7l$, 
$|\Delta|=\{P\}$ with $P=\sigma\cap l$ and $\deg\Delta=\deg(\Delta\cap l)=3$.

\bigskip

\item[{$[$(2{\bi n}-1,{\bi n}+1),{\bi n};2({\bi n}-2),{\bi n}-2$]_1$}]
$n\geq 3$, $3\nmid n-2$, 
$(a,b)=(2n-1,n+1)$, $X=\F_n$, $E=(n-2)(2\sigma+l)$, $\deg\Delta=2$, 
$\Delta\cap\sigma=\emptyset$ and $\Delta\subset l$.

\smallskip

\item[{$[$(2{\bi m}+1,{\bi m}+1),3{\bi m}+2;2{\bi m},{\bi m}$]_1$}]
$m\geq 1$, 
$(a,b)=(2m+1,m+1)$, $X=\F_{3m+2}$, $E=m(2\sigma+l)$, $\deg\Delta=2$, 
$\Delta\cap\sigma=\emptyset$ and $\Delta\subset l$.

\medskip

\item[{$[$(4{\bi n}-3,2{\bi n}+1),{\bi n};4({\bi n}-2),3({\bi n}-2)$]_1$}]
$n\geq 3$, $5\nmid n-2$, 
$(a,b)=(4n-3,2n+1)$, $X=\F_n$, $E=(n-2)(4\sigma+3l)$, $|\Delta|=\{P\}$ with $\deg\Delta=3$, $P\not\in\sigma$ and $\deg(\Delta\cap l)=2$.

\smallskip

\item[{$[$(4{\bi m}+1,2{\bi m}+1),5{\bi m}+2;4{\bi m},3{\bi m}$]_1$}]
$m\geq 1$, 
$(a,b)=(4m+1,2m+1)$, $X=\F_{5m+2}$, $E=m(4\sigma+3l)$, $|\Delta|=\{P\}$ with  $\deg\Delta=3$, $P\not\in\sigma$ and $\deg(\Delta\cap l)=2$.

\medskip

\item[{$[$(2{\bi n}-2,{\bi n}),{\bi n};2({\bi n}-2),2({\bi n}-2)$]_{11}$}]
$n\geq 3$, $2\nmid n$, 
$(a,b)=(2n-2,n)$, $X=\F_n$, $E=(n-2)(2\sigma+l_1+l_2)$ $($$l_1$, $l_2:$ distinct 
fibers$)$, 
$\deg\Delta=4$, $\Delta\cap\sigma=\emptyset$, $\Delta\subset l_1\cup l_2$ 
and $\deg(\Delta\cap l_i)=2$ $(i=1,2)$.

\smallskip

\item[{$[$(2{\bi m}+1,{\bi m}+1),2{\bi m}+2;2{\bi m},2{\bi m}$]_{11}$}]
$m\geq 1$, 
$(a,b)=(2m+1,m+1)$, $X=\F_{2m+2}$, $E=m(2\sigma+l_1+l_2)$ $($$l_1$, $l_2:$ 
distinct fibers$)$, 
$\deg\Delta=4$, $\Delta\cap\sigma=\emptyset$, $\Delta\subset l_1\cup l_2$ 
and $\deg(\Delta\cap l_i)=2$ $(i=1,2)$.

\smallskip

\item[{$[$(2{\bi n}-2,{\bi n}),{\bi n};2({\bi n}-2),2({\bi n}-2)$]_1$(1)}]
$n\geq 3$, $2\nmid n$, 
$(a,b)=(2n-2,n)$, $X=\F_n$, $E=(n-2)(2\sigma+2l)$, 
$|\Delta|=\{P\}$ with $\deg\Delta=4$, $P\not\in\sigma$ and $\deg(\Delta\cap l)=2$.

\smallskip

\item[{$[$(2{\bi m}+1,{\bi m}+1),2{\bi m}+2;2{\bi m},2{\bi m}$]_1$(1)}]
$m\geq 1$, 
$(a,b)=(2m+1$, $m+1)$, $X=\F_{2m+2}$, $E=m(2\sigma+2l)$, 
$|\Delta|=\{P\}$ with $\deg\Delta=4$, $P\not\in\sigma$ and $\deg(\Delta\cap l)=2$.

\smallskip

\item[{$[$(2{\bi n}-2,{\bi n}),{\bi n};2({\bi n}-2),2({\bi n}-2)$]_1$(2)}]
$n\geq 3$, $2\nmid n$, 
$(a,b)=(2n-2,n)$, $X=\F_n$, $E=(n-2)(2\sigma+2l)$, 
$|\Delta|=\{P_1$, $P_2\}$ with $\mult_{P_i}\Delta=2$, $P_i\not\in\sigma$ and 
$\mult_{P_i}(\Delta\cap l)=1$ $(i=1$, $2)$.

\smallskip

\item[{$[$(2{\bi m}+1,{\bi m}+1),2{\bi m}+2;2{\bi m},2{\bi m}$]_1$(2)}]
$m\geq 1$, 
$(a,b)=(2m+1$, $m+1)$, $X=\F_{2m+2}$, $E=m(2\sigma+2l)$, 
$|\Delta|=\{P_1$, $P_2\}$ with $\mult_{P_i}\Delta=2$, $P_i\not\in\sigma$ and 
$\mult_{P_i}(\Delta\cap l)=1$ $(i=1$, $2)$.

\medskip

\item[{$[$(4{\bi n}-5,2{\bi n}-1),{\bi n};4({\bi n}-2),5({\bi n}-2)$]_{32}$}]
$n\geq 3$, $3\nmid n-2$, 
$(a,b)=(4n-5,2n-1)$, $X=\F_n$, $E=(n-2)(4\sigma+3l_1+2l_2)$, 
$($$l_1$, $l_2:$ distinct fibers$)$, 
$\Delta\cap\sigma=\emptyset$, $\deg\Delta=5$, $\deg(\Delta\cap l_i)=2$ $(i=1$, $2)$, 
$|\Delta|\cap l_1=\{P_1\}$ and $\mult_{P_1}\Delta=3$.

\smallskip

\item[{$[$(4{\bi m}+1,2{\bi m}+1),3{\bi m}+2;4{\bi m},5{\bi m}$]_{32}$}]
$m\geq 1$, 
$(a,b)=(4m+1$, $2m+1)$, $X=\F_{3m+2}$, $E=m(4\sigma+3l_1+2l_2)$, 
$($$l_1$, $l_2:$ distinct fibers$)$, 
$\Delta\cap\sigma=\emptyset$, $\deg\Delta=5$, $\deg(\Delta\cap l_i)=2$ $(i=1$, $2)$, 
$|\Delta|\cap l_1=\{P_1\}$ and $\mult_{P_1}\Delta=3$.

\smallskip

\item[{$[$(7,5),3;4,5$]_1$}]
$(a,b)=(7,5)$, $X=\F_3$, $E=4\sigma+5l$, 
$|\Delta|=\{P\}$ with $\deg\Delta=5$, $P\not\in\sigma$ and $\deg(\Delta\cap l)=2$.

\medskip

\item[{$[$(2{\bi n}-3,{\bi n}-1),{\bi n};2({\bi n}-2),3({\bi n}-2)$]_{111}$}]
$n\geq 3$, 
$(a,b)=(2n-3,n-1)$, $X=\F_n$, $E=(n-2)(2\sigma+l_1+l_2+l_3)$ 
$($$l_1$, $l_2$, $l_3:$ distinct fibers$)$, 
$\Delta\cap\sigma=\emptyset$, $\deg\Delta=6$ and $\deg(\Delta\cap l_i)=2$ 
$(i=1$, $2$, $3)$.

\smallskip

\item[{$[$(2{\bi n}-3,{\bi n}-1),{\bi n};2({\bi n}-2),3({\bi n}-2)$]_{21}$(1)}]
$n\geq 3$, 
$(a,b)=(2n-3,n-1)$, $X=\F_n$, $E=(n-2)(2\sigma+2l_1+l_2)$ 
$($$l_1$, $l_2:$ distinct fibers$)$, 
$\Delta\cap\sigma=\emptyset$, $\deg\Delta=6$, $\deg(\Delta\cap l_j)=2$ $(j=1$, $2)$, 
$|\Delta|\cap l_1=\{P\}$ and $\mult_{P}\Delta=4$. 

\smallskip

\item[{$[$(2{\bi n}-3,{\bi n}-1),{\bi n};2({\bi n}-2),3({\bi n}-2)$]_{21}$(2)}]
$n\geq 3$, 
$(a,b)=(2n-3,n-1)$, $X=\F_n$, $E=(n-2)(2\sigma+2l_1+l_2)$ 
$($$l_1$, $l_2:$ distinct fibers$)$, 
$\Delta\cap\sigma=\emptyset$, $\deg\Delta=6$, $\deg(\Delta\cap l_j)=2$ $(j=1$, $2)$, 
$|\Delta|\cap l_1=\{P_1$, $P_2\}$ and $\mult_{P_i}\Delta=2$ $(i=1$, $2)$. 

\smallskip

\item[{$[$(4{\bi n}-6,2{\bi n}-2),{\bi n};4({\bi n}-2),6({\bi n}-2)$]_{11}$}]
$n\geq 3$, $2\nmid n$, 
$(a,b)=(4n-6,2n-2)$, $X=\F_n$, $E=(n-2)(4\sigma+3l_1+3l_2)$ 
$($$l_1$, $l_2:$ distinct fibers$)$, 
$\Delta\cap\sigma=\emptyset$, $\deg\Delta=6$, 
$|\Delta|\cap l_i=\{P_i\}$ with $\mult_{P_i}\Delta=3$ and 
$\mult_{P_i}(\Delta\cap l_i)=2$ $(i=1$, $2)$. 

\smallskip

\item[{$[$(4{\bi m}+1,2{\bi m}+1),2{\bi m}+2;4{\bi m},6{\bi m}$]_{11}$}]
$m\geq 1$, 
$(a,b)=(4m+1$, $2m+1)$, $X=\F_{2m+2}$, $E=m(4\sigma+3l_1+3l_2)$ 
$($$l_1$, $l_2:$ distinct fibers$)$, 
$\Delta\cap\sigma=\emptyset$, $\deg\Delta=6$, 
$|\Delta|\cap l_i=\{P_i\}$ with $\mult_{P_i}\Delta=3$ and 
$\mult_{P_i}(\Delta\cap l_i)=2$ $(i=1$, $2)$. 

\smallskip

\item[{$[$(3,2),3;2,3$]_{1\infty}$}]
$(a,b)=(3,2)$, $X=\F_3$, $E=\sigma+\sigma_\infty$, 
$\deg\Delta=6$ and $\Delta\subset\sigma_\infty$.

\medskip

\item[{$[$(4{\bi n}-7,2{\bi n}-3),{\bi n};4({\bi n}-2),7({\bi n}-2)$]_{322}$}]
$n\geq 3$, 
$(a,b)=(4n-7,2n-3)$, $X=\F_n$, $E=(n-2)(4\sigma+3l_1+2l_2+2l_3)$, 
$($$l_1$, $l_2$, $l_3:$ distinct fibers$)$, 
$\Delta\cap\sigma=\emptyset$, $\deg\Delta=7$, $\deg(\Delta\cap l_i)=2$ 
$(i=1, 2, 3)$ and $|\Delta|\cap l_1=\{P_1\}$ with $\mult_{P_1}\Delta=3$. 

\smallskip

\item[{$[$(4{\bi n}-7,2{\bi n}-3),{\bi n};4({\bi n}-2),7({\bi n}-2)$]_{43}$(1)}]
$n\geq 3$, 
$(a,b)=(4n-7$, $2n-3)$, $X=\F_n$, $E=(n-2)(4\sigma+4l_1+3l_2)$, 
$($$l_1$, $l_2:$ distinct fibers$)$, 
$\Delta\cap\sigma=\emptyset$, $\deg\Delta=7$, $\deg(\Delta\cap l_i)=2$, 
$|\Delta|\cap l_i=\{P_i\}$ $(i=1$, $2)$ 
with $\mult_{P_1}\Delta=4$ and $\mult_{P_2}\Delta=3$. 

\smallskip

\item[{$[$(4{\bi n}-7,2{\bi n}-3),{\bi n};4({\bi n}-2),7({\bi n}-2)$]_{43}$(2)}]
$n\geq 3$, 
$(a,b)=(4n-7$, $2n-3)$, $X=\F_n$, $E=(n-2)(4\sigma+4l_1+3l_2)$, 
$($$l_1$, $l_2:$ distinct fibers$)$, 
$\Delta\cap\sigma=\emptyset$, $\deg\Delta=7$, $\deg(\Delta\cap l_i)=2$ $(i=1$, $2)$, 
$|\Delta|\cap l_1=\{P_{11}$, $P_{12}\}$ with $\mult_{P_{1j}}\Delta=2$ $(j=1$, $2)$ and 
$|\Delta|\cap l_2=\{P_2\}$ with $\mult_{P_2}\Delta=3$. 

\smallskip

\item[{$[$(15,9),3;12,21$]_{5\infty 1}$}]
$(a,b)=(15,9)$, $X=\F_3$, $E=7\sigma+5\sigma_\infty+6l$, 
$\deg\Delta=7$, $\Delta\cap\sigma=\emptyset$, $\deg(\Delta\cap l)=2$ and 
$\mult_P\Delta=\mult_P(\Delta\cap\sigma_\infty)=6$ 
for $P=\sigma_\infty\cap l$.

\smallskip

\item[{$[$(5,3),3;4,7$]_{2\infty 1}$}]
$(a,b)=(5,3)$, $X=\F_3$, $E=2\sigma+2\sigma_\infty+l$, 
$\deg\Delta=7$, $\deg(\Delta\cap l)=2$, 
$\deg(\Delta\cap\sigma_\infty)=6$ and 
$\mult_P\Delta=\mult_P(\Delta\cap l)=2$ 
for $P=\sigma_\infty\cap l$.
\end{description}

The symbol {\rm$[$\textbf{({\bi a,b}),{\bi e}}$]$} indicates that 
the corresponding triplet is a normalized $(a, b)$-fundamental triplet and 
$X\simeq\pr^2$, $E\sim el$ $(l$ is a line$)$. 
The subscripts ${}_0$, ${}_{\times st}$ have the following meaning: 

\begin{itemize}
\item[{${}_0:$}]
$|E|$ is a line.
\item[{${}_{\times st}:$}]
$|E|$ is the union of two lines $l_1$ and $l_2$ and the ratio $\coeff_{l_1}E:\coeff_{l_2}E$ 
is equal to $s:t$.
\end{itemize}

The symbol {\rm$[$\textbf{({\bi a,b}),{\bi n};{\bi d,e}}$]$} indicates that 
the corresponding triplet is a normalized $(a, b)$-fundamental triplet and 
$X\simeq\F_n$, $E\sim d\sigma+el$.
The subscripts ${}_1$, ${}_{st}$, ${}_{stu}$, ${}_{r\infty}$, ${}_{r\infty 1}$, ${}_{r\infty st}$ 
have the following meaning:

\begin{itemize}
\item[{${}_1:$}]
$|E|$ is the union of $\sigma$ and $l$.
\item[{${}_{st}:$}]
$|E|$ is the union of $\sigma$ and two fibers $l_1$ and $l_2$. The ratio 
$\coeff_{l_1}E:\coeff_{l_2}E$ is equal to $s:t$.
\item[{${}_{stu}:$}]
$|E|$ is the union of $\sigma$ and three fibers $l_1$, $l_2$ and $l_3$. The ratio 
$\coeff_{l_1}E:\coeff_{l_2}E:\coeff_{l_3}E$ is equal to $s:t:u$.
\item[{${}_{r\infty}:$}]
$|E|$ is the union of $\sigma$ and $\sigma_\infty$ such that $\coeff_{\sigma_\infty}E=r$.
\item[{${}_{r\infty 1}:$}]
$|E|$ is the union of $\sigma$, $\sigma_\infty$ and $l$ such that  
$\coeff_{\sigma_\infty}E=r$.
\item[{${}_{r\infty st}:$}]
$|E|$ is the union of $\sigma$, $\sigma_\infty$ and two fibers $l_1$ and $l_2$ 
such that $\coeff_{\sigma_\infty}E=r$. The ratio 
$\coeff_{l_1}E:\coeff_{l_2}E$ is equal to $s:t$.
\end{itemize}
\end{thm}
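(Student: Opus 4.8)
The plan is to first pin down the numerical class of the fundamental divisor $L$ (hence of $E\sim-aK_X-bL$), then prove that every irreducible component of $E$ is a nonsingular rational curve of a very restricted type, and finally enumerate the finitely many resulting configurations of $E$ together with $\Delta$. Throughout I would freely use $1/2<b/a<1$, i.e.\ $0<a-b<b$ and $a<2b$.

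First I would invoke Lemma \ref{triplet_lem} \eqref{triplet_lem5} to reduce to $X=\pr^2$ or $X=\F_n$, and Lemma \ref{triplet_lem} \eqref{triplet_lem1} to record that $L$ is nef and big, that $K_X+L$ is nef, and that $(K_X+L\cdot L)>0$. These positivity conditions severely constrain $L$. On $\pr^2$, with $L=dl$ one gets $d\geq 4$ from $(K_X+L\cdot L)=(d-3)d>0$, while effectivity of $E\sim(3a-bd)l$ forces $3a-bd\geq 1$, hence $d\leq 5$; the relation $(a-b)\deg\Delta=(L\cdot E)$ from Lemma \ref{triplet_lem} \eqref{triplet_lem3} then typically rules out $d=5$ by integrality. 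On $\F_n$, writing $L=\alpha\sigma+\beta l$ and $E=(2a-b\alpha)\sigma+(a(n+2)-b\beta)l$, nefness of $K_X+L$ gives $\alpha\geq 2$ together with a lower bound on $\beta$, and I would split the analysis according to whether $K_X+L$ is big or not. In the non-big case $(K_X+L)^2=0$ forces $K_X+L$ onto the boundary of the nef cone, so either $\alpha=2$ (i.e.\ $\coeff_\sigma E=2(a-b)$) or the $\sigma_\infty$-edge occurs; this is exactly when the extra conditions of Definition \ref{fund_dfn} \eqref{fund_dfn7} apply, and it is these that produce the families parametrized by $n$.

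The crux is to show that each component $C\leq E$ is nonsingular and is one of the expected curves (a line on $\pr^2$; one of $\sigma$, $\sigma_\infty$, or a fiber on $\F_n$). Here I would use that the strict transform $C_M$ is a component of the associated basic divisor $E_M$, hence by Lemma \ref{mds_lem} \eqref{mds_lem1} a nonsingular rational $(-k)$-curve with $k\geq 2$ and $(L_M\cdot C_M)=0$. Translating through Proposition \ref{sing_prop} \eqref{sing_prop1}, the genus formula gives $\sum_i m_i(m_i-1)=2p_a(C)$, and $(L_M\cdot C_M)=0$ gives $(L\cdot C)=\sum_i m_i$, for the multiplicity sequence $(m_i)$ of $C$ along the elimination $\phi$ of $\Delta$. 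Comparing $\sum_i m_i$ with the explicit value of $(L\cdot C)$ on $X$ — namely $d\deg C$ on $\pr^2$, or the intersection numbers with $\sigma$ and $l$ on $\F_n$, whose multiplicities are controlled by Proposition \ref{toric_prop} and Corollary \ref{twocurve_cor} — forces $p_a(C)=0$ and bounds $\deg C$, thereby excluding higher-degree or singular components. I expect this to be the main obstacle, both because it is the conceptual heart of the argument (it is the strategy announced in the introduction) and because it requires careful bookkeeping of how the $(\nu1)$-scheme $\Delta$ can meet a prospective component.

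Once the components of $E$ are known to be lines, sections, or fibers, only finitely many incidence patterns remain, and I would enumerate them. For each pattern I would reconstruct $\Delta$ point by point using the local numerical constraints of Lemma \ref{ex_lem}: at a point $P$ lying on a single component one has $ml=(a-b)k$ and $(a-b)(k-l)<a$, while at a node $P=E_1\cap E_2$ one has $m_1l_1+m_2l_2=(a-b)k$ and $(a-b)(k-l_2)<a$ after normalizing $l_1=1$; the coefficient computations of Examples \ref{Sit1} and \ref{Sit2} then guarantee that the resulting $E_M$ has coefficients in $\{1,\dots,a-1\}$. Imposing in addition the $(-1)$-curve condition of Definition \ref{fund_dfn} \eqref{fund_dfn4} (relevant only on $\F_1$) and the section conditions \eqref{fund_dfn7} on $\F_n$, and finally the normalization requirement that no common divisor $t\geq 2$ of $a$ and $b$ divide every coefficient of $E$, should leave precisely the multi-indices $(a,b)$ and divisors $E$ tabulated in the statement.
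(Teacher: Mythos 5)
Your plan follows essentially the same route as the paper's proof: reduction to $X=\pr^2$ or $\F_n$ via Lemma \ref{triplet_lem} \eqref{triplet_lem5}, numerical constraints on $L$ and $E$ from nefness of $K_X+L$, exclusion of singular or high-degree components of $E$ via multiplicity sequences and the genus/intersection formulas of Proposition \ref{sing_prop} together with Proposition \ref{toric_prop} and Corollary \ref{twocurve_cor} (this is precisely the content of the paper's Claims \ref{mult_claim}, \ref{big_curve_claim} and \ref{small_possible_claim}), and then enumeration of the incidence patterns through the local conditions of Examples \ref{Sit1} and \ref{Sit2} and Lemma \ref{ex_lem}, with Lemma \ref{ex_lem} also certifying that each listed type really is a normalized fundamental triplet. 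One minor slip: on $\pr^2$ the case $L\sim 5l$ is not ruled out by integrality of $(a-b)\deg\Delta=(L\cdot E)$ (e.g.\ $(a,b)=(12,7)$ passes that test) but by your own component comparison --- a line $l\leq E$ forces $\deg(\Delta\cap l)=(L\cdot l)=5\leq\deg\Delta$, which yields $b/a\leq 1/2$ --- exactly as in the paper's Claim \ref{mult_claim}.
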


\begin{thm}\label{mainthm2}
Let $a$, $b$ be positive integers with $1/2=b/a$ and $b>1$. 
The normalized $(a, b)$-fundamental triplets $(X, E, \Delta)$ are classified 
by the types defined as follows: 

\smallskip

The case $X=\pr^2:$

\begin{description}
\item[{$[$(6,3),6$]_{\times 21}$}]
$(a,b)=(6,3)$, $E=4l_1+2l_2$; $l_i$ are distinct lines, 
$\deg\Delta=8$, $|\Delta|=\{P$, $P_1\}$, 
where $P=l_1\cap l_2$, $\mult_P\Delta=\mult_P(\Delta\cap l_2)=4$, 
$P_1\in l_1\setminus\{P\}$, $\mult_{P_1}\Delta=4$ and $\mult_{P_1}(\Delta\cap l_1)=3$. 
\end{description}

The case $X=\F_3:$

\begin{description}
\item[{$[$(6,3),3;6,12$]_{2\infty 11}$}]
$(a,b)=(6,3)$, $E=4\sigma+2\sigma_\infty+3l_1+3l_2$ 
$($$l_1$, $l_2:$ distinct fibers$)$, 
$\deg\Delta=8$, $\Delta\cap\sigma=\emptyset$, 
$\deg(\Delta\cap\sigma_\infty)=6$, $\deg(\Delta\cap l_i)=2$ and $\mult_{P_i}\Delta=
\mult_{P_i}(\Delta\cap\sigma_\infty)=3$ for $P_i=\sigma_\infty\cap l_i$ $(i=1$, $2)$. 

\smallskip

\item[{$[$(10,5),3;10,20$]_{4\infty 53}$}]
$(a,b)=(10,5)$, $E=6\sigma+4\sigma_\infty+5l_1+3l_2$ 
$($$l_1$, $l_2:$ distinct fibers$)$, 
$\deg\Delta=8$, $\Delta\cap\sigma=\emptyset$, 
$\deg(\Delta\cap\sigma_\infty)=6$, $\deg(\Delta\cap l_i)=2$ 
and $\mult_{P_1}\Delta=\mult_{P_1}(\Delta\cap\sigma_\infty)=5$, 
$\mult_{P_2}\Delta=\mult_{P_2}(\Delta\cap l_2)=2$ for 
$P_i=\sigma_\infty\cap l_i$ $(i=1$, $2)$. 

\smallskip

\item[{$[$(4,2),3;4,8$]_{2\infty 11}$}]
$(a,b)=(4,2)$, $E=2\sigma+2\sigma_\infty+l_1+l_2$ 
$($$l_1$, $l_2:$ distinct fibers$)$, 
$\deg\Delta=8$, 
$\deg(\Delta\cap\sigma_\infty)=6$, 
$|\Delta|\cap l_i=\{P_i\}$ with $P_i=\sigma_\infty\cap l_i$ and 
$\mult_{P_i}\Delta=\mult_{P_i}(\Delta\cap l_i)=2$ $(i=1$, $2)$.
\end{description}

The meanings of the symbols of the types are same as the meaning given in 
Theorem \ref{mainthm1}.
\end{thm}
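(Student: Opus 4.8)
The plan is to run the classification machinery already developed for Theorem \ref{mainthm1}, specialized to the boundary value $a=2b$ (so that $a-b=b$), tracking the few places where the strict inequality $1/2<b/a$ was previously used. By Lemma \ref{triplet_lem}\eqref{triplet_lem5} we have $X\simeq\pr^2$ or $X\simeq\F_n$, the fundamental divisor $L$ is nef and big, $K_X+L$ is nef, and $(K_X+L\cdot L)>0$. Following the strategy outlined in the introduction, I would: first pin down the divisor class of $L$, hence of $E$ via $bL\sim-aK_X-E$, from the nef, positivity, and effectivity constraints; next exclude singular components of $E$ using the bookkeeping of Section \ref{singcurve_section}; then use the multiplicity and intersection identities of Lemma \ref{triplet_lem} together with the local conditions of Lemma \ref{ex_lem} to determine $\Delta$ and the coefficients of $E$; and finally impose the normalized condition and the hypothesis $b>1$.

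In the case $X=\pr^2$, writing $L\sim dl$ for the line class $l$ gives $E\sim b(6-d)l$ because $a=2b$. Nefness together with $(K_{\pr^2}+L\cdot L)>0$ forces $d\geq 4$, while effectivity of the nonzero divisor $E$ forces $d\leq 5$, so $d\in\{4,5\}$. At each point $P\in\Delta$ the bound $\mult_PE\geq a-b=b$ (Lemma \ref{triplet_lem}\eqref{triplet_lem2}) is a strong constraint on a plane curve of degree at most $2b$, forcing its components to be lines through the points of $\Delta$ (here Definition \ref{fund_dfn}\eqref{fund_dfn4} is vacuous since $\pr^2$ has no $(-1)$-curve). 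Combining this with $(a-b)\deg\Delta=(L\cdot E)$ and the local conditions of Lemma \ref{ex_lem}\eqref{ex_lem8} isolates the configuration $[(6,3),6]_{\times 21}$; I expect the would-be $d=5$ family to survive only for $b=1$, which is excluded.

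In the case $X=\F_n$, writing $L\sim s\sigma+tl$ gives $E\sim b(4-s)\sigma+b(2n+4-t)l$. The key subdivision is whether $K_X+L$ is big, and at the boundary value the non-big subcase genuinely occurs: all three listed $\F_3$ configurations have $L\sim 2\sigma+6l$, so $K_X+L\sim l$ is nef but not big. This is exactly the regime governed by the section-at-infinity conditions of Definition \ref{fund_dfn}\eqref{fund_dfn7}, which must be invoked to force $n=3$ and to arrange $E$ into the $\sigma+\sigma_\infty$ configurations. After excluding singular components and bounding multiplicities along fibers and sections via Proposition \ref{toric_prop}, the local conditions of Lemma \ref{ex_lem} together with the constraint $\coeff E\subset\{1,\dots,2b-1\}$ and the normalized condition single out precisely $[(6,3),3;6,12]_{2\infty 11}$, $[(10,5),3;10,20]_{4\infty 53}$, and $[(4,2),3;4,8]_{2\infty 11}$.

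The main obstacle is the completeness and exclusion part of the case analysis, not the verification that the listed triplets are fundamental (that is a direct check against Lemma \ref{ex_lem}). The delicate points are two: excluding singular or higher-genus components of $E$ uniformly in $(a,b)$, which is where the arithmetic-genus and intersection formulas of Proposition \ref{sing_prop} and Corollary \ref{twocurve_cor} do the work; and, on $\F_n$, controlling the non-big subcase where Definition \ref{fund_dfn}\eqref{fund_dfn7} interacts with the bound $\coeff E\subset\{1,\dots,2b-1\}$ and the normalized condition to force simultaneously $n=3$ and the small admissible values of $b$ appearing in the list.
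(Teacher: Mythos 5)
Your overall strategy is the same as the paper's (reduce to normalized fundamental triplets, split into $X=\pr^2$ and $X=\F_n$, subdivide the latter by bigness of $K_X+L$, and kill residual families using normalization and $b>1$), but two steps are genuinely flawed or missing. First, your key exclusion step on $\pr^2$ is false: the bound $\mult_PE\geq a-b=b$ does \emph{not} force the components of $E$ to be lines. For a nonsingular conic $C$ and any $\Delta\subset C$ with $\deg\Delta=8$, the triplet $(\pr^2, bC, \Delta)$ satisfies $\mult_P(bC)=b$ at every $P\in\Delta$ and is an honest $(2b,b)$-fundamental triplet (the paper verifies exactly this in the case $(h,k,b/a)=(4,8,1/2)$). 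It is excluded only because it is not normalized: one first shows by the multiplicity-sequence/genus computation (Claim \ref{mult_claim}) that any curve $C$ with $(L_M\cdot\phi^{-1}_*C)=0$ is a line or a conic, then uses the uniqueness statement of Lemma \ref{triplet_lem} \eqref{triplet_lem6} to force $E=bC$ whenever a conic component occurs, and only then invokes normalization, i.e.\ $\coeff(E/b)\not\subset\Z$. The same phenomenon recurs on $\F_n$: nonsingular members of $|\sigma+l|$ on $\F_0$, of $|2\sigma+2l|$ on $\F_1$, of $|\sigma+4l|$ on $\F_3$, and sections at infinity on $\F_1$, $\F_2$, $\F_4$ all give genuine but non-normalized triplets, so neither the ``exclude singular components'' bookkeeping of Section \ref{singcurve_section} nor Proposition \ref{toric_prop} can carry this part of the argument.

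Second, your $\F_n$ discussion silently assumes that the branch where $K_X+L$ is big contributes nothing at $b/a=1/2$. That is true, but it is a substantial portion of the proof you cannot skip: Claim \ref{big_possible_claim} \eqref{big_possible_claim2} leaves nine candidates $(n,h,k)$, namely $(0,3,6)$, $(0,4,4)$, $(1,5,5)$, $(1,6,3)$, $(2,6,6)$, $(2,7,4)$, $(2,8,2)$, $(3,9,3)$, $(3,10,1)$, and each requires an individual exclusion (for instance $(0,3,6)$ needs an incidence argument for $\Delta$ against two fibers and a minimal section, while $(1,5,5)$ and $(2,6,6)$ need the section-at-infinity analysis plus Lemma \ref{triplet_lem} \eqref{triplet_lem6}). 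You likewise omit the exceptional non-big case $(n,h,b/a)=(1,4,1/2)$ of Claim \ref{sigma_empty_claim}, which again is eliminated only through the conic-type uniqueness argument. Finally, a smaller inaccuracy: in the surviving non-big branch, $n=3$ is not forced by Definition \ref{fund_dfn} \eqref{fund_dfn7}; it comes from the multiplicity-sequence estimate of Claim \ref{small_possible_claim} \eqref{small_possible_claim2} with $k=8$, after the possibilities $(n,k)=(4,8)$ and $C\sim\sigma+4l$ are removed by normalization, and Definition \ref{fund_dfn} \eqref{fund_dfn7} enters only afterwards, to give $\coeff_{\sigma_\infty}E\leq\coeff_\sigma E$ and hence the coefficient bound $d\leq t$ used in the final subdivision into the three $\F_3$ types.
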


As immediate corollaries, we have the following results.

\begin{corollary}\label{maincor}
We have
\[
\FS_2\cap\left(1/2, 1\right)=\left\{\frac{2s+t}{4s+t}\,\,
\bigg|\,\, s\in\Z_{>0},\,\, t\in\{4,5,6\}\right\}, 
\]
where $\FS_2$ is the set of all the fractional indices of log del Pezzo surfaces. 
\end{corollary}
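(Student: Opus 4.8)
The plan is to deduce the corollary directly from the classification in Theorem~\ref{mainthm1} via the dictionary between log del Pezzo surfaces and fundamental triplets. First I would recall that by Proposition~\ref{dP-basic_prop} every log del Pezzo surface $S$ of multi-index $(a,b)$ gives an $(a,b)$-basic pair and conversely, and that by Proposition~\ref{triplet_prop} together with the associated-pair construction of Definition~\ref{fund_dfn} the $(a,b)$-basic pairs correspond to the $(a,b)$-fundamental triplets; in all of these correspondences the quantity $b/a$ is preserved and equals $r(S)$. In particular each normalized triplet in the list produces, over the given field $\Bbbk$, a genuine log del Pezzo surface with that fractional index, so realization is automatic. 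Consequently
\[
\FS_2\cap(1/2,1)=\{\,b/a \mid (X,E,\Delta)\text{ a normalized }(a,b)\text{-fundamental triplet, }1/2<b/a<1\,\}.
\]
Since every triplet in Theorem~\ref{mainthm2} has $b/a=1/2$, those contribute nothing to the open interval, so only the types of Theorem~\ref{mainthm1} matter and the task reduces to computing the set of rationals $b/a$ over that list and identifying it with $T:=\{(2s+t)/(4s+t)\mid s\in\Z_{>0},\,t\in\{4,5,6\}\}$.

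The whole computation rests on the elementary identity $\gcd(2s+t,4s+t)=\gcd(2s,t)$, which lets me reduce every fraction to lowest terms. I would first observe $T\subseteq(1/2,1)$, since $2s+t<4s+t<2(2s+t)$. Next I would reduce the ratio of each of the roughly thirty types; after cancellation they collapse into exactly four one-parameter families:
\[
\text{(I)}\ \frac{k}{2k-3}\ (k\ge 4,\ 3\nmid k),\quad
\text{(II)}\ \frac{j}{2j-1}\ (j\ge 2),\quad
\text{(III)}\ \frac{N}{2N-5}\ (N\text{ odd},\ N\ge 7,\ 5\nmid N),\quad
\text{(IV)}\ \frac{P}{2(P-1)}\ (P\text{ odd},\ P\ge 3).
\]
Family (I) comes from the types with $(a,b)=(2n-1,n+1)$ (and absorbs the sporadic values $7/11$ and $5/7$), family (III) from $(a,b)=(4n-3,2n+1)$, and family (IV) from $(a,b)=(2n-2,n)$; family (II) absorbs all remaining types, namely those with $(a,b)=(2m+1,m+1)$, $(4m+1,2m+1)$, $(2n-3,n-1)$ and $(4n-7,2n-3)$, together with the sporadic values $2/3,\,3/5,\,4/7,\,5/9,\,7/13,\,11/21$. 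The arithmetic point I would emphasize is that the side conditions $3\nmid n-2$, $5\nmid n-2$ and $2\nmid n$ attached to those types translate, under the substitutions $k=n+1$, $N=2n+1$ and $P=n$, exactly into the congruence restrictions displayed in (I), (III) and (IV).

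Finally I would identify $(\mathrm{I})\cup(\mathrm{II})\cup(\mathrm{III})\cup(\mathrm{IV})$ with $T$ by splitting $T$ according to $t$ and applying the gcd identity: the $t=6$ part is $\{k/(2k-3):k\ge 4\}$, which decomposes into (I) for $3\nmid k$ and $\{k/(2k-3):3\mid k\}=\{j/(2j-1):j\ge 2\}=$ (II); the $t=4$ part is $\{P/(2(P-1)):P\ge 3\}$, which decomposes into (IV) for $P$ odd and $\{w/(2w-1):w\ge 2\}\subseteq$ (II) for $P$ even; and the $t=5$ part is $\{N/(2N-5):N\text{ odd},\,N\ge 7\}$, which decomposes into (III) for $5\nmid N$ and a subfamily of (II) for $5\mid N$. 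Reading these identities in both directions gives $T=(\mathrm{I})\cup(\mathrm{II})\cup(\mathrm{III})\cup(\mathrm{IV})$, which is the asserted equality. I expect the only genuine difficulty to lie in the middle step: reducing each listed fraction to lowest terms without error, verifying that the congruence conditions of Theorem~\ref{mainthm1} match the splittings of $T$ precisely so that nothing is double-counted or omitted, and confirming that no type produces a value lying outside $T$.
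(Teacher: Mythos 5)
Your proposal is correct and follows exactly the route the paper intends: the paper states this corollary as an immediate consequence of Theorem~\ref{mainthm1} via the correspondences of Propositions~\ref{dP-basic_prop} and~\ref{triplet_prop} (with realization of each type guaranteed by Lemma~\ref{ex_lem}), and your reduction of the listed ratios to the four families $k/(2k-3)$ $(3\nmid k)$, $j/(2j-1)$, $N/(2N-5)$ $(N$ odd, $5\nmid N)$, $P/(2(P-1))$ $(P$ odd$)$ and their identification with the sets $t=6,5,4$ is arithmetically sound. The only cosmetic point is that your ``namely'' list omits the types with $(a,b)=(4n-5,2n-1)$ and $(4n-6,2n-2)$, but their ratios fall into families (I) and (II) respectively, so the union is unaffected.
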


\begin{corollary}\label{235_cor}
\begin{enumerate}
\renewcommand{\theenumi}{\arabic{enumi}}
\renewcommand{\labelenumi}{(\theenumi)}
\item\label{235_cor1}
Consider the log del Pezzo surface $S$ corresponding to a fundamental triplet of type 

\rm{\textbf{$[$(4{\bi n}-6,2{\bi n}-2),{\bi n};4({\bi n}-2),6({\bi n}-2)$]_{11}$}}, 

\rm{\textbf{$[$(15,9),3;12,21$]_{5\infty 1}$}}, 
\rm{\textbf{$[$(6,3),6$]_{\times 21}$}}, 
\rm{\textbf{$[$(6,3),3;6,12$]_{2\infty 11}$}}, 

\rm{\textbf{$[$(10,5),3;10,20$]_{4\infty 53}$}} or 
\rm{\textbf{{$[$(4,2),3;4,8$]_{2\infty 11}$}}}. 

Let $a_0$, $b_0$ be the positive integers such that $r(S)=b_0/a_0$ and 
$\gcd(a_0, b_0)=1$. 
Then $-a_0K_S\not\sim b_0L_S$. 
\item\label{235_cor2}
Let $a$, $b$ be positive integers with $1/2\leq b/a<1$. Take an arbitrary log del Pezzo 
surface $S$ with $r(S)=b/a$. Then one of $-2aK_S$, $-3aK_S$ or $-5aK_S$ 
is a Cartier divisor. 
\end{enumerate}
\end{corollary}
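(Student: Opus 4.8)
The plan is to deduce both statements directly from the classification, using only the dictionary that a \emph{normalized} $(a,b)$-fundamental triplet produces a log del Pezzo surface whose \emph{normalized} multi-index is exactly $(a,b)$. The one quantity that drives everything is the greatest common divisor $d:=\gcd(a,b)$ of the normalized multi-index, which is read off each type in Theorems \ref{mainthm1} and \ref{mainthm2}. Concretely, since every listed triplet $(X,E,\Delta)$ is normalized, its associated $(a,b)$-basic pair is normalized by Lemma \ref{normal_lem}, and then Proposition \ref{dP-basic_prop} \eqref{dP-basic_prop2} shows that the resulting $S$ has normalized multi-index $(a,b)$; so $(a,b)$ in the tables \emph{is} the normalized multi-index.

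For \eqref{235_cor1} I would compute $d=\gcd(a,b)$ for the six types. Using $\gcd(2n-3,n-1)=1$ one gets $d=2$ for $[(4n-6,2n-2),n;4(n-2),6(n-2)]_{11}$, while directly $d=2$ for $[(4,2),3;4,8]_{2\infty11}$, $d=3$ for $[(15,9),3;12,21]_{5\infty1}$, $[(6,3),6]_{\times21}$ and $[(6,3),3;6,12]_{2\infty11}$, and $d=5$ for $[(10,5),3;10,20]_{4\infty53}$; in every case $d>1$. Writing $(a_0,b_0)=(a/d,b/d)$ we have $a_0<a$. If $-a_0K_S\sim b_0L_S$ held, then $(a_0,b_0)$ would be a multi-index of $S$ in the sense of Definition \ref{multiind_dfn} with strictly smaller first entry, contradicting the minimality in the definition of the normalized multi-index. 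Hence $-a_0K_S\not\sim b_0L_S$.

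For \eqref{235_cor2}, let $(a_*,b_*)$ be the normalized multi-index of $S$ and set $d:=\gcd(a_*,b_*)$. Scanning every type in Theorems \ref{mainthm1} and \ref{mainthm2}, together with the case $(a_*,b_*)=(2,1)$ (where $d=1$) treated by Nakayama, shows $d\in\{1,2,3,5\}$; the infinite families carrying the side conditions $3\nmid n-2$, $5\nmid n-2$ or $2\nmid n$ are precisely those forced to be coprime, and the only non-coprime normalized multi-indices are the six types of \eqref{235_cor1}. Now write $r(S)=p/q$ in lowest terms, so that $a_*=dq$ and $b_*=dp$. For the given $a$ with $b/a=r(S)$ one has $q\mid a$, say $a=\lambda q$ with $\lambda\in\Z_{>0}$; then $da=\lambda a_*$, and multiplying the linear equivalence $-a_*K_S\sim b_*L_S$ by $\lambda$ gives $-daK_S\sim\lambda b_*L_S$, which is Cartier since $L_S$ is Cartier. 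If $d\in\{2,3,5\}$ this is exactly one of $-2aK_S$, $-3aK_S$, $-5aK_S$; if $d=1$ then $-aK_S$ is Cartier, hence so is $-2aK_S$. This yields \eqref{235_cor2}.

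The only genuine obstacle is the exhaustive check that the gcd of the normalized multi-index always lies in $\{1,2,3,5\}$: this is where the completeness of Theorems \ref{mainthm1} and \ref{mainthm2} is indispensable, and where one must be careful that the coprimality hypotheses attached to the infinite families are exactly what forces $d=1$, leaving only the six non-coprime cases of \eqref{235_cor1}. Everything surrounding it is the elementary divisibility bookkeeping above.
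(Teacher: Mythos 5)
Your proposal is correct and is essentially the paper's own argument: the paper gives no separate proof, presenting Corollary \ref{235_cor} as an immediate consequence of Theorems \ref{mainthm1} and \ref{mainthm2}, and your route --- normalized triplet $\Rightarrow$ normalized basic pair $\Rightarrow$ normalized multi-index (Lemma \ref{normal_lem}, Proposition \ref{dP-basic_prop}), then gcd bookkeeping and the minimality of the normalized multi-index --- is exactly that reading made explicit. One parenthetical gloss is inaccurate, namely that the families with side conditions $3\nmid n-2$, $5\nmid n-2$, $2\nmid n$ are ``precisely those forced to be coprime'': the family $[(4n-6,2n-2),n;4(n-2),6(n-2)]_{11}$ carries the condition $2\nmid n$ yet has $\gcd(4n-6,2n-2)=2$; but since you list this family among the six non-coprime types and compute its gcd correctly, the type-by-type scan and hence both parts of the proof are unaffected.
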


We start to prove Theorems \ref{mainthm1} and \ref{mainthm2}. 
We remark that any of the triplet in Theorems \ref{mainthm1} and \ref{mainthm2} 
is a normalized $(a, b)$-fundamental triplet by Lemma \ref{ex_lem}.
Let $a$, $b$ be positive integers with $1/2\leq b/a<1$. 
Let $(X, E, \Delta)$ be an $(a, b)$-fundamental triplet,
$L$ be the fundamental divisor of $(X, E, \Delta)$, 
$\phi\colon M\to X$ be the elimination of $\Delta$, 
$E_M:=E_M^{\Delta,a-b}$, $L_M:=L_M^{\Delta,1}$ 
and let $k:=\deg\Delta$. 
We may assume that if $a=2b$ then $\coeff(E/b)\not\subset\Z$ by 
Lemma \ref{normal_lem} and \cite{N}. 
By Lemma \ref{ex_lem} \eqref{ex_lem1}, $X$ is isomorphic to 
either $\pr^2$ or $\F_n$.

\subsection{The case $X=\mathbb{P}^2$}\label{p2_section}

We consider the case $X=\pr^2$. 
Let $e$ and $h$ be the positive integers determined by $E\sim el$ and $L\sim hl$, 
where $l$ is a line on $X$. Then we have 
$e=3a-hb$, $h\geq 4$ and $eh=(a-b)k$. 
Since $e\geq 1$ and $1/2\leq b/a$, we have $h<6$. Thus $h=4$ or $5$.

\begin{claim}\label{mult_claim}
\begin{enumerate}
\renewcommand{\theenumi}{\arabic{enumi}}
\renewcommand{\labelenumi}{(\theenumi)}
\item\label{mult_claim01} 
The triplet $(h,k,b/a)$ is one of 
$(5,5,1/2),(4,4,2/3)$, $(4,5,7/11)$, $(4,6,3/5)$, $(4,7,5/9)$ or $(4,8,1/2)$.
\item\label{mult_claim02}
Take any irreducible curve $C$ on $X$ such that $(L_M\cdot\phi^{-1}_*C)=0$. 
$($We note that any component of $E$ satisfies this condition.$)$
If $(h,k,b/a)=(4,8,1/2)$, then $C$ is either a conic or a line. 
If $(h,k,b/a)\neq(4,8,1/2)$, then $C$ is a line. 
\end{enumerate}
\end{claim}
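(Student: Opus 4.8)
The plan is to split the claim into its numerical part \eqref{mult_claim01} and its geometric part \eqref{mult_claim02}, and to feed the output of the first into the second.

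First I would prove \eqref{mult_claim01}. At this point I already have $e=3a-hb$, $eh=(a-b)k$ and $h\in\{4,5\}$, so the only missing ingredient is a lower bound on $e$. Since $\Delta$ is nonempty I choose a point $P\in\Delta$; Lemma \ref{triplet_lem} \eqref{triplet_lem2} gives $\mult_PE\geq a-b$, whereas any plane curve of degree $e$ satisfies $\mult_PE\leq e$. Hence $e\geq a-b$, equivalently $b/a\leq 2/(h-1)$. Solving $eh=(a-b)k$ for the ratio gives $b/a=(3h-k)/(h^2-k)$, and imposing $1/2\leq b/a<1$ together with $b/a\leq 2/(h-1)$ forces $k\in\{4,5,6,7,8\}$ when $h=4$ and $k=5$ when $h=5$. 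Substituting back recovers exactly the six listed triples; this step is pure bookkeeping once the bound $e\geq a-b$ is in hand.

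For \eqref{mult_claim02} I would argue by intersection theory on $M$. Writing $d:=\deg C$ and letting $(m_1,\dots,m_k)$ be the multiplicity sequence of $C$ with respect to $\phi$ (of length $k=\deg\Delta$), the projection formula together with Proposition \ref{sing_prop} \eqref{sing_prop12} give
\[
0=(L_M\cdot C_M)=(\phi^*L\cdot C_M)-(K_{M/X}\cdot C_M)=hd-\sum_{i=1}^k m_i,
\]
so $\sum_i m_i=hd$. The inequality $p_a(C_M)\geq 0$ combined with Proposition \ref{sing_prop} \eqref{sing_prop11} yields $\sum_i m_i(m_i-1)\leq(d-1)(d-2)$, hence $\sum_i m_i^2\leq(d-1)(d-2)+hd$; on the other hand Cauchy--Schwarz gives $\sum_i m_i^2\geq(\sum_i m_i)^2/k=h^2d^2/k$. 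Combining the two estimates produces
\[
(h^2-k)d^2-k(h-3)d-2k\leq 0,
\]
into which I would substitute the finitely many admissible pairs $(h,k)$ from \eqref{mult_claim01}. In every case this forces $d\leq 1$ except for $(h,k)=(4,8)$, where the bound only gives $d\leq 2$; thus $C$ is a line in general and a line or a conic precisely when $(h,k,b/a)=(4,8,1/2)$. The parenthetical remark that every component of $E$ satisfies the hypothesis is immediate from the basic-pair condition Definition \ref{basic_dfn} \eqref{basic_dfn4}, which gives $(L_M\cdot E_{0,M})=0$ for each component $E_0\leq E$.

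I expect the main obstacle to be the sharpness of the quadratic estimate in \eqref{mult_claim02}: the combination of the genus inequality with Cauchy--Schwarz must be tight enough to separate the exceptional triple $(4,8,1/2)$ --- where a conic through eight reduced points of $\Delta$ realizes equality in both inputs --- from all the others, in which $d=2$ has to be excluded. The numerical part \eqref{mult_claim01}, by contrast, is routine once one notices the bound $e\geq a-b$ coming from $\mult_PE\geq a-b$.
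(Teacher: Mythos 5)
Your proposal is correct, but it reorganizes the argument in a way that genuinely differs from the paper's at two points. For part (1) the paper never invokes the bound $\mult_P E\geq a-b$: instead it takes an arbitrary curve $C$ with $(L_M\cdot\phi^{-1}_*C)=0$ (one exists since $E\neq 0$), and from the constraints $0\leq m_i\leq m$ and $\sum_i m_i=hm$ on its multiplicity sequence deduces $h\leq k$, which combined with $(3a-hb)h=(a-b)k$ yields the same inequality $2a\geq (h-1)b$ that you obtain from $a-b\leq\mult_PE\leq e$. Thus in the paper parts (1) and (2) run through one and the same curve computation, whereas you decouple the numerology from the curve geometry entirely, using only Lemma \ref{triplet_lem} \eqref{triplet_lem2} and the nonemptiness of $\Delta$. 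For part (2) your quadratic $(h^2-k)d^2-k(h-3)d-2k\leq 0$ is exactly the paper's $f(m)\leq 0$ (for $h=4$, $f(x)=(16-k)x^2-kx-2k$), but you feed into it only the inequality $p_a(C_M)\geq 0$, valid for every reduced irreducible curve, while the paper uses the equality $p_a(C_M)=0$: it first argues that $C$ is rational with $C_M$ smooth rational, because $C_M$ is contracted by the minimal resolution of a log terminal surface (Proposition \ref{dP-basic_prop} and \cite[\S 4]{KoMo}). Your weakening costs nothing, since the discriminant computation only needs the upper bound $\sum_i m_i(m_i-1)\leq (d-1)(d-2)$, and it buys a more elementary, self-contained argument that avoids the rationality input; your case checks ($d\leq 1$ in all admissible cases except $(h,k)=(4,8)$, where only $d\leq 2$ follows) are correct, so the worry you raise about sharpness at the end does not materialize.
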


\begin{proof}
Let $m$ be the degree of $C$ on $\pr^2$.
$C$ is a rational curve by Proposition \ref{dP-basic_prop} and \cite[\S 4]{KoMo}. 
Let $(m_1,\dots,m_k)$ be the multiplicity sequence of $C$ with respects to $\phi$. 
By Proposition \ref{sing_prop}, we have the following:
\begin{enumerate}
\renewcommand{\theenumi}{\roman{enumi}}
\renewcommand{\labelenumi}{(\theenumi)}
\item\label{mult_claim1}
$0\leq m_i\leq m$ for any $i$.
\item\label{mult_claim2}
$hm=\sum_{i=1}^km_i$.
\item\label{mult_claim3}
$m^2-3m+2=\sum_{i=1}^km_i(m_i-1)$.
\end{enumerate}

By \eqref{mult_claim1} and \eqref{mult_claim2}, we have $h\leq k$. 
Since $(3a-hb)h=(a-b)k$, we have $2a\geq(h-1)b$. 
Thus if $h=5$, then $b/a=1/2$; if $h=4$, then $b/a\leq 2/3$. 

Assume that $h=5$. 
Then $b/a=1/2$ and $k=5$ holds. By \eqref{mult_claim2}, $m=m_i$ for any $i$. 
Thus $(m-1)(m-2)=5m(m-1)$ by \eqref{mult_claim3}. Hence $m=1$ holds. 

Assume that $h=4$. We know that $b/a\in[1/2,2/3]$. Since $k=4(3a-4b)/(a-b)$, 
we have $4\leq k\leq 8$. 
On the other hand, we have
\[
m^2+m+2=\sum_{i=1}^km_i^2\geq k\Bigl(\sum_{i=1}^km_i/k\Bigr)^2=16m^2/k.
\]
Thus we have the inequality $f(m)\leq 0$, 
where $f(x):=(16-k)x^2-kx-2k$.
If $m=1$, then $4\leq k\leq 8$ since $f(1)=16-4k$. 
If $m=2$, then $k=8$ since $f(2)=64-8k$. 
The value $m$ cannot be bigger than $2$ since $f'(x)>0$ for any $x\geq 1$ (note that 
$k/(2(16-k))\leq 1/2$). 
Therefore the assertion holds since $b/a=(12-k)/(16-k)$ if $h=4$. 
\end{proof}

\begin{claim}\label{twoline_claim}
Assume that there exist two distinct lines $l_1$, $l_2\leq E$. 
We also assume that $E$ contains no nonsingular conic. 
Then $|E|$ consists of exactly two components $l_1$, $l_2$. Moreover, 
the triplet $(h,k,b/a)$ is either $(4,7,5/9)$ or $(4,8,1/2)$.
\end{claim}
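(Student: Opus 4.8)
The plan is to first determine the components of $E$, then encode their incidence with $\Delta$ as a combinatorial statement about subsets of the exceptional locus of $\phi$, and finally eliminate three-or-more lines by a counting inequality. By hypothesis $E$ has at least two distinct line components and contains no nonsingular conic. Since every irreducible component $C\le E$ satisfies $(L_M\cdot\phi^{-1}_*C)=0$ (as is already noted in the statement of Claim \ref{mult_claim}, this being the defining property \eqref{basic_dfn4} of the associated $(a,b)$-basic pair), Claim \ref{mult_claim} \eqref{mult_claim02} forces every component of $E$ to be a line or a nonsingular conic; as $E$ contains no conic, \emph{every} component is a line. Thus I may write $E=\sum_i m_i l_i$ with the $l_i$ distinct lines, and it remains to show there are exactly two of them and to pin down $(h,k,b/a)$.

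The crux is the following encoding. Recall that the elimination $\phi$ is a composition of $k=\deg\Delta$ monoidal transforms (Proposition \ref{elim_prop} \eqref{elim_prop1}), so multiplicity sequences with respect to $\phi$ are indexed by $\{1,\dots,k\}$. For a line $l_i\le E$, Lemma \ref{triplet_lem} \eqref{triplet_lem4} together with $L\sim hl$ gives $\deg(\Delta\cap l_i)=(L\cdot l_i)=h$. By Proposition \ref{sing_prop} \eqref{sing_prop2} the multiplicity sequence of the nonsingular curve $l_i$ has entries in $\{0,1\}$, hence is the indicator function of a subset $S_i\subseteq\{1,\dots,k\}$ with $|S_i|=\deg(\Delta\cap l_i)=h$. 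For two distinct lines the same proposition gives $(l_i\cdot l_j)=(l_{i,M}\cdot l_{j,M})+|S_i\cap S_j|$, and since $(l_i\cdot l_j)=1$ while $(l_{i,M}\cdot l_{j,M})\ge 0$, we obtain $|S_i\cap S_j|\le 1$ (this is exactly Corollary \ref{twocurve_cor}). In other words, the line components of $E$ correspond to $h$-element subsets of a $k$-element set that pairwise meet in at most one point.

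Now the conclusion follows by pure counting. If $E$ had three distinct line components $l_1,l_2,l_3$, then inclusion–exclusion gives
\[
k\ge|S_1\cup S_2\cup S_3|\ge\sum_{i=1}^3|S_i|-\sum_{i<j}|S_i\cap S_j|\ge 3h-3\ge 9,
\]
using $h=4$ or $5$ (established just before Claim \ref{mult_claim}), which contradicts $k\le 8$ from Claim \ref{mult_claim} \eqref{mult_claim01}. Hence $E$ has exactly the two line components $l_1,l_2$. Finally, from $2h-|S_1\cap S_2|=|S_1\cup S_2|\le k$ and $|S_1\cap S_2|\le 1$ we get $k\ge 2h-1$; intersecting this with the list in Claim \ref{mult_claim} \eqref{mult_claim01} discards $(5,5,1/2)$, $(4,4,2/3)$, $(4,5,7/11)$, $(4,6,3/5)$ and leaves precisely $(h,k,b/a)=(4,7,5/9)$ or $(4,8,1/2)$. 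I expect the only delicate point to be the bookkeeping in the encoding step—verifying that each line's multiplicity sequence is genuinely a $\{0,1\}$-indicator of size exactly $h$ and that distinct lines overlap in at most one index—after which the inclusion–exclusion estimate is immediate and is what rules out $k=8$.
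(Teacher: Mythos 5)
Your proof is correct and takes essentially the same approach as the paper's: all components of $E$ are lines by Claim \ref{mult_claim} \eqref{mult_claim02} and the no-conic hypothesis, and the counting bounds $k\geq 2h-1$ (two lines) and $k\geq 3h-3$ (a third line), compared against the list in Claim \ref{mult_claim} \eqref{mult_claim01} with $k\leq 8$, give exactly the stated conclusion. The only cosmetic difference is that you derive the overlap bound $|S_i\cap S_j|\leq 1$ from Proposition \ref{sing_prop} \eqref{sing_prop2} (equivalently Corollary \ref{twocurve_cor}), whereas the paper invokes the normalization $\mult_P(\Delta\cap l_1)\leq 1$ of Example \ref{Sit2} at the point $P=l_1\cap l_2$; the resulting inequalities are identical.
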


\begin{proof}
Let $P:=l_1\cap l_2$. We can assume that $\mult_P(\Delta\cap l_1)\leq 1$ by 
Example \ref{Sit2}. Since $\deg(\Delta\cap l_i)=(L\cdot l_i)=h$ for $i=1$, $2$, 
we have $k\geq 2h-1$. Thus $(h,k,b/a)$ is either $(4,7,5/9)$ or $(4,8,1/2)$.
Using the same argument, if there exists another line $l_3\leq E$, 
then $k\geq 3h-3$ holds. This leads to a contradiction. 
Thus $|E|=l_1\cup l_2$. 
\end{proof}

\textbf{The case $(h,k,b/a)=(5,5,1/2)$:}
In this case, we have 
$E=bl$ for some line $l$ by Claims \ref{mult_claim} and \ref{twoline_claim}.
This contradicts to the assumption $\coeff(E/b)\not\subset\Z$. 

\textbf{The case $(h,k,b/a)=(4,4,2/3)$:}
In this case, there exists a positive integer $t$ such that $a=3t$, $b=2t$ and $E=tl$ 
for some line $l$ by Claims \ref{mult_claim} and \ref{twoline_claim}.
Since $\deg(\Delta\cap l)=4$, we have $\Delta\subset l$. 
If $t=1$, then this triplet is nothing but the type \textbf{$[$(3,2),1$]_0$}.

\textbf{The case $(h,k,b/a)=(4,5,7/11)$:}
In this case, there exists a positive integer $t$ such that $a=11t$, $b=7t$ and $E=5tl$ 
for some line $l$ by Claims \ref{mult_claim} and \ref{twoline_claim}.
We note that $\deg(\Delta\cap l)=4$. 
For any $P\in\Delta$, we have $5t\mult_P(\Delta\cap l)=4t\mult_P\Delta$ by 
Example \ref{Sit1}. Thus $\mult_P(\Delta\cap l)=4$ and $\mult_P\Delta=5$ hold. 
If $t=1$, then this triplet is nothing but the type \textbf{$[$(11,7),5$]_0$}.

\textbf{The case $(h,k,b/a)=(4,6,3/5)$:}
In this case, there exists a positive integer $t$ such that $a=5t$, $b=3t$ and $E=3tl$ 
for some line $l$ by Claims \ref{mult_claim} and \ref{twoline_claim}.
We note that $\deg(\Delta\cap l)=4$. 
For any $P\in\Delta$, we have $3t\mult_P(\Delta\cap l)=2t\mult_P\Delta$ 
by Example \ref{Sit1}. 
Thus the pair $(\mult_P\Delta, \mult_P(\Delta\cap l))$ is either $(6,4)$ or $(3,2)$. 
If $|\Delta|=\{P\}$, then 
$(\deg\Delta, \deg(\Delta\cap l))=(6,4)$. 
If $t=1$, then this triplet is nothing but the type \textbf{$[$(5,3),3$]_0$(1)}.
If $|\Delta|=\{P_1$, $P_2\}$, then $(\mult_{P_i}\Delta, \mult_{P_i}(\Delta\cap l))=(3,2)$. 
If $t=1$, then this triplet is nothing but the type \textbf{$[$(5,3),3$]_0$(2)}.

\textbf{The case $(h,k,b/a)=(4,7,5/9)$:}
In this case, there exists a positive integer $t$ such that $a=9t$, $b=5t$ and 
$E\sim 7tl$. 
Assume that $E=7tl$ for some line $l$. Then for any $P\in\Delta$ we have 
$7t\mult_P(\Delta\cap l)=4t\mult_P\Delta$ by Example \ref{Sit1}.
Thus $\mult_P(\Delta\cap l)=4$ and $\mult_P\Delta=7$ hold. 
However, in this case $\coeff E_M\not\subset\{1,\dots,a-1\}$ by Example \ref{Sit1}, 
a contradiction. 
Thus there exist distinct lines $l_1$, $l_2$ and positive integers $c_1$, $c_2$ such that 
$E=c_1l_1+c_2l_2$ (hence $c_1+c_2=7t$) by Claims \ref{mult_claim} and \ref{twoline_claim}.
Let $P:=l_1\cap l_2$. If $P\not\in\Delta$, then 
$7=k\geq\sum_{i=1,2}\deg(\Delta\cap l_i)=8$, a contradiction. 
Hence $P\in\Delta$. We can assume that $\mult_P(\Delta\cap l_1)=1$. 
We have 
$4t\mult_P\Delta=c_1+c_2\mult_P(\Delta\cap l_2)$ by Example \ref{Sit2}. 
Since $3=\deg(\Delta\cap l_1\setminus\{P\})\leq\deg(\Delta\setminus l_2)\leq 7-4=3$, 
$\mult_{P_1}\Delta=\mult_{P_1}(\Delta\cap l_1)$ holds for any $P_1\in\Delta\cap l_1$ 
such that $P_1\neq P$. Thus $(c_1, c_2)=(4t, 3t)$ by Example \ref{Sit1}. 
In particular, $\mult_P\Delta=\mult_P(\Delta\cap l_2)=4$. 
If $t=1$, then this triplet is nothing but 
the type \textbf{$[$(9,5),7$]_{\times 43}$}.

\textbf{The case $(h,k,b/a)=(4,8,1/2)$:}
In this case, there exists a positive integer $t$ such that $a=2t$, $b=t$ and 
$E\sim 2tl$. 
Assume that there exists a nonsingular conic $C\leq E$. 
Then $\Delta\subset C$ since $\deg(\Delta\cap C)=8$. Conversely, for any 
nonsingular conic $C$ and any subscheme $\Delta\subset C$ 
with $\deg\Delta=8$, then the 
triplet $(X, tC, \Delta)$ is a $(2t, t)$-fundamental triplet. Thus $E$ must be equal to 
$tC$ by Lemma \ref{triplet_lem} \eqref{triplet_lem6}. This contradicts to the assumption 
$\coeff(E/b)\not\subset\Z$. 
Thus $E=c_1l_1+c_2l_2$ ($c_1$, $c_2$ positive integers with $t\nmid c_1, c_2$ 
and $c_1+c_2=2t$) 
such that $l_1$, $l_2$ are distinct lines 
by Claims \ref{mult_claim} and \ref{twoline_claim}. 
We may assume that $2t>c_1>t>c_2>0$. Let $P:=l_1\cap l_2$.
Assume that $P\not\in\Delta$. 
Then $8=\deg\Delta\geq\sum_{i=1,2}\deg(\Delta\cap l_i)=8$. 
Thus $\mult_{P_1}(\Delta\cap l_1)=\mult_{P_1}\Delta$ holds for any $P_1\in\Delta\setminus\{P\}$. Hence $c_1=t$ holds by Example \ref{Sit1}, 
a contradiction. 
Therefore $P\in\Delta$. Since $c_2<t=a-b$, we have $\mult_P(\Delta\cap l_1)=1$. 
(Indeed, $|\Delta|\cap l_2=\{P\}$ holds.) 
Moreover, $\mult_P(\Delta\cap l_2)=4$ holds. 
Thus $t\mult_P\Delta=c_1+4c_2$ holds by Example \ref{Sit2}. 
On the other hand, for any point $P_1\in\Delta\setminus\{P\}$, we have 
$c_1\mult_{P_1}(\Delta\cap l_1)=t\mult_{P_1}\Delta$ by Example \ref{Sit1}. 
Since $\mult_{P_1}\Delta\leq k-\deg(\Delta\cap l_2)=4$, we have 
$(\mult_{P_1}\Delta, \mult_{P_1}(\Delta\cap l_1))=(4, 3)$ and $(c_1, c_2)=(4t/3, 2t/3)$. 
If $t=3$, then this triplet is nothing but the type 
\textbf{$[$(6,3),6$]_{\times 21}$}.

\subsection{The case $X=\mathbb{F}_n$ with $K_X+L$ big}\label{big_section}

We consider the case $X=\F_n$ such that the divisor $K_X+L$ is big. 

Let $e_0$, $e$, $h_0$, $h$ be the nonnegative integers determined by 
$E\sim e_0\sigma+el$ and $L\sim h_0\sigma+hl$. 
We have $e_0=2a-h_0b$ and $e=(n+2)a-hb$. Since $e_0\geq 1$, 
we have $h_0\leq 3$ and $b/a<2/3$. 
Moreover, the divisor $K_X+L\sim(h_0-2)\sigma+(h-(n+2))l$ is nef 
and big. Thus $h_0=3$ and $\max\{3n, 2n+2\}\leq h\leq(n+2)a/b$ holds. 
Since $k(a-b)=(L\cdot E)=(-3n+6+2h)a+(9n-6h)b$, we have the following:
\[
\frac{b}{a}=\frac{2h-3n+6-k}{6h-9n-k}.
\]

\begin{claim}\label{Fn_coeff_claim}
\begin{enumerate}
\renewcommand{\theenumi}{\arabic{enumi}}
\renewcommand{\labelenumi}{(\theenumi)}
\item\label{Fn_coeff_claim1}
For any point $P\in\Delta$, we have $\coeff_{l_P}E\geq 2b-a$, where 
$l_P$ is the fiber passing through $P$. 
$($In particular, if $b/a>1/2$, then $\coeff_{l_P}E>0$ holds.$)$
\item\label{Fn_coeff_claim2}
For any fiber $l\leq E$, we have $\coeff_lE\geq a/3$.
\item\label{Fn_coeff_claim3}
If $b/a>1/2$, then $b/a\leq(3n+5)/(3h)$ holds.
\end{enumerate}
\end{claim}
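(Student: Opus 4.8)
I would establish the three parts in turn, each resting on the relations already available for the elimination $\phi$, and I expect \eqref{Fn_coeff_claim2} to be where the real work lies.

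For \eqref{Fn_coeff_claim1} the plan is to squeeze $\mult_PE$ between a lower and an upper bound. By Lemma \ref{triplet_lem} \eqref{triplet_lem2} every $P\in\Delta$ satisfies $\mult_PE\geq a-b$. Since $E\sim e_0\sigma+el$ with $e_0=2a-3b$ (recall $h_0=3$ in this case), Proposition \ref{toric_prop} \eqref{toric_prop2} gives $\mult_PE\leq e_0+\coeff_{l_P}E$. Subtracting yields $\coeff_{l_P}E\geq(a-b)-e_0=2b-a$, which is positive as soon as $b/a>1/2$.

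For \eqref{Fn_coeff_claim2} I would fix a fiber $l_0\leq E$ and first note, from Lemma \ref{triplet_lem} \eqref{triplet_lem4} and $(L\cdot l_0)=h_0=3$, that $\deg(\Delta\cap l_0)=3$; in particular $\Delta\cap l_0\neq\emptyset$. The two facts to feed in are: (i) since distinct fibers are disjoint, the only fiber of $E$ through a point $P\in\Delta\cap l_0$ is $l_0$ itself, so the other components of $E$ through $P$ are sections, and the total coefficient of all section components of $E$ equals the $\sigma$-coefficient $e_0=2a-3b$ of its class; (ii) the local computation of $E_M^{\Delta,a-b}$ as in Examples \ref{Sit1} and \ref{Sit2} (the top exceptional curve over $P$ being a $(-1)$-curve absent from $E_M$) gives at each $P$ the relation
\[
(\coeff_{l_0}E)\cdot\mult_P(\Delta\cap l_0)+\sum_{D}(\coeff_DE)\cdot\mult_P(\Delta\cap D)=(a-b)\mult_P\Delta,
\]
the sum over sections $D\leq E$ through $P$, while the bound $\coeff E_M\subset\{1,\dots,a-1\}$ forces $(a-b)\bigl(\mult_P\Delta-\mult_P(\Delta\cap l_0)\bigr)<a$, hence (as $a-b>a/3$) $\mult_P\Delta\leq\mult_P(\Delta\cap l_0)+2$. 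The plan is then to run over the partitions of $\deg(\Delta\cap l_0)=3$ among the points of $\Delta\cap l_0$ and over the sections meeting $l_0$ there, using $\mult_P\Delta\geq\mult_P(\Delta\cap l_0)$ together with the cap $2a-3b$ on the total section coefficient, to force $\coeff_{l_0}E\geq a/3$ in every case. The extremal configuration is $|\Delta|\cap l_0=\{P\}$ with $P=\sigma\cap l_0$ and $\mult_P(\Delta\cap l_0)=3$: there the relation reads $\coeff_\sigma E+3\coeff_{l_0}E=(a-b)\mult_P\Delta$ with $\mult_P\Delta\geq 3$ and $\coeff_\sigma E\leq e_0$, giving $\coeff_{l_0}E\geq\bigl(3(a-b)-(2a-3b)\bigr)/3=a/3$, with equality exactly when $\coeff_\sigma E=2a-3b$ and $\mult_P\Delta=3$. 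The hard part will be this bookkeeping: the delicate case is a point of $\Delta\cap l_0$ carrying little of $\Delta\cap l_0$ but a heavy section, and I would dispose of it by passing to another point of $\Delta\cap l_0$ through which $l_0$ is the sole component of $E$ (where $\coeff_{l_0}E=\mult_PE\geq a-b>a/3$ at once), the point being that distinct sections meet $l_0$ at distinct points, so their coefficients still sum to at most $e_0$.

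For \eqref{Fn_coeff_claim3} I would assume $b/a>1/2$ and combine the previous parts with an intersection count. By \eqref{Fn_coeff_claim1} the fiber through every $P\in\Delta$ is a component of $E$, so $\Delta$ is supported on the fiber components $l_1,\dots,l_\nu$ of $E$; as these are disjoint and each has $\deg(\Delta\cap l_j)=3$, we get $k=\deg\Delta=3\nu$. Choosing a section at infinity $\sigma_\infty\not\leq E$ (so $\sigma_\infty\sim\sigma+nl$, $(\sigma\cdot\sigma_\infty)=0$, and every remaining component meets $\sigma_\infty$ non-negatively) and invoking \eqref{Fn_coeff_claim2},
\[
\nu\cdot\frac{a}{3}\leq\sum_{j}\coeff_{l_j}E\leq(E\cdot\sigma_\infty)=e=(n+2)a-hb.
\]
Thus $k=3\nu\leq 9e/a$, i.e.\ $ka\leq 9\bigl((n+2)a-hb\bigr)$, which rearranges to $9hb\leq a(9n+18-k)$. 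Finally $k\geq 3$ (since $\deg(\Delta\cap l_j)=3$), so $9hb\leq a(9n+15)$, that is $b/a\leq(3n+5)/(3h)$, as required.
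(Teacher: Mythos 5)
Your parts (1) and (3) are sound and are essentially the paper's own arguments: (1) squeezes $\mult_PE$ between the lower bound $a-b$ of Lemma \ref{triplet_lem} \eqref{triplet_lem2} and the upper bound of Proposition \ref{toric_prop} \eqref{toric_prop2} (and your arithmetic, giving $2b-a$, is the correct one), while (3) combines (1), (2) and the bound $\coeff_lE\leq e=(n+2)a-hb$ exactly as the paper does; your extra bookkeeping with $k=3\nu$ is correct but unnecessary, since a single fiber component of $E$ already suffices. The problem is part (2), which you rightly identify as the heart of the claim but do not actually prove.

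Your plan for (2) has two genuine gaps. First, your input (i) --- that the only components of $E$ through a point of $\Delta\cap l_0$ other than $l_0$ are sections --- is unjustified at this stage: Claim \ref{Fn_coeff_claim} is proved \emph{before} Claim \ref{big_curve_claim}, so a component of $E$ through such a point could a priori be a multisection (a curve in $|m\sigma+ul|$ with $m\geq 2$), possibly singular there, and then the local relations you import from Examples \ref{Sit1} and \ref{Sit2} (which concern nonsingular components crossing transversally) do not apply as stated. Second, the decisive case analysis is left as a plan: the ``delicate case'' is disposed of by ``passing to another point of $\Delta\cap l_0$ through which $l_0$ is the sole component of $E$'', but no such point need exist (for instance $|\Delta|\cap l_0$ may consist of two points each carrying another component of $E$; note also that two distinct sections of $\F_n$ are in general not disjoint, so they can meet $l_0$ at the same point). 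Whether your bookkeeping closes in all such configurations, for all $1/2\leq b/a<2/3$, is exactly what remains unproven. The paper avoids all of this with a short global computation on $M$: since $\deg(\Delta\cap l)=(L\cdot l)=3$, the strict transform $l_M$ satisfies $(l_M^2)=-3$ and $(L_M\cdot l_M)=0$, hence
\[
-a=(-aK_M\cdot l_M)=(-aK_M-bL_M\cdot l_M)=(E_M\cdot l_M)\geq(\coeff_lE)(l_M^2)=-3\coeff_lE,
\]
because every other component of the effective divisor $E_M$ meets $l_M$ non-negatively. This yields $\coeff_lE\geq a/3$ with no hypotheses whatsoever on the other components of $E$; that global intersection step is the idea missing from your proposal, and since your part (3) relies on part (2), the gap propagates there as well.
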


\begin{proof}
\eqref{Fn_coeff_claim1}
By Proposition \ref{toric_prop}, 
$\coeff_{l_P}E\geq(2a-3b)-(a-b)=a-2b$. 

\eqref{Fn_coeff_claim2}
Let $l_M$ be the strict transform of $l$ on $M$. 
Since $\deg(\Delta\cap l)=(L\cdot l)=3$, we have $(l_M^2)=-3$. 
Thus 
\begin{eqnarray*}
-a & = & (-aK_M\cdot l_M)=(-aK_M-bL_M\cdot l_M)\\
 & = & (E_M\cdot l_M)\geq(-3)\coeff_lE.
\end{eqnarray*}

\eqref{Fn_coeff_claim3}
Follows from 
\eqref{Fn_coeff_claim1}, \eqref{Fn_coeff_claim2} and the fact $\coeff_lE\leq e
=(n+2)a-hb$ for any fiber $l\leq E$.
\end{proof}

\begin{claim}\label{big_possible_claim}
\begin{enumerate}
\renewcommand{\theenumi}{\arabic{enumi}}
\renewcommand{\labelenumi}{(\theenumi)}
\item\label{big_possible_claim1}
If $b/a>1/2$, then the tetrad $(n, h, b/a, k)$ is one of the following: 
\[
(2,6,3/5,3), (2,6,4/7,4), (2,6,7/13,5), (2,7,11/21,3).
\]
\item\label{big_possible_claim2}
If $b/a=1/2$, then the triplet $(n, h, k)$ is one of the following: 
\begin{eqnarray*}
& & (0,3,6), (0,4,4), (1,5,5), (1,6,3), (2,6,6),\\
&  & (2,7,4), (2,8,2), (3,9,3), (3,10,1).
\end{eqnarray*}
\end{enumerate}
\end{claim}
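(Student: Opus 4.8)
The plan is to regard the whole statement as a finiteness-and-enumeration problem. Since $h_0=3$ is already forced, the tetrad is pinned down by the integers $(n,h,k)$ through the displayed formula $b/a=(2h-3n+6-k)/(6h-9n-k)$, so I would bound these three integers and then read off $b/a$ in each surviving case.

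First I would collect the elementary constraints. Put $N=2h-3n+6-k$ and $D=6h-9n-k$. Then $D-N=2(2h-3n-3)$, and since nefness of $K_X+L$ gives $h\geq 2n+2$ we get $D>N$; as $1/2\leq b/a<1$ this forces $0<N<D$, so the denominator is positive. Substituting $1/2\leq b/a<2/3$ into $b/a=N/D$ yields $19+9n-6h\leq k\leq 3n+12-2h$, with equality on the right exactly when $b/a=1/2$. Together with $k\geq 1$, the nef bounds $h\geq\max\{3n,2n+2\}$, bigness $(K_X+L)^2=2h-3n-4>0$ (which forces $h\geq 3$ when $n=0$), and effectivity $e\geq 0$, i.e. $h\leq(n+2)a/b$, these confine $(n,h)$ to a finite set. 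For $n=1$ I would add Definition \ref{fund_dfn}\eqref{fund_dfn4} for the unique $(-1)$-curve $\sigma\subset\F_1$: $(E\cdot\sigma)=e-e_0\leq 0$ gives $h\geq 3+a/b$, which already excludes $h=4$.

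For $b/a=1/2$ this suffices: setting $k=3n+12-2h$ and running $n=0,1,2,3$ through $\max\{3n,2n+2\}\leq h\leq 2n+4$, the extra constraints $h\geq 3$ (for $n=0$) and $h\geq 5$ (for $n=1$), and $k\geq 1$, leaves exactly the nine triples in (2), while $n\geq 4$ gives an empty range. For $b/a>1/2$ the decisive extra input is Claim \ref{Fn_coeff_claim}\eqref{Fn_coeff_claim3}, $b/a\leq(3n+5)/(3h)$, which with $b/a>1/2$ gives $h\leq 2n+3$; combined with $h\geq 3n$ this forces $n\leq 3$. Then $n=3$ is emptied by the two $k$-bounds and $n=2$ produces precisely the four tetrads of (1).

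The genuinely delicate point is that for $b/a>1/2$ these numerical bounds do not exclude $n=0$ and $n=1$: they leave the spurious candidates $(0,3,7/13,5)$ and $(1,5,9/17,4)$, and the hard part will be to eliminate these by a local analysis on $\Delta$. Here $b/a>1/2$ makes $2b-a>0$, so by Claim \ref{Fn_coeff_claim}\eqref{Fn_coeff_claim1} every point of $\Delta$ lies on a fibre component of $E$; since by Claim \ref{Fn_coeff_claim}\eqref{Fn_coeff_claim2} each such fibre has coefficient $\geq a/3$ while $\sum_{l\leq E}\coeff_lE\leq e<2a/3$, all of $\Delta$ is carried by a single fibre $l\leq E$ (two perpendicular fibres when $n=0$). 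As $\sigma\leq E$ and $\deg(\Delta\cap\sigma)=(L\cdot\sigma)=h-3n\geq 2$ for these $n$, the subscheme $\Delta$ must concentrate at $\sigma\cap l$; but then Lemma \ref{triplet_lem}\eqref{triplet_lem4} forces both transversal branches $\sigma$ and $l$ to meet $\Delta$ with multiplicity $\geq 2$ at that point, contradicting the $(\nu1)$-condition of Definition \ref{nu1_dfn}, which permits at most one of two transversal branches to meet $\Delta$ in multiplicity exceeding $1$. The multiplicity compatibility identities of Examples \ref{Sit1} and \ref{Sit2} (the coefficients $\coeff_{E_i}E\cdot\mult_P(\Delta\cap E_i)$ summing to $(a-b)\mult_P\Delta$) are what rule out any leftover point of $\Delta$ on $l\setminus\sigma$. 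This local step is where the real work lies; everything else is bookkeeping with the linear inequalities above.
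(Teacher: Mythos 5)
Your numerical skeleton is correct and is essentially the paper's own: the bounds $\max\{3n,2n+2\}\leq h\leq (n+2)a/b$, the $(-1)$-curve condition $(E\cdot\sigma)\leq 0$ on $\F_1$, and Claim \ref{Fn_coeff_claim} \eqref{Fn_coeff_claim3} produce exactly the nine triples of part (2), the four tetrads of part (1), and the same two spurious candidates $(0,3,7/13,5)$ and $(1,5,9/17,4)$ that the paper must kill geometrically. One slip in the bookkeeping: for $b/a>1/2$, $n=3$, $h=9$, your two stated $k$-bounds alone leave $k\in\{1,2\}$ (they give $-8\leq k<3$); emptiness needs Claim \ref{Fn_coeff_claim} \eqref{Fn_coeff_claim3} once more, which forces $k\geq 3$. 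Your elimination of $(0,3,7/13,5)$ is correct and genuinely different from the paper's: you combine Claim \ref{Fn_coeff_claim} \eqref{Fn_coeff_claim2} with the class budget $e_0=e=5t<2\cdot 13t/3$ to get a unique fiber of $E$ in each ruling, hence $|\Delta|$ is the single point where they cross, where both rulings meet $\Delta$ in degree $3$ by Lemma \ref{triplet_lem} \eqref{triplet_lem4}, contradicting the $(\nu1)$-condition; the paper instead produces two distinct fibers of the second ruling each meeting $\Delta$ in degree $3$, so $k\geq 6>5$.

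The gap is in $(1,5,9/17,4)$. The assertion that ``$\Delta$ must concentrate at $\sigma\cap l$'' does not follow from what precedes it: at that stage you only know $|\Delta|\subset l$ and $\mult_P(\Delta\cap\sigma)=2$ at $P=\sigma\cap l$, and concentration is precisely what needs proof. (Tellingly, in the paper's own run of this case points of $\Delta$ on $l\setminus\{P\}$ are never excluded --- the paper pins down $\mult_P\Delta=2$ while $\deg(\Delta\cap l)=3$ and $\mult_P(\Delta\cap l)=1$, so degree $2$ of $\Delta\cap l$ sits away from $P$ --- and its contradiction comes from elsewhere: using $(E\cdot\sigma)=-t$ it gets $\coeff_\sigma E\geq 20t/3$, $\coeff_l E\geq 17t/3$, and then by Example \ref{Sit2} the $(-1)$-curve over $P$ acquires coefficient $\geq 3t>0$ in $E_M$, violating Lemma \ref{mds_lem} \eqref{mds_lem1}.) Your proposed repair is on the right track: at a leftover point $Q\in\Delta\cap(l\setminus\sigma)$, Example \ref{Sit1} gives $\coeff_lE\cdot\mult_Q(\Delta\cap l)=(a-b)\mult_Q\Delta$, which is indeed impossible since $\coeff_lE\leq 6t<8t=a-b$ while $\mult_Q(\Delta\cap l)\leq\mult_Q\Delta$. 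But that identity applies only if $l$ is the \emph{only} component of $E$ through $Q$, and excluding other components is not in your proposal; nor can it be quoted from Claim \ref{big_curve_claim}, which is proved \emph{after} the present claim and uses it, so citing it here would be circular. You must do it by hand, e.g.: any component $C\leq E$ with $C\sim m\sigma+ul$, $u\geq m\geq 1$, has $(L\cdot C)=2m+3u\geq 5m$, while $(L_M\cdot C_{M})=0$ and Proposition \ref{sing_prop} force $(L\cdot C)=\sum_i m_i\leq km=4m$, a contradiction. (Similarly, $\sigma\leq E$, which you assert, needs the one-line check $(E\cdot\sigma)=-t<0$.)

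So: part (2) and all the numerics are fine; one of the two hard cases is settled by a correct alternative argument; the other is left with a real, though fillable, hole at exactly the step you yourself flag as ``where the real work lies.''
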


\begin{proof}
We can show that $n\leq 3$. Indeed, if $n\geq 4$, then $n=4$, $h=12$ and $b/a=1/2$ 
since $3n\leq h\leq(n+2)a/b\leq 2(n+2)$. Thus $k=0$ holds since $b/a=(18-k)/(36-k)$. 
This leads to a contradiction. 

\textbf{The case $n=0$:}
Since $K_X+L$ is big, we have $h\geq 3$. Moreover, $h\leq 4$ since $h\leq 2a/b\leq 4$. 
If $h=4$, then $b/a=(14-k)/(24-k)=1/2$. Thus $(n, h, b/a, k)=(0, 4, 1/2, 4)$. 
If $h=3$, then $b/a=(12-k)/(18-k)\leq 5/9$ by Claim \ref{Fn_coeff_claim} 
\eqref{Fn_coeff_claim3}. 
Thus $(n, h, b/a, k)=(0, 3, 7/13, 5)$ or $(0, 3, 1/2, 6)$.
Assume that $(n, h, b/a, k)=(0, 3, 7/13, 5)$. 
Then for any point $P\in\Delta$, $\coeff_{\sigma_P}E, \coeff_{l_P}E>0$
by Claim \ref{Fn_coeff_claim} \eqref{Fn_coeff_claim1}, where $\sigma_P$ (resp.\ $l_P$) 
is the minimal section (resp.\ fiber) passing through $P$. 
We can assume that $\mult_P(\Delta\cap l_P)=1$. Since $\deg(\Delta\cap l_P)=3$, 
there exists a point $Q\in\Delta\cap l_P\setminus\{P\}$. Then
$\coeff_{\sigma_Q}E>0$. 
However, $5=k\geq\deg(\Delta\cap\sigma_P)+\deg(\Delta\cap\sigma_Q)=6$, 
which leads to a contradiction. Thus $(n, h, b/a, k)\neq(0, 3, 7/13, 5)$. 

\textbf{The case $n=1$:}
Since $(E\cdot\sigma)\leq 0$, we have $a\leq(h-3)b$. Hence 
$5\leq h\leq 3a/b\leq 6$. 
If $h=6$, then $b/a=(15-k)/(27-k)=1/2$. Thus $(n, h, b/a, k)=(1, 6, 1/2, 3)$. 
If $h=5$, then $b/a=(13-k)/(21-k)\leq 8/15$ by Claim \ref{Fn_coeff_claim} 
\eqref{Fn_coeff_claim3}. 
Thus $(n, h, b/a, k)=(1, 5, 9/17, 4)$ or $(1, 5, 1/2, 5)$. 
Assume that $(n, h, b/a, k)=(1, 5, 9/17, 4)$. 
Then there exists a positive integer $t$ such that $a=17t$, $b=9t$, $L\sim 3\sigma+5l$ 
and $E\sim t(7\sigma+6l)$. 
By Claim \ref{Fn_coeff_claim} \eqref{Fn_coeff_claim2}, there exists a fiber $l$ such that 
$\coeff_lE\geq 17t/3$. Thus $20t/3\leq\coeff_\sigma E(\leq 7t)$. 
Let $P:=\sigma\cap l$. 
Assume that there exists  a point $P'\in\Delta\setminus\{P\}$. Then 
$l_{P'}\leq E$ by Claim \ref{Fn_coeff_claim} \eqref{Fn_coeff_claim1}, where $l_{P'}$ 
is the fiber passing through $P'$. However, 
$4=k\geq\deg(\Delta\cap l)+\deg(\Delta\cap l_{P'})=6$, a contradiction. 
Hence $2=\deg(\Delta\cap\sigma)=\mult_P(\Delta\cap\sigma)$ and 
$\mult_P(\Delta\cap l)=1$. 
On the other hand, 
\begin{eqnarray*}
2 & = & \deg(\Delta\cap l)-\mult_P(\Delta\cap l)=
\sum_{Q\in\Delta\setminus\{P\}}\mult_Q(\Delta\cap l)\\
 & \leq & \sum_{Q\in\Delta\setminus\{P\}}\mult_Q\Delta
=4-\mult_P\Delta\leq 4-\mult_P(\Delta\cap\sigma)=2.
\end{eqnarray*}
Thus $\mult_P\Delta=2$. Let $V\to X$ be the blowing up along $\Delta$ and 
$\Gamma\subset M$ be the strict transform of the 
exceptional curve of $V\to X$ over $P$. 
Then $\coeff_\Gamma E_M\geq\coeff_\Gamma {E'}_M^{\Delta, a-b}=3t$, where 
$E':=(20t/3)\sigma+(17t/3)l$. 
However, $\Gamma$ is a $(-1)$-curve, contrary to 
Lemma \ref{mds_lem} \eqref{mds_lem1}. Thus $(n, h, b/a, k)\neq(1, 5, 9/17, 4)$. 

\textbf{The case $n=2$:}
We have $6\leq h\leq 4a/b\leq 8$. 
If $h=8$, then $b/a=(16-k)/(30-k)=1/2$. Thus $(n, h, b/a, k)=(2, 8, 1/2, 2)$. 
If $h=7$, then $b/a=(14-k)/(24-k)\leq 11/21$ by Claim \ref{Fn_coeff_claim} 
\eqref{Fn_coeff_claim3}. Thus $(n, h, b/a, k)=(2, 7, 11/21, 3)$ or $(2, 7, 1/2, 4)$. 
If $h=6$, then $b/a=(12-k)/(18-k)\leq 11/18$ by Claim \ref{Fn_coeff_claim} 
\eqref{Fn_coeff_claim3}. Thus $(n, h, b/a, k)=(2, 6, 3/5, 3)$, $(2, 6, 4/7, 4)$, 
$(2, 6, 7/13, 5)$ or $(2, 6, 1/2, 6)$. 

\textbf{The case $n=3$:}
We have $9\leq h\leq 5a/b\leq 10$. 
If $h=10$, then $b/a=(17-k)/(33-k)=1/2$. Thus $(n, h, b/a, k)=(3, 10, 1/2, 1)$. 
If $h=9$, then $b/a=(15-k)/(27-k)\leq 14/27$ by Claim \ref{Fn_coeff_claim} 
\eqref{Fn_coeff_claim3}. Thus $(n, h, b/a, k)=(3, 9, 1/2, 3)$. 
\end{proof}

\begin{claim}\label{big_curve_claim}
Assume that there exists an irreducible curve $C$ on $X$ apart from $\sigma$, $l$ 
such that $(L_M\cdot\phi^{-1}_*C)=0$. 
Then $b/a=1/2$ and the triplet $(n, h, k)$ is one of $(0, 3, 6)$, $(1, 5, 5)$ or $(2, 6, 6)$. 
Moreover, $\Delta\subset C$ holds. 
If $(n, h, k)=(0, 3, 6)$, then $C\sim \sigma+l$. If $(n, h, k)=(1, 5, 5)$ or $(2, 6, 6)$, then 
$C=\sigma_\infty$. 
\end{claim}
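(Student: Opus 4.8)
The plan is to translate the hypothesis $(L_M\cdot\phi^{-1}_*C)=0$ into a numerical constraint on the class of $C$ and then run through the finitely many possibilities supplied by Claim \ref{big_possible_claim}. Write $C\sim p\sigma+ql$ on $X=\F_n$. Since $C$ is irreducible and is not a fibre we have $p\geq 1$, and since $C\neq\sigma$ the inequality $(C\cdot\sigma)=q-np\geq 0$ gives the crucial bound $q\geq np$ (for $n=0$ one moreover has $q\geq 1$, as $C$ is a fibre of neither ruling). Recall from the present subsection that $h_0=3$, so $L\sim 3\sigma+hl$. Let $(m_1,\dots,m_k)$ be the multiplicity sequence of $C$ with respect to $\phi$ and set $C_M:=\phi^{-1}_*C$. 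Because $L_M=\phi^*L-K_{M/X}$, Proposition \ref{sing_prop} \eqref{sing_prop12} turns the hypothesis $(L_M\cdot C_M)=0$ into $(L\cdot C)=\sum_{i=1}^k m_i$, while $p_a(C_M)\geq 0$ together with Proposition \ref{sing_prop} \eqref{sing_prop11} gives $\sum_{i=1}^k m_i(m_i-1)\leq 2p_a(C)$.

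Adding these and applying the Cauchy--Schwarz inequality $\bigl(\sum m_i\bigr)^2\leq k\sum m_i^2$ yields the key estimate
\[
(L\cdot C)^2\leq k\bigl(2p_a(C)+(L\cdot C)\bigr).
\]
A direct computation on $\F_n$ using $L\sim 3\sigma+hl$ and $K_X\sim-2\sigma-(n+2)l$ gives the two closed formulas
\[
(L\cdot C)=(h-3n)p+3q,\qquad 2p_a(C)=(p-1)(2q-np-2),
\]
so the estimate becomes a single quadratic inequality in $p,q$ whose coefficients depend only on $(n,h,k)$.

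Next I would substitute, case by case, the finitely many values of $(n,h,b/a,k)$ permitted by Claim \ref{big_possible_claim}. For every tetrad with $b/a>1/2$, and for the $b/a=1/2$ triples other than $(0,3,6)$, $(1,5,5)$, $(2,6,6)$, the quadratic inequality has no solution subject to $p\geq 1$ and $q\geq np$; the bound $q\geq np$ is precisely what pushes the right-hand side below the left-hand side. For the three surviving triples the inequality is satisfiable only at the boundary, namely $(p,q)=(1,1)$ when $(n,h,k)=(0,3,6)$ and $(1,5,5)$, and $(p,q)=(1,2)$ when $(n,h,k)=(2,6,6)$; that is, $C\sim\sigma+l$ (a $(1,1)$-curve) in the first case and $C$ a section at infinity $\sigma_\infty$ in the other two, which also forces $b/a=1/2$. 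In each surviving case one checks $(L\cdot C)=k$ directly, and since $p=1$ makes $C$ a nonsingular rational curve, Proposition \ref{sing_prop} \eqref{sing_prop12} gives $\deg(\Delta\cap C)=\sum m_i=(L\cdot C)=k=\deg\Delta$, whence $\Delta\subset C$.

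The only delicate point is the bookkeeping in the elimination of cases: the quadratic inequality is useless unless one simultaneously exploits $q\geq np$ (irreducibility), $p\geq 1$, and the exact value of $k$. I expect the main obstacle to be organizing these quadratic inequalities uniformly enough that each exclusion is transparent rather than ad hoc. A convenient device is to eliminate $p$ by using $p\leq q/n$ (for $n\geq 1$) to bound the product $(p-1)(2q-np-2)$ from above, reducing each case to a one-variable inequality in $q$ that is manifestly violated for $q\geq np\geq n$.
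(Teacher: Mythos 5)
Your proposal is correct, and I checked your claimed outcomes: the inequality $(L\cdot C)^2\leq k\bigl(2p_a(C)+(L\cdot C)\bigr)$, combined with $p\geq 1$ and $q\geq np$ (resp.\ $q\geq 1$ when $n=0$), does eliminate all four tetrads with $b/a>1/2$ and the six triples $(0,4,4)$, $(1,6,3)$, $(2,7,4)$, $(2,8,2)$, $(3,9,3)$, $(3,10,1)$, and admits only $(p,q)=(1,1)$ for $(0,3,6)$ and $(1,5,5)$ and $(p,q)=(1,2)$ for $(2,6,6)$. However, your discriminating mechanism differs from the paper's. The paper also writes $C\sim m\sigma+(nm+u)l$ and feeds Proposition \ref{sing_prop} into Claim \ref{big_possible_claim}, but its key constraint is the termwise bound $0\leq m_i\leq m$, which together with $\sum_i m_i=hm+3u$ gives $k\geq h$ in one line; this instantly removes every entry of Claim \ref{big_possible_claim} except $(0,3,6)$, $(0,4,4)$, $(1,5,5)$, $(2,6,6)$, after which $(0,4,4)$ is excluded because it forces $u=0$, $m=1$, i.e.\ $C=\sigma$, and the class of $C$ in the survivors is pinned down using the genus formula as an \emph{equality} $\sum_i m_i(m_i-1)=2p_a(C)$, justified by the fact that $C_M$ is $\alpha$-exceptional and hence a smooth rational curve. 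You replace both ingredients: Cauchy--Schwarz stands in for $m_i\leq m$, and the one-sided bound $p_a(C_M)\geq 0$ stands in for smooth rationality of $C_M$. What your route buys is economy of inputs (no appeal to Proposition \ref{dP-basic_prop} and \cite[\S 4]{KoMo} for rationality of $C$, only nonnegativity of arithmetic genus) and a single uniform inequality; what it costs is a thirteen-case verification where the paper needs essentially one line plus two short case discussions. It is worth noting that your technique is exactly the one the paper itself deploys in Claim \ref{mult_claim} and in Claim \ref{small_possible_claim} \eqref{small_possible_claim2}, so the two arguments draw on the same toolbox; the paper simply chose the slicker $m_i\leq m$ shortcut for this particular claim.
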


\begin{proof}
Take $m\geq 1$ and $u\geq 0$ such that $C\in|m\sigma+(nm+u)l|$.
We note that $C$ is a rational curve by Proposition \ref{dP-basic_prop} 
and \cite[\S 4]{KoMo}. 
Let $(m_1,\dots,m_k)$ be the multiplicity sequence of $C$ with respects to $\phi$. 
By Proposition \ref{sing_prop}, we have:

\begin{enumerate}
\renewcommand{\theenumi}{\roman{enumi}}
\renewcommand{\labelenumi}{(\theenumi)}
\item\label{bmult_claim1}
$0\leq m_i\leq m$ for any $i$.
\item\label{bmult_claim2}
$hm+3u=\sum_{i=1}^km_i$.
\item\label{bmult_claim3}
$(m-1)(nm+2u-2)=\sum_{i=1}^km_i(m_i-1)$.
\end{enumerate}

By \eqref{bmult_claim1} and \eqref{bmult_claim2}, $k\geq h$ holds. 
Thus $b/a=1/2$ and the triplet $(n, h, k)$ is one of $(0, 3, 6)$, 
$(1, 5, 5)$ or $(2, 6, 6)$. Indeed, if $(n, h, k)=(0, 4, 4)$, then 
$u=0$ and $m=1$. This implies that $C=\sigma$, a contradiction. 

If $k=h$, then $u=0$ and $m_i=m$ for any $1\leq i\leq k$. Hence 
$(m-1)(nm-2)=km(m-1)$ by \eqref{bmult_claim3}. Thus $m=1$. This implies that 
$C=\sigma_\infty$. Since $\deg(\Delta\cap C)=k$, $\Delta\subset C$ 
holds. 

If $k\neq h$, then $(n, h, k)=(0, 3, 6)$. We can assume that $u\geq m\geq 1$. 
Assume that $m\geq 2$. Then $6m\leq 3(m+u)=\sum_{i=1}^6m_i\leq 6m$. 
Thus $m=u$ and $2u-2=6m$. This leads to a contradiction. 
Hence $m=1$. Then $u=1$ since $3+3u=\sum_{i=1}^6m_i\leq 6$. This implies that 
$C\in|\sigma+l|$. Since $\deg(\Delta\cap C)=6=k$, $\Delta\subset C$ holds. 
\end{proof}

\textbf{The case $(n, h, b/a, k)=(2,6,3/5,3)$:}
In this case, there exists a positive integer $t$ such that $a=5t$, $b=3t$, 
$L\sim 3\sigma+6l$ and $E\sim t(\sigma+2l)$. 
By Claim \ref{big_curve_claim}, there exists a fiber $l\leq E$. 
Moreover, such fiber is unique since $k=\deg(\Delta\cap l)$. Then $E=t(2\sigma+l)$. 
In particular, $\Delta\cap\sigma=\emptyset$ and $\Delta\subset l$ hold. 
If $t=1$, then this triplet is nothing but the type \textbf{$[$(5,3),2;1,2$]_1$}.

\textbf{The case $(n, h, b/a, k)=(2,6,4/7,4)$:}
In this case, there exists a positive integer $t$ such that $a=7t$, $b=4t$, 
$L\sim 3\sigma+6l$ and $E\sim 2t(\sigma+2l)$. 
By Claim \ref{big_curve_claim}, there exists a fiber $l\leq E$. 
Moreover, such fiber is unique since $4=k<2\deg(\Delta\cap l)=6$. 
Hence $E=2t(\sigma+2l)$. 
Moreover, $\deg(\Delta\cap\sigma)=(L\cdot\sigma)=0$. Thus 
$\Delta\cap\sigma=\emptyset$. 
Pick an arbitrary point $P\in\Delta$. Then $4t\mult_P(\Delta\cap l)=3t\mult_P\Delta$ 
by Example \ref{Sit1}. Thus $(\mult_P\Delta, \mult_P(\Delta\cap l))=(4, 3)$. 
If $t=1$, then this triplet is nothing but the type 
\textbf{$[$(7,4),2;2,4$]_1$}.

\textbf{The case $(n, h, b/a, k)=(2,6,7/13,5)$:}
In this case, there exists a positive integer $t$ such that $a=13t$, $b=7t$, 
$L\sim 3\sigma+6l$ and $E\sim 5t(\sigma+2l)$. 
By Claim \ref{big_curve_claim}, there exists a fiber $l\leq E$. 
Moreover, such fiber is unique since $4=k<2\deg(\Delta\cap l)=6$. 
Hence $E=5t(\sigma+2l)$. 
Moreover, $\deg(\Delta\cap\sigma)=(L\cdot\sigma)=0$. Thus 
$\Delta\cap\sigma=\emptyset$. 
Pick arbitrary point $P\in\Delta$. Then $10t\mult_P(\Delta\cap l)=6t\mult_P\Delta$ 
by Example \ref{Sit1}. Thus $(\mult_P\Delta, \mult_P(\Delta\cap l))=(5, 3)$. 
If $t=1$, then this triplet is nothing but the type 
\textbf{$[$(13,7),2;5,10$]_1$}.

\textbf{The case $(n, h, b/a, k)=(2,7,11/21,3)$:}
In this case, there exists a positive integer $t$ such that $a=21t$, $b=11t$, 
$L\sim 3\sigma+7l$ and $E\sim t(9\sigma+7l)$. 
By Claim \ref{Fn_coeff_claim} \eqref{Fn_coeff_claim2}, there exists a fiber $l\leq E$ 
and $\coeff_lE\geq 7t$. Thus $E=t(9\sigma+7l)$. 
Let $P:=\sigma\cap l$. Then $\mult_P(\Delta\cap\sigma)=\deg(\Delta\cap\sigma)=1$. 
Moreover, $10t\mult_P\Delta=9t+7t\mult_P(\Delta\cap l)$ by Example \ref{Sit2}. 
Thus $\mult_P\Delta=\mult_P(\Delta\cap l)=3$. 
If $t=1$, then this triplet is nothing but the type 
\textbf{$[$(21,11),2;9,7$]_1$}.

\textbf{The case $(n, h, b/a, k)=(0,3,1/2,6)$:}
In this case, there exists a positive integer $t$ such that $a=2t$, $b=t$, 
$L\sim 3\sigma+3l$ and $E\sim t(\sigma+l)$. 
Assume that there exists a component $C\leq E$ apart from $\sigma$, $l$. 
Then $C\in|\sigma+l|$ and $\Delta\subset C$ by Claim \ref{big_curve_claim}. 
Conversely, for any nonsingular $C\in|\sigma+l|$ and any subscheme $\Delta\subset C$ 
such that $\deg\Delta=6$, the triplet $(X, tC, \Delta)$ is a 
$(2t, t)$-fundamental triplet. Thus $E$ must be equal to $tC$ by 
Lemma \ref{triplet_lem} \eqref{triplet_lem6}. This contradicts to the assumption 
$\coeff(E/b)\not\subset\Z$. 
Therefore any component of $E$ is either a minimal section or a fiber. 
Assume that there exist distinct fibers $l_1$, $l_2\leq E$. 
Since $\deg(\Delta\cap l_i)=3$ for $i=1$, $2$, we have $\Delta\subset l_1\cup l_2$. 
Let $\sigma\leq E$ be a minimal section. 
Let $P_i:=\sigma\cap l_i$ $(i=1$, $2)$. 
Since $|\Delta|\cap\sigma\subset\{P_1, P_2\}$ and 
$\deg(\Delta\cap\sigma)=3$, we may assume that 
$\mult_{P_1}(\Delta\cap\sigma)\geq 2$. 
Then $\mult_{P_1}(\Delta\cap l_1)=1$, contradicts to the fact 
$\Delta\subset l_1\cup l_2$. 
Therefore $E=t\sigma+tl$ for some minimal section $\sigma$ and a fiber $l$. 
This contradicts to the assumption $\coeff(E/b)\not\subset\Z$. 
Thus there is no such fundamental triplet for the case  $(n, h, b/a, k)=(0,3,1/2,6)$.

\textbf{The case $(n, h, b/a, k)=(0,4,1/2,4)$:}
In this case, there exists a positive integer $t$ such that $a=2t$, $b=t$ and 
$E=t\sigma$. However, this contradicts to the assumption $\coeff(E/b)\not\subset\Z$. 
Thus there is no such fundamental triplet for the case  $(n, h, b/a, k)=(0,4,1/2,4)$.

\textbf{The case $(n, h, b/a, k)=(1,5,1/2,5)$:}
In this case, there exists a positive integer $t$ such that $a=2t$, $b=t$, 
$L\sim 3\sigma+5l$ and $E\sim t(\sigma+l)$. 
Assume that there exists a component $C\leq E$ apart from $\sigma$, $l$. 
Then $C=\sigma_\infty$ and $\Delta\subset C$ by Claim 
\ref{big_curve_claim}. 
Conversely, for any section of infinity $\sigma_\infty$ and 
any subscheme $\Delta\subset\sigma_\infty$ such that $\deg\Delta=5$, the 
triplet $(X, t\sigma_\infty, \Delta)$ is a $(2t, t)$-fundamental triplet. 
Thus $E$ must be equal to $t\sigma_\infty$ by 
Lemma \ref{triplet_lem} \eqref{triplet_lem6}. This contradicts to the assumption 
$\coeff(E/b)\not\subset\Z$. 
Therefore any component of $E$ is either $\sigma$ or a fiber. 
By Claim \ref{Fn_coeff_claim} \eqref{Fn_coeff_claim2}, $\coeff_lE\geq 2t/3$ for 
any fiber $l\leq E$. 
Thus $E=t(\sigma+l)$.
This contradicts to the assumption $\coeff(E/b)\not\subset\Z$. 
Thus there is no such fundamental triplet for the case  $(n, h, b/a, k)=(1,5,1/2,5)$.

\textbf{The case $(n, h, b/a, k)=(1,6,1/2,3)$:}
In this case, there exists a positive integer $t$ such that $a=2t$, $b=t$ and 
$E=t\sigma$. However, this contradicts to the assumption $\coeff(E/b)\not\subset\Z$. 
Thus there is no such fundamental triplet for the case  $(n, h, b/a, k)=(1,6,1/2,3)$.

\textbf{The case $(n, h, b/a, k)=(2,6,1/2,6)$:}
In this case, there exists a positive integer $t$ such that $a=2t$, $b=t$, 
$L\sim 3\sigma+6l$ and $E\sim t(\sigma+2l)$. 
Assume that there exists a component $C\leq E$ apart from $\sigma$, $l$. 
Then $C=\sigma_\infty$ and $\Delta\subset C$ by Claim 
\ref{big_curve_claim}. 
Conversely, for any section of infinity $\sigma_\infty$ and 
any subscheme $\Delta\subset\sigma_\infty$ such that $\deg\Delta=6$, the 
triplet $(X, t\sigma_\infty, \Delta)$ is a $(2t, t)$-fundamental triplet. 
Thus $E$ must be equal to $t\sigma_\infty$ by 
Lemma \ref{triplet_lem} \eqref{triplet_lem6}. This contradicts to the assumption 
$\coeff(E/b)\not\subset\Z$. 
Therefore we can write $E=t\sigma+\sum_{i=1}^jc_il_i$, where $c_i$ be positive integers 
with $\sum_{i=1}^j=2t$ and $l_i$ be distinct fibers.
Since $c_i<2t$, we have $j\geq 2$. 
We note that $c_i\geq t$ for any $i$ by Example \ref{Sit1} and the fact 
$\Delta\cap\sigma=\emptyset$.
Thus $E=t(\sigma+l_1+l_2)$.
This contradicts to the assumption $\coeff(E/b)\not\subset\Z$. 
Thus there is no such fundamental triplet for the case  $(n, h, b/a, k)=(2,6,1/2,6)$.

\textbf{The case $(n, h, b/a, k)=(2,7,1/2,4)$:}
In this case, there exists a positive integer $t$ such that $a=2t$, $b=t$, 
$L\sim 3\sigma+7l$ and $E\sim t(\sigma+l)$. 
Any component of $E$ is either $\sigma$ or a fiber by Claim \ref{big_curve_claim}. 
By Claim \ref{Fn_coeff_claim} \eqref{Fn_coeff_claim2}, $\coeff_lE\geq 2t/3$ for 
any fiber $l\leq E$. 
Thus $E=t(\sigma+l)$.
This contradicts to the assumption $\coeff(E/b)\not\subset\Z$. 
Thus there is no such fundamental triplet for the case  $(n, h, b/a, k)=(2,7,1/2,4)$.

\textbf{The case $(n, h, b/a, k)=(2,8,1/2,2)$:}
In this case, there exists a positive integer $t$ such that $a=2t$, $b=t$ and 
$E=t\sigma$. However, this contradicts to the assumption $\coeff(E/b)\not\subset\Z$. 
Thus there is no such fundamental triplet for the case  $(n, h, b/a, k)=(2,8,1/2,2)$.

\textbf{The case $(n, h, b/a, k)=(3,9,1/2,3)$:}
In this case, there exists a positive integer $t$ such that $a=2t$, $b=t$, 
$L\sim 3\sigma+9l$ and $E\sim t(\sigma+l)$. 
Any component of $E$ is either $\sigma$ or a fiber by Claim \ref{big_curve_claim}. 
By Claim \ref{Fn_coeff_claim} \eqref{Fn_coeff_claim2}, $\coeff_lE\geq 2t/3$ for 
any fiber $l\leq E$. 
Thus $E=t(\sigma+l)$.
This contradicts to the assumption $\coeff(E/b)\not\subset\Z$. 
Thus there is no such fundamental triplet for the case  $(n, h, b/a, k)=(3,9,1/2,3)$.

\textbf{The case $(n, h, b/a, k)=(3,10,1/2,1)$:}
In this case, there exists a positive integer $t$ such that $a=2t$, $b=t$ and 
$E=t\sigma$. However, this contradicts to the assumption $\coeff(E/b)\not\subset\Z$. 
Thus there is no such fundamental triplet for the case  $(n, h, b/a, k)=(3,10,1/2,1)$.

\subsection{The case $X=\mathbb{F}_n$ with $K_X+L$ non-big}\label{small_section}

We consider the case $X=\F_n$ such that the divisor $K_X+L$ is not big. 

Since $K_X+L$ is nef, not linearly trivial and not big, 
$L\sim 2\sigma+hl$ for some $h\geq n+3$. 
Since $L$ is nef and big, we have $h\geq\max\{2n, n+3\}$. 
We note that $E\sim 2(a-b)\sigma+((n+2)a-hb)l$.

\begin{claim}\label{sigma_empty_claim}
We have $h=2n$, $n\geq 3$ and $b/a\leq(n+1)/(2n-1)$ unless 
$n=1$, $h=4$ and $b/a=1/2$.
\end{claim}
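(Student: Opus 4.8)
The plan is to organize the whole argument around a single dichotomy: whether the minimal section $\sigma$ is a component of $E$. First I would record, from the data just set up, the class $E\sim 2(a-b)\sigma+el$ with $e:=(n+2)a-hb$, together with $\Delta\cap\sigma=\emptyset$ and $n\geq 1$ from Definition \ref{fund_dfn} \eqref{fund_dfn71}, and the bound $h\geq\max\{2n,n+3\}$. The one computation that drives everything is the intersection number
\[
(E\cdot\sigma)=-2n(a-b)+e=(2-n)a+(2n-h)b,
\]
obtained from $(\sigma^2)=-n$, $(\sigma\cdot l)=1$, $(l^2)=0$.

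Suppose first that $\sigma\leq E$. Since $\sigma$ is a nonsingular component, Lemma \ref{triplet_lem} \eqref{triplet_lem4} gives $(L\cdot\sigma)=\deg(\Delta\cap\sigma)=0$; as $(L\cdot\sigma)=h-2n$ this forces $h=2n$, and then $h\geq n+3$ yields $n\geq 3$. For the remaining inequality I would first observe that, once $h=2n$, the claim $b/a\leq(n+1)/(2n-1)$ is equivalent to $e\geq a-b$ (indeed $e-(a-b)=(n+1)a-(2n-1)b$). This is where the real content sits, but it dissolves once one picks any $P\in\Delta$, nonempty by Definition \ref{fund_dfn} \eqref{fund_dfn2}: on one hand $\mult_PE\geq a-b$ by Lemma \ref{triplet_lem} \eqref{triplet_lem2}, and on the other hand $P\notin\sigma$ together with $n\geq 1$ gives $\mult_PE\leq e$ by Proposition \ref{toric_prop} \eqref{toric_prop1}. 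Chaining these two bounds yields $e\geq a-b$, hence the desired estimate.

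In the complementary case $\sigma\not\leq E$, effectivity of $E$ gives $(E\cdot\sigma)\geq 0$. Since $h\geq 2n$ makes $(2n-h)b\leq 0$, the displayed formula forces $(2-n)a\geq(h-2n)b\geq 0$, so $n\leq 2$; and $n=2$ is impossible, since it would give $(E\cdot\sigma)=(4-h)b\geq 0$, i.e.\ $h\leq 4$, against $h\geq n+3=5$. Thus $n=1$, so $\sigma$ is a $(-1)$-curve and Definition \ref{fund_dfn} \eqref{fund_dfn4} gives $(E\cdot\sigma)\leq 0$; combined with $(E\cdot\sigma)\geq 0$ this yields $(E\cdot\sigma)=a-(h-2)b=0$, whence $b/a=1/(h-2)\geq 1/2$ and $h\geq n+3=4$ force $h=4$ and $b/a=1/2$, exactly the stated exception. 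The only genuinely delicate step is the inequality in the first case; everything else is bookkeeping on $(E\cdot\sigma)$, and the anticipated obstacle — pinning down $b/a$ — is precisely what the toric multiplicity bound of Proposition \ref{toric_prop} \eqref{toric_prop1} takes care of.
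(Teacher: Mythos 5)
Your proof is correct and takes essentially the same route as the paper: both arguments hinge on the sign of $(E\cdot\sigma)$ to decide whether $\sigma\leq E$, use $(L\cdot\sigma)=\deg(\Delta\cap\sigma)=0$ together with $h\geq n+3$ to force $h=2n$ and $n\geq 3$, and obtain the final inequality from the sandwich $a-b\leq\mult_PE\leq(n+2)a-2nb$ (Lemma \ref{triplet_lem} \eqref{triplet_lem2} plus Proposition \ref{toric_prop} \eqref{toric_prop1}). The only cosmetic difference is in the exceptional case $n=1$, where you invoke Definition \ref{fund_dfn} \eqref{fund_dfn4} to force $(E\cdot\sigma)=0$, while the paper deduces $(E\cdot\sigma)\leq a-2b\leq 0$ by pure arithmetic; both are valid.
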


\begin{proof}
We have the following: 
\begin{eqnarray*}
& & n(2(a-b))-((n+2)a-hb)
\geq
\begin{cases}
-a+2b\geq 0 & \text{if }n=1,\\
b>0 & \text{if }n=2,\\
(n-2)a>0 & \text{if }n\geq 3.
\end{cases}
\end{eqnarray*}
Therefore $\sigma\leq E$ unless $n=1$, $h=4$ and $b/a=1/2$.
If $\sigma\leq E$, then $h-2n=(L\cdot E)=0$. Thus $h=2n$. 
Hence $n\geq 3$ since $h\geq n+3$. 
Moreover, we have 
$a-b\leq\mult_PE\leq(n+2)a-2nb$
for any point $P\in\Delta$ by Proposition \ref{toric_prop}. 
Therefore we have $b/a\leq(n+1)/(2n-1)$. 
\end{proof}

We first consider the case $(n, h, b/a)=(1, 4, 1/2)$.
In this case, there exists a positive integer $t$ such that 
$a=2t$, $b=t$, $L\sim 2\sigma+4l$ and $E\sim 2t(\sigma+l)$. 
Then we have $kt=(L\cdot E)=8t$. Thus $k=8$ holds. 
Since $\coeff_\sigma E<a=2t$, 
there exists a component $C\leq E$ apart from $\sigma$, $l$. 
Define $m\geq 1$ and $u\geq 0$ such that $C\sim m\sigma+(m+u)l$. 
Let $(m_1,\dots,m_8)$ be the multiplicity sequence of $C$ with respects to $\phi$. 
Then we have $0\leq m_i\leq m$, $\sum_{i=1}^8m_i=(L\cdot C)=4m+2u$ 
and $(m-1)(m+2u-2)=2p_a(C)=\sum_{i=1}^8m_i(m_i-1)$
by Proposition \ref{sing_prop}. Then we have:
\[
m^2+(2u+1)m+2=\sum_{i=1}^8m_i^2\geq 8((4m+2u)/8)^2=(4m^2+4um+u^2)/2.
\]
Thus $2m^2-2m+u^2-4\leq 0$ holds. Therefore $(m, u)=(1, 0), (1, 1), (1, 2)$ or $(2, 0)$. 
However, if $m=1$, then $C$ is a section. Hence 
$2+2u=(C^2)+1\geq\deg(\Delta\cap C)=4+2u$ holds. This leads to a contradiction. 
Therefore $(m, u)=(2, 0)$. In this case, $C$ is nonsingular. 
Moreover, we have $\sum_{i=1}^8m_i=\sum_{i=1}^8m_i^2=8$. Thus $m_i=1$ for any $i$. 
In particular, $\Delta\subset C$ holds. Conversely, for any nonsingular 
$C\in|2\sigma+2l|$ and any subscheme $\Delta\subset C$ such that $\deg\Delta=8$, 
the triplet $(X, tC, \Delta)$ is a $(2t, t)$-fundamental triplet. 
Thus $E$ must be equal to $t\sigma_\infty$ by 
Lemma \ref{triplet_lem} \eqref{triplet_lem6}. This contradicts to the assumption 
$\coeff(E/b)\not\subset\Z$. 
Thus there is no such fundamental triplet for the case  $(n, h, b/a)=(1,4,1/2)$.

From now on, we consider the case $h=2n$, $n\geq 3$ and $b/a\leq(n+1)/(2n-1)$. 
Since $k(a-b)=(L\cdot E)=2(n+2)a-4nb$, we have 
\[
\frac{b}{a}=\frac{2n+4-k}{4n-k}.
\]
There exists a positive rational number $t$ with 
$t\cdot\gcd(4n-k, 2n+4-k)\in\Z$ such that 
$a=(4n-k)t$, $b=(2n+4-k)t$ (thus $a-b=2(n-2)t$), $L\sim 2\sigma+2nl$ and 
$E\sim(n-2)t(4\sigma+kl)$. 
Since $1/2\leq b/a\leq(n+1)/(2n-1)$, we have $2\leq k\leq 8$.

\begin{claim}\label{small_possible_claim}
\begin{enumerate}
\renewcommand{\theenumi}{\arabic{enumi}}
\renewcommand{\labelenumi}{(\theenumi)}
\item\label{small_possible_claim1}
We have the following: 
\begin{enumerate}
\renewcommand{\theenumii}{\alph{enumii}}
\renewcommand{\labelenumii}{(\theenumii)}
\item\label{small_possible_claim11}
$\coeff_\sigma E\geq(n-2)t\cdot(4n-k)/n$. 
\item\label{small_possible_claim12}
$\coeff_{l_P}E\geq\coeff_\sigma E-2(n-2)t\geq (n-2)t\cdot(2n-k)/n$ 
for any $P\in\Delta$, where $l_P$ is the fiber passing through $P$. 
\end{enumerate}
\item\label{small_possible_claim2}
Assume that there exists an irreducible curve $C$ on $X$ apart from $\sigma$, $l$ 
such that $(L_M\cdot\phi^{-1}_*C)=0$. 
Then $C=\sigma_\infty$ and $(n, k)=(3, 6), (3, 7), (3, 8)$ or $(4, 8)$ unless 
$C\sim\sigma+4l$ and $(n, k)=(3, 8)$.
\end{enumerate}
\end{claim}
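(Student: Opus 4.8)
The plan is to handle the two assertions by different means: part (1) by elementary intersection theory on $\F_n$ combined with Proposition \ref{toric_prop}, and part (2) by the multiplicity-sequence bookkeeping already carried out in Claim \ref{big_curve_claim}. Throughout I use that $\sigma\leq E$ (Claim \ref{sigma_empty_claim}), that $\Delta\cap\sigma=\emptyset$ (Definition \ref{fund_dfn} \eqref{fund_dfn71}), and that $E\sim 4(n-2)t\,\sigma+(n-2)tk\,l$, so that $e_0:=4(n-2)t=2(a-b)$.

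For \eqref{small_possible_claim11}, set $c_\sigma:=\coeff_\sigma E$ and write $E=c_\sigma\sigma+E'$ with $E'$ effective and not containing $\sigma$. A direct computation gives $(E\cdot\sigma)=(n-2)t(k-4n)$, while $(E\cdot\sigma)=-nc_\sigma+(E'\cdot\sigma)\geq -nc_\sigma$ because $(E'\cdot\sigma)\geq 0$. Comparing the two yields $c_\sigma\geq(n-2)t(4n-k)/n$. For \eqref{small_possible_claim12}, fix $P\in\Delta$; since $P\notin\sigma$ we have $\mult_PE'=\mult_PE\geq a-b$ by Lemma \ref{triplet_lem} \eqref{triplet_lem2}. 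Applying Proposition \ref{toric_prop} \eqref{toric_prop2} to the effective divisor $E'\sim(e_0-c_\sigma)\sigma+(n-2)tk\,l$ gives $\mult_PE'\leq(e_0-c_\sigma)+\coeff_{l_P}E'$; since $\coeff_{l_P}E'=\coeff_{l_P}E$ this rearranges to $\coeff_{l_P}E\geq(a-b)-e_0+c_\sigma=c_\sigma-2(n-2)t$, and the last inequality of \eqref{small_possible_claim12} is then pure algebra from \eqref{small_possible_claim11}.

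For part \eqref{small_possible_claim2} I would imitate Claim \ref{big_curve_claim}. Write $C\in|m\sigma+(nm+u)l|$ with $m\geq 1$ and $u\geq 0$; the hypothesis $(L_M\cdot C_M)=0$ forces $C_M$ to be $\alpha$-exceptional, hence $C$ is rational with $p_a(C_M)=0$. With $(m_1,\dots,m_k)$ the multiplicity sequence of $C$ along $\phi$, Proposition \ref{sing_prop} should give the three relations $0\leq m_i\leq m$, $\sum_i m_i=(L\cdot C)=2nm+2u$, and $\sum_i m_i(m_i-1)=2p_a(C)=(m-1)(nm+2u-2)$. The first two force $2nm+2u\leq km$, hence $k\geq 2n$; since $n\geq 3$ and $k\leq 8$ only $(n,k)\in\{(3,6),(3,7),(3,8),(4,8)\}$ survive. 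In each surviving case I would substitute $\sum_i m_i^2=(m-1)(nm+2u-2)+(2nm+2u)$ into the power-mean inequality $\sum_i m_i^2\geq(\sum_i m_i)^2/k$, obtaining a quadratic constraint on $(m,u)$ that forces $m=1$ except when $(n,k)=(3,8)$, where $u=1$ is also admissible. For $m=1$ and $u=0$ the class is $\sigma+nl$ and $(C\cdot\sigma)=u=0$, so $C=\sigma_\infty$; the single extra solution $(n,k)=(3,8)$, $m=1$, $u=1$ gives $C\sim\sigma+4l$, which is the asserted exception.

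The computations in part (1) are very short, so the real work is the finite case analysis in \eqref{small_possible_claim2}. The one delicate point there is to keep the arithmetic-genus identity exact, so that the power-mean estimate isolates the genuine exceptional class $\sigma+4l$ on $\F_3$ rather than either discarding it or admitting spurious extra classes.
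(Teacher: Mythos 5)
Your proposal is correct and takes essentially the same route as the paper: part (1) is the same effectivity-plus-intersection computation (the paper intersects $E_M$ with $\sigma_M$ on $M$ via $-aK_M\sim bL_M+E_M$, you intersect $E$ with $\sigma$ directly on $X$; the two coincide because $\Delta\cap\sigma=\emptyset$), and part (2) uses the identical multiplicity-sequence relations, the bound $k\geq 2n$, and the power-mean inequality. The only difference is bookkeeping: the paper first bounds $u\leq 2$ by completing the square and then runs cases on $u$, whereas you run cases on the four surviving pairs $(n,k)$; both computations yield $m=1$ and $u=0$, with the single extra solution $u=1$ at $(n,k)=(3,8)$ giving $C\sim\sigma+4l$.
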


\begin{proof}
\eqref{small_possible_claim1}
We have already seen that $\sigma\leq E$. 
Let $\sigma_M$ be the strict transform of $\sigma$ on $M$. 
Since $\Delta\cap\sigma=\emptyset$, the curve $\sigma_M$ satisfies that 
$(\sigma_M^2)=-n$. Thus we have the following: 
\[
(4n-k)(n-2)t=a(-K_M\cdot\sigma_M)=(E_M\cdot\sigma_M)\geq(-n)\coeff_\sigma E.
\]
Hence \eqref{small_possible_claim11} holds. 
On the other hand, for any $P\in\Delta$, 
$\mult_PE=\mult_P(E-(\coeff_\sigma E)\sigma)$ since 
$\Delta\cap\sigma=\emptyset$. 
Thus $\coeff_{l_P}E\geq\coeff_\sigma E-2(n-2)t$ by Proposition \ref{toric_prop}. 
Hence \eqref{small_possible_claim12} holds. 

\eqref{small_possible_claim2}
We define $m\geq 1$ and $u\geq 0$ such that $C\sim m\sigma+(nm+u)l$. 
Let $(m_1,\dots,m_k)$ be the multiplicity sequence of $C$ with respects to $\phi$. 
We have $0\leq m_i\leq m$, $\sum_{i=1}^km_i=(L\cdot C)=2nm+2u$ and 
$(m-1)(nm+2u-2)=2p_a(C)=\sum_{i=1}^km_i(m_i-1)$ by Proposition \ref{sing_prop}. 
Thus $2nm+2u\leq km$. Hence $k\geq 2n$. Therefore 
$(n, k)=(3, 6), (3, 7), (3, 8)$ or $(4, 8)$. 

We consider the case $(n, k)=(4, 8)$. 
Then $8m+2u=\sum_{i=1}^8m_i\leq 8m$. Thus $u=0$ and $m_i=m$ for any $i$. 
Moreover, since $8m(m-1)=(m-1)(4m-2)$, we have $m=1$. 
Therefore $C\sim\sigma+4l$. 

We consider the case $n=3$ (note that $6\leq k\leq 8$). 
We have $3m^2+(2u+1)m+2=\sum_{i=1}^km_i^2\geq((6m+2u)/k)^2k$. 
Thus we have: 
\begin{eqnarray*}
0 & \geq & (36-3k)m^2+(24u-(2u+1)k)m+(4u^2-2k)\\
&= & (36-3k)\left(m-\frac{(2u+1)k-24u}{2(36-3k)}\right)^2\\
 & + & 4u^2-2k-\frac{((2u+1)k-24u)^2}{4(36-3k)}.
\end{eqnarray*}
Hence $((2u+1)k-24u)^2/(4(36-3k))\geq 4u^2-2k$ holds.
Thus we have
$
4(k-12)(u^2+u-6)+k\geq 0.
$
If $u\geq 3$, then $4(k-12)(u^2+u-6)+k\leq 24(k-12)+k=25k-288<0$, 
a contradiction. Thus $u=0$, $1$ or $2$. 

\textbf{The case $u=2$:}
In this case, $0\geq(36-3k)m^2+(48-5k)m+(16-2k)=(3m+2)((12-k)m+8-k)$ holds. 
This leads to a contradiction. 

\textbf{The case $u=1$:}
In this case, $0\geq(36-3k)m^2+(24-3k)m+(4-2k)$ holds. Hence we have
\begin{eqnarray*}
m\leq\frac{-24+3k+\sqrt{192k-15k^2}}{2(36-3k)}=
\begin{cases}
\frac{-6+\sqrt{612}}{36}<1 & (k=6),\\
\frac{-3+\sqrt{609}}{30}<1 & (k=7),\\
1 & (k=8).
\end{cases}
\end{eqnarray*}
Therefore we have $m=1$ and $k=8$. 

\textbf{The case $u=0$:}
In this case, $0\geq(36-3k)m^2-km-2k$ holds. Hence we have
\begin{eqnarray*}
m\leq\frac{k+\sqrt{288k-23k^2}}{2(36-3k)}=
\begin{cases}
1 & (k=6),\\
\frac{7+\sqrt{889}}{30}<2 & (k=7),\\
\frac{8+\sqrt{832}}{24}<2 & (k=8).
\end{cases}
\end{eqnarray*}
Therefore we have $m=1$ and $k=6$, $7$, $8$. 
\end{proof}

\textbf{The case $k=2$:}
In this case, there exists a positive rational number $t$ 
with $t\cdot\gcd(4n-2, 2n+2)\in\Z$ such that 
$a=(4n-2)t$, $b=(2n+2)t$, $L\sim 2\sigma+2nl$ and 
$E\sim(n-2)t(4\sigma+2l)$. We note that 
\[
\gcd(4n-2, 2n+2)=
\begin{cases}
2 & (3\nmid n-2),\\
6 & (3\mid n-2).
\end{cases}
\]
By Claim \ref{small_possible_claim}, 
we have $E=(n-2)t(4\sigma+2l)$ for some fiber $l$. 
Since $\deg(\Delta\cap l)=2$, we have $\Delta\subset l$. 
If $3\nmid n-2$ and $t=1/2$, then this triplet is nothing but the type 
\textbf{$[$(2{\bi n}-1,{\bi n}+1),{\bi n};2({\bi n}-2),{\bi n}-2$]_1$}. 
If $3\mid n-2$ (set $3m:=n-2$) and $t=1/6$, then this triplet is nothing but the type 
\textbf{$[$(2{\bi m}+1,{\bi m}+1),3{\bi m}+2;2{\bi m},{\bi m}$]_1$}.

\textbf{The case $k=3$:}
In this case, there exists a positive rational number $t$ 
with $t\cdot\gcd(4n-3, 2n+1)\in\Z$ such that 
$a=(4n-3)t$, $b=(2n+1)t$, $L\sim 2\sigma+2nl$ and 
$E\sim(n-2)t(4\sigma+3l)$. We note that 
\[
\gcd(4n-3, 2n+1)=
\begin{cases}
1 & (5\nmid n-2),\\
5 & (5\mid n-2).
\end{cases}
\]
By Claim \ref{small_possible_claim}, 
we have $E=(n-2)t(4\sigma+3l)$ for some fiber $l$. 
For any point $P\in\Delta$, we have 
$3(n-2)t\mult_P(\Delta\cap l)=2(n-2)t\mult_P\Delta$ by Example \ref{Sit1}. 
Thus $\mult_P\Delta=3$ and $\mult_P(\Delta\cap l)=2$. 
If $5\nmid n-2$ and $t=1$, then this triplet is nothing but the type 
\textbf{$[$(4{\bi n}-3,2{\bi n}+1),{\bi n};4({\bi n}-2),3({\bi n}-2)$]_1$}. 
If $5\mid n-2$ (set $5m:=n-2$) and $t=1/5$, then this triplet is nothing but the type 
\textbf{$[$(4{\bi m}+1,2{\bi m}+1),5{\bi m}+2;4{\bi m},3{\bi m}$]_1$}.

\textbf{The case $k=4$:}
In this case, there exists a positive rational number $t$ 
with $t\cdot\gcd(4n-4, 2n)\in\Z$ such that 
$a=(4n-4)t$, $b=2nt$, $L\sim 2\sigma+2nl$ and 
$E\sim(n-2)t(4\sigma+4l)$. We note that 
\[
\gcd(4n-4, 2n)=
\begin{cases}
2 & (2\nmid n),\\
4 & (2\mid n).
\end{cases}
\]
By Claim \ref{small_possible_claim}, 
we have $E=4(n-2)t\sigma+\sum_{i=1}^jc_il_i$ such that $c_i\geq 2(n-2)t$, 
$\sum_{i=1}^jc_i=4(n-2)t$ and $l_i$ are distinct fibers. 
In particular, $j\leq 2$ holds. 

We consider the case $j=2$. Then $E=2(n-2)t(2\sigma+l_1+l_2)$. 
Since $\sum_{i=1}^2\deg(\Delta\cap l_i)=2+2=4=k$, we have 
$\Delta\subset l_1\cup l_2$. 
If $2\nmid n$ and $t=1/2$, then this triplet is nothing but the type 
\textbf{$[$(2{\bi n}-2,{\bi n}),{\bi n};2({\bi n}-2),2({\bi n}-2)$]_{11}$}. 
If $2\mid n$ (set $2m:=n-2$) and $t=1/4$, then this triplet is nothing but the type 
\textbf{$[$(2{\bi m}+1,{\bi m}+1),2{\bi m}+2;2{\bi m},2{\bi m}$]_{11}$}. 

We consider the case $j=1$. Then $E=4(n-2)t(\sigma+l)$. 
For any point $P\in\Delta$, we have 
$4(n-2)t\mult_P(\Delta\cap l)=2(n-2)t\mult_P\Delta$ by Example \ref{Sit1}. 
Thus $\mult_P\Delta=2\mult_P(\Delta\cap l)$. 
Assume that $|\Delta|=\{P\}$. Then $(\mult_P\Delta, \mult_P(\Delta\cap l))=(4, 2)$. 
If $2\nmid n$ and $t=1/2$, then this triplet is nothing but the type 
\textbf{$[$(2{\bi n}-2,{\bi n}),{\bi n};2({\bi n}-2),2({\bi n}-2)$]_1$(1)}. 
If $2\mid n$ (set $2m:=n-2$) and $t=1/4$, then this triplet is nothing but the type 
\textbf{$[$(2{\bi m}+1,{\bi m}+1),2{\bi m}+2;2{\bi m},2{\bi m}$]_1$(1)}. 
Assume that $|\Delta|=\{P_1, P_2\}$. 
Then $(\mult_{P_i}\Delta, \mult_{P_i}(\Delta\cap l))=(2, 1)$ for $i=1$, $2$. 
If $2\nmid n$ and $t=1/2$, then this triplet is nothing but the type 
\textbf{$[$(2{\bi n}-2,{\bi n}),{\bi n};2({\bi n}-2),2({\bi n}-2)$]_1$(2)}. 
If $2\mid n$ (set $2m:=n-2$) and $t=1/4$, then this triplet is nothing but the type 
\textbf{$[$(2{\bi m}+1,{\bi m}+1),2{\bi m}+2;2{\bi m},2{\bi m}$]_1$(2)}.

\textbf{The case $k=5$:}
In this case, there exists a positive rational number $t$ 
with $t\cdot\gcd(4n-5, 2n-1)\in\Z$ such that 
$a=(4n-5)t$, $b=(2n-1)t$, $L\sim 2\sigma+2nl$ and 
$E\sim(n-2)t(4\sigma+5l)$. We note that 
\[
\gcd(4n-5, 2n-1)=
\begin{cases}
1 & (3\nmid n-2),\\
3 & (3\mid n-2).
\end{cases}
\]
By Claim \ref{small_possible_claim}, 
we have $E=4(n-2)t\sigma+\sum_{i=1}^jc_il_i$ such that $c_i\geq 2(n-2)t$, 
$\sum_{i=1}^jc_i=5(n-2)t$ and $l_i$ are distinct fibers. 
In particular, $j\leq 2$ holds. 

We consider the case $j=2$. We can assume that $c_1\geq c_2$. 
For any point $P_i\in\Delta\cap l_i$, we have 
$c_i\mult_{P_i}(\Delta\cap l_i)=2(n-2)t\mult_{P_i}\Delta$. 
Since $5(n-2)t/2\leq c_1\leq 3(n-2)t$ and $\mult_{P_1}(\Delta\cap l_1)\leq 2$, 
we have $(c_1, c_2)=(3(n-2)t, 2(n-2)t)$, 
$(\mult_{P_1}\Delta, \mult_{P_1}(\Delta\cap l_1))=(3, 2)$, and 
$(\mult_{P_2}\Delta, \mult_{P_2}(\Delta\cap l_2))=(1, 1)$ or $(2, 2)$. 
If $3\nmid n-2$ and $t=1$, then this triplet is nothing but the type 
\textbf{$[$(4{\bi n}-5,2{\bi n}-1),{\bi n};4({\bi n}-2),5({\bi n}-2)$]_{32}$}. 
If $3\mid n-2$ (set $3m:=n-2$) and $t=1/3$, then this triplet is nothing but the type 
\textbf{$[$(4{\bi m}+1,2{\bi m}+1),3{\bi m}+2;4{\bi m},5{\bi m}$]_{32}$}. 

We consider the case $j=1$. Then $E=(n-2)t(4\sigma+5l)$. 
For any $P\in\Delta$, we have $5(n-2)t\mult_P(\Delta\cap l)=2(n-2)t\mult_P\Delta$ 
by Example \ref{Sit1}. Thus $|\Delta|=\{P\}$, $\mult_P\Delta=5$ and 
$\mult_P(\Delta\cap l)=2$. In this case, the maximum of the coefficients of $E_M$ 
is equal to $6(n-2)t$. This value must be smaller than $a=(4n-5)t$. Thus $n=3$. 
If $t=1$, then this triplet is nothing but the 
type \textbf{$[$(7,5),3;4,5$]_1$}.

\textbf{The case $k=6$:}
In this case, there exists a positive rational number $t$ 
with $2t\in\Z$ such that 
$a=(4n-6)t$, $b=(2n-2)t$, $L\sim 2\sigma+2nl$ and 
$E\sim(n-2)t(4\sigma+6l)$. 
Assume that there exists a component of $E$ apart from $\sigma$, $l$. 
Then such component must be $\sigma_\infty$ and $n=3$ by 
Claim \ref{small_possible_claim}. 
Since $\deg(\Delta\cap\sigma_\infty)=(L\cdot\sigma_\infty)=6=k$, 
we have $\Delta\subset\sigma_\infty$. Conversely, for $n=3$ and for any 
section at infinity $\sigma_\infty$, any subscheme $\Delta\subset\sigma_\infty$ 
with $\deg\Delta=6$, the triplet $(X, 2t(\sigma+\sigma_\infty), \Delta)$ is a 
$(6t, 4t)$-fundamental triplet. 
Thus $E$ must be equal to $2t(\sigma+\sigma_\infty)$ by 
Lemma \ref{triplet_lem} \eqref{triplet_lem6}. 
If $t=1/2$, then this triplet is nothing but the 
type \textbf{$[$(3,2),3;2,3$]_{1\infty}$}. 
We assume that $E=4(n-2)t\sigma+\sum_{i=1}^jc_il_i$ such that $c_i\geq 2(n-2)t$, 
$\sum_{i=1}^jc_i=6(n-2)t$ and $l_i$ are distinct fibers. 
Thus $2\leq j\leq 3$ since $6(n-2)t\geq a=(4n-6)t$. 

We consider the case $j=3$. 
Then $E=2(n-2)t(2\sigma+l_1+l_2+l_3)$. 
Since $\sum_{i=1}^3\deg(\Delta\cap l_i)=6=k$, we have 
$\Delta\subset l_1\cup l_2\cup l_3$. 
If $t=1/2$, then this triplet is nothing but the type 
\textbf{$[$(2{\bi n}-3,{\bi n}-1),{\bi n};2({\bi n}-2),3({\bi n}-2)$]_{111}$}. 

We consider the case $j=2$. 
We can assume that $4(n-2)t\geq c_1\geq 3(n-2)t\geq c_2\geq 2(n-2)t$. 
Note that $\deg(\Delta\cap l_i)=2$ for $i=1$, $2$. 
For any $P_i\in\Delta\cap l_i$, we have $c_i\mult_{P_i}(\Delta\cap l_i)
=2(n-2)t\mult_{P_i}\Delta$ by Example \ref{Sit1}. 
Thus $(c_1, c_2)=(4(n-2)t, 2(n-2)t)$ or $(3(n-2)t, 3(n-2)t)$. 
Assume that $(c_1, c_2)=(4(n-2)t, 2(n-2)t)$ and $|\Delta|\cap l_1=\{P_1\}$. 
Then $(\mult_{P_1}\Delta, \mult_{P_1}(\Delta\cap l_1))=(4, 2)$. 
If $t=1/2$, then this triplet is nothing but the type 
\textbf{$[$(2{\bi n}-3,{\bi n}-1),{\bi n};2({\bi n}-2),3({\bi n}-2)$]_{21}$(1)}. 
Assume that $(c_1, c_2)=(4(n-2)t, 2(n-2)t)$ and $|\Delta|\cap l_1=\{P_{11}, P_{12}\}$. 
Then $(\mult_{P_{1q}}\Delta, \mult_{P_{1q}}(\Delta\cap l_1))=(2, 1)$ for $q=1$, $2$. 
If $t=1/2$, then this triplet is nothing but the type 
\textbf{$[$(2{\bi n}-3,{\bi n}-1),{\bi n};2({\bi n}-2),3({\bi n}-2)$]_{21}$(2)}. 
Assume that $(c_1, c_2)=(3(n-2)t, 3(n-2)t)$. 
Then $|\Delta|\cap l_i=\{P_i\}$, $\mult_{P_i}\Delta=3$ and 
$\mult_{P_i}(\Delta\cap l_i)=2$ for $i=1$, $2$. 
If $2\nmid n$ and $t=1$, then this triplet is nothing but the type 
\textbf{$[$(4{\bi n}-6,2{\bi n}-2),{\bi n};4({\bi n}-2),6({\bi n}-2)$]_{11}$}. 
If $2\mid n$ (set $2m:=n-2$) and $t=1/2$, then this triplet is nothing but the type 
\textbf{$[$(4{\bi m}+1,2{\bi m}+1),2{\bi m}+2;4{\bi m},6{\bi m}$]_{11}$}.

\textbf{The case $k=7$:}
In this case, there exists a positive integer $t$ such that 
$a=(4n-7)t$, $b=(2n-3)t$, $L\sim 2\sigma+2nl$ and 
$E\sim(n-2)t(4\sigma+7l)$. 

Assume that there exists a component of $E$ apart from $\sigma$, $l$. 
Then such component must be equal to $\sigma_\infty$ and $n=3$ by 
Claim \ref{small_possible_claim}. We note that $\deg(\Delta\cap\sigma_\infty)=6$. 
The curve $\sigma_\infty$ is the only component 
of $E$ apart from $\sigma$, $l$. Indeed, if there exists another component 
$\sigma'_\infty\leq E$, then $(\sigma_\infty\cdot\sigma'_\infty)\geq 6+6-7=5$ 
by Corollary \ref{twocurve_cor}. This leads to a contradiction. 
Thus $E=(4t-d)\sigma+d\sigma_\infty+\sum_{i=1}^jc_il_i$, where 
$0<d\leq 2t$, $j\geq 0$, $l_i$ are distinct fibers and $3d+\sum_{i=1}^jc_i=7t$ 
by Definition \ref{fund_dfn} \eqref{fund_dfn72}.
Let $\sigma_{\infty,M}$ be the strict transform of $\sigma_\infty$ on $M$. Then 
$(\sigma_{\infty,M}^2)=-3$. Thus 
$-5t=a(-K_M\cdot\sigma_{\infty,M})=(E_M\cdot\sigma_{\infty,M})\geq
(-3)\coeff_{\sigma_\infty}E=-3d$. Hence $d\geq 5t/3$. 
Since $\deg(\Delta\cap l_i)=2$, $k\geq\deg(\Delta\cap\sigma_\infty)+j=6+j$. 
Thus $j=1$ (if $j=0$, then $d=7t/3$, a contradiction). 
Set $P:=\sigma_\infty\cap l_1$. We consider the case such that there exists a point 
$Q\in\Delta\cap(l_1\setminus\sigma_\infty)$. Then $\mult_Q\Delta=1$ since 
$k\geq\deg(\Delta\cap\sigma_\infty)+\mult_Q\Delta=6+\mult_Q\Delta$. 
Thus $c_1=2t$ and $d=5t/3$ by Example \ref{Sit1}. Moreover, 
$2t\mult_P\Delta=(5t/3)\mult_P(\Delta\cap\sigma_\infty)+2t$ by Example \ref{Sit2}. 
Thus $\mult_P\Delta=\mult_P(\Delta\cap\sigma_\infty)=6$. 
If $t=3$, then this triplet is nothing but the type 
\textbf{$[$(15,9),3;12,21$]_{5\infty 1}$}. 
Now we consider the case $|\Delta|\cap l_1=\{P\}$. Then $\mult_P(\Delta\cap l_1)=2$. 
Thus $2t\mult_P\Delta=d+2(7t-3d)=14t-5d$. 
Since $d\geq 5t/3$, we have $\mult_P\Delta=2$ and $d=2t$. 
Then for any $Q\in \Delta\cap(\sigma_\infty\setminus\{P\})$, 
$\mult_Q\Delta=\mult_Q(\Delta\cap\sigma_\infty)$. 
If $t=1$, then this triplet is nothing but the type 
\textbf{$[$(5,3),3;4,7$]_{2\infty 1}$}. 

Assume that any component of $E$ is either $\sigma$ or a fiber. 
Then $E=4(n-2)t\sigma+\sum_{i=1}^jc_il_i$ such that $\sum_{i=1}^jc_i=7(n-2)t$ and 
$l_i$ are distinct fibers. 
Since $2(n-2)t\leq c_i<(4n-7)t$, we have $j=2$ or $3$. 
We have $\deg(\Delta\cap l_i)=2$ for $i=1$, $2$. 
Moreover, for any $P_i\in\Delta\cap l_i$, 
$c_i\mult_{P_i}(\Delta\cap l_i)=2(n-2)t\mult_{P_i}\Delta$. 
Since $\mult_{P_i}(\Delta\cap l_i)\leq 2$ and $5(n-2)t\geq(4n-7)t$, 
we have $c_i=2(n-2)t$, $3(n-2)t$ or $4(n-2)t$. 

We consider the case $j=3$. 
Then we can assume that $(c_1, c_2, c_3)=(3(n-2)t, 2(n-2)t, 2(n-2)t)$. 
Moreover, $|\Delta|\cap l_1=\{P_1\}$, $\mult_{P_1}\Delta=3$ and 
$\mult_{P_q}\Delta=\mult_{P_q}(\Delta\cap l_q)$ for any $q=2$, $3$. 
If $t=1$, then this triplet is nothing but 
the type \textbf{$[$(4{\bi n}-7,2{\bi n}-3),{\bi n};4({\bi n}-2),7({\bi n}-2)$]_{322}$}. 

We consider the case $j=2$. Then we can assume that $(c_1, c_2)=(4(n-2)t, 3(n-2)t)$. 
Moreover, $|\Delta|\cap l_2=\{P_2\}$ and $\mult_{P_2}\Delta=3$. 
We consider the case $|\Delta|\cap l_1=\{P_1\}$. Then $\mult_{P_1}\Delta=4$. 
If $t=1$, then this triplet is nothing but 
the type \textbf{$[$(4{\bi n}-7,2{\bi n}-3),{\bi n};4({\bi n}-2),7({\bi n}-2)$]_{43}$(1)}. 
We consider the case $|\Delta|\cap l_1=\{P_{11}, P_{12}\}$. Then 
$\mult_{P_{1q}}\Delta=2$ for $q=1$, $2$. 
If $t=1$, then this triplet is nothing but 
the type \textbf{$[$(4{\bi n}-7,2{\bi n}-3),{\bi n};4({\bi n}-2),7({\bi n}-2)$]_{43}$(2)}.

\textbf{The case $k=8$:}
In this case, there exists a positive integer $t$ such that 
$a=2t$, $b=t$, $L\sim 2\sigma+2nl$ and 
$E\sim(n-2)t(2\sigma+4l)$. Since $\coeff_\sigma E<a=2t$, there exists a component 
$C\leq E$ apart from $\sigma$, $l$. By Claim \ref{small_possible_claim}, 
one of the following holds: $n=4$ and $C=\sigma_\infty$, $n=3$ and $C\sim\sigma+4l$, 
or $n=3$ and $C=\sigma_\infty$. 
Assume that $n=4$ and $C=\sigma_\infty$. Then $\Delta\subset\sigma_\infty$ 
since $\deg(\Delta\cap\sigma_\infty)=8$. 
Conversely, for $n=4$, for any section at infinity $\sigma_\infty$ and any subscheme 
$\Delta\subset\sigma_\infty$ such that $\deg\Delta=8$, the triplet 
$(X, t\sigma+t\sigma_\infty, \Delta)$ is a $(2t, t)$-fundamental triplet. 
Thus $E=t\sigma+t\sigma_\infty$ by Lemma \ref{triplet_lem} \eqref{triplet_lem6}. 
This contradicts to the assumption $\coeff(E/b)\not\subset\Z$. 
Assume that $n=3$ and $C\sim\sigma+4l$. 
Then $\Delta\subset C$ 
since $\deg(\Delta\cap C)=8$. Conversely, for $n=3$, 
for an irreducible $C\sim\sigma+4l$ and any subscheme 
$\Delta\subset C$ such that $\deg\Delta=8$ and $\Delta\cap\sigma=\emptyset$, 
the triplet 
$(X, t\sigma+tC, \Delta)$ is a $(2t, t)$-fundamental triplet (since $C$ and $\sigma$ 
intersect transversally). 
Thus $E=t\sigma+tC$ by Lemma \ref{triplet_lem} \eqref{triplet_lem6}. 
This contradicts to the assumption $\coeff(E/b)\not\subset\Z$. 
Thus $n=3$ and $C=\sigma_\infty$. We note that $\deg(\Delta\cap\sigma_\infty)=6$. 
The curve $\sigma_\infty$ is the only component of $E$ apart from $\sigma$, $l$. 
Indeed, if there exists another component 
$\sigma'_\infty\leq E$, then $(\sigma_\infty\cdot\sigma'_\infty)\geq 6+6-8=4$ 
by Corollary \ref{twocurve_cor}. This leads to a contradiction. 
Thus $E=(2t-d)\sigma+d\sigma_\infty+\sum_{i=1}^jc_il_i$, where 
$0<d\leq t$, $j\geq 0$, $l_i$ are distinct fibers and $3d+\sum_{i=1}^jc_i=4t$ 
by Definition \ref{fund_dfn} \eqref{fund_dfn72}.
Let $\sigma_{\infty,M}$ be the strict transform of $\sigma_\infty$ on $M$. Then 
$(\sigma_{\infty,M}^2)=-3$. Thus 
$-2t=a(-K_M\cdot\sigma_{\infty,M})=(E_M\cdot\sigma_{\infty,M})\geq
(-3)\coeff_{\sigma_\infty}E=-3d$. Hence $d\geq 2t/3$. 
Since $\deg(\Delta\cap l_i)=2$, $k\geq\deg(\Delta\cap\sigma_\infty)+j=6+j$. 
Thus $j=1$ or $2$ (if $j=0$, then $d=4t/3$, a contradiction). 

We consider the case $j=1$. Then $c_1=4t-3d$. If $d<t$, then 
$|\Delta|\cap\sigma_\infty=\{P\}$, where $P:=\sigma_\infty\cap l_1$. 
Thus $\mult_P(\Delta\cap\sigma_\infty)=6$. Thus $\mult_P(\Delta\cap l_1)=1$. 
Hence there exists a point $Q\in\Delta\cap(l_1\setminus\{P\})$. 
Since $\mult_Q(\Delta\cap l_2)=1$ and 
$\mult_Q\Delta\leq\deg\Delta-\deg(\Delta\cap\sigma_\infty)=2$, $4t-3d=t$ or $2t$ 
holds. By the assumption $d<t$, $d=2t/3$. However, in this case, $c_1=2t$ holds, 
a contradiction. Thus $d=t$, that is,  
$E=t(\sigma+\sigma_\infty+l_1)$. 
This contradicts to the assumption $\coeff(E/b)\not\subset\Z$. 

We consider the case $j=2$. Let $P_i:=\sigma_\infty\cap l_i$ for $i=1$, $2$. 
Assume that $d=t$. Then $c_1+c_2=t$ (thus $c_i<t$), $|\Delta|\cap l_i=\{P_i\}$ and 
$\mult_{P_i}(\Delta\cap l_i)=2$ holds. 
Hence $t\mult_{P_i}\Delta=2c_i+t$. 
Taking the sum, we have 
$4t\leq t(\mult_{P_1}\Delta+\mult_{P_2}\Delta)=4t$. Thus $\mult_{P_i}\Delta=2$ 
and $c_i=1/2$ for $i=1$, $2$. 
If $t=2$, then this triplet is nothing but 
the type \textbf{$[$(4,2),3;4,8$]_{2\infty 11}$}. 
Now assume that $d<t$. Then $|\Delta|\cap\sigma_\infty\subset\{P_1, P_2\}$. 
We can assume that $\mult_{P_1}(\Delta\cap\sigma_\infty)\geq 3$. 
Then $t\mult_{P_1}\Delta=c_1+d\mult_{P_1}(\Delta\cap\sigma_\infty)$. 
Since there exists a point $Q_1\in\Delta\cap l_1\setminus\{P_1\}$ such that 
$\mult_{Q_1}(\Delta\cap l_1)=1$, $c_1=t\mult_{Q_1}\Delta$. 
Since $c_1<2t$, we have $c_1=t$, $\mult_{Q_1}\Delta=1$ and 
$t(\mult_{P_1}\Delta-1)=d\mult_{P_1}(\Delta\cap\sigma_\infty)$. 
Thus 
\[
t>d=t\frac{\mult_{P_1}\Delta-1}{\mult_{P_1}(\Delta\cap\sigma_\infty)}.
\]
Since $\mult_{P_1}\Delta\geq\mult_{P_1}(\Delta\cap\sigma_\infty)$, we have 
$\mult_{P_1}\Delta=\mult_{P_1}(\Delta\cap\sigma_\infty)$ and 
$(d, \mult_{P_1}\Delta)=(2t/3, 3), (3t/4, 4), (4t/5, 5)$ or $(5t/6, 6)$. 
If $(d, \mult_{P_1}\Delta)=(2t/3, 3)$, then $\mult_{P_2}(\Delta\cap\sigma_\infty)=3$. 
By using the same argument, we have $c_2=t$ and 
$\mult_{P_2}\Delta=\mult_{P_2}(\Delta\cap\sigma_\infty)=3$. 
If $t=3$, then this triplet is nothing but 
the type \textbf{$[$(6,3),3;6,12$]_{2\infty 11}$}. 
If $(d, \mult_{P_1}\Delta)=(3t/4, 4)$, then $\mult_{P_2}(\Delta\cap l_2)=1$ since 
$\mult_{P_2}(\Delta\cap\sigma_\infty)=2$. 
Since $c_2=4t-3d-c_1=3t/4$, we have $t\mult_{P_2}\Delta=3t/4+2(3t/4)$. 
This leads to a contradiction since $\mult_{P_2}\Delta\in\Z$. 
If $(d, \mult_{P_1}\Delta)=(4t/5, 5)$, then $\mult_{P_2}(\Delta\cap\sigma_\infty)=1$ and 
$c_2=4t-3d-c_1=3t/5$. 
Thus $t\mult_{P_2}\Delta=4t/5+(3t/5)\mult_{P_2}(\Delta\cap l_2)$. 
Hence $\mult_{P_2}\Delta=\mult_{P_2}(\Delta\cap l_2)=2$. 
If $t=5$, then this triplet is nothing but 
the type \textbf{$[$(10,5),3;10,20$]_{4\infty 53}$}. 
If $(d, \mult_{P_1}\Delta)=(5t/6, 6)$, then $\mult_{P_2}(\Delta\cap\sigma_\infty)=0$ 
and $c_2=t/2$. This contradicts that $\deg(\Delta\cap l_2)=2$.

Consequently, we have completed the proof of 
Theorems \ref{mainthm1} and \ref{mainthm2}.

\section{Some structure properties}\label{str_section}

\subsection{From $(a,b)$-fundamental triplets to $(a,b)$-basic pairs}\label{corr_section}

In this section, we see that normalized $(a, b)$-fundamental triplets are uniquely 
determined by the associated $(a, b)$-basic pairs. The proof is essentially same as 
\cite[Theorem 4.9]{N}. We recall the following proposition in order to prove 
Theorem \ref{kaburi_thm}.

\begin{proposition}[{\cite[Proposition 4.10]{N}}]\label{invol_prop}
Let $f\colon Y\to T$ be a proper surjective morphism from a nonsingular surface $Y$ 
to a nonsingular curve $T$ such that a general fiber is isomorphic to $\pr^1$. 
Let $E_1$ and $E_2$ be two sections of $f$ such that $E_1\cap E_2=\emptyset$ 
and $K_Y+E_1+E_2$ is $f$-numerically trivial. 
Assume that $f_*\sO_{E_1}(E_1)\simeq f_*\sO_{E_2}(E_2)$. Let $t\in\Z_{\geq 0}$. 
Let $\Gamma_{i1}+\Gamma_{i2}$ be a fiber of $f$ for any $1\leq i\leq t$, 
let $\hat{Y}\to Y$ be the blowing up along the all the intersection points 
$\Gamma_{i1}\cap\Gamma_{i2}$, and let $\hat{E}_1$, $\hat{E}_2$, $\hat{\Gamma}_{i1}$, 
$\hat{\Gamma}_{i2}$ be the strict transform of $E_1$, $E_2$, $\Gamma_{i1}$, 
$\Gamma_{i2}$, respectively. 
Then there is an involution $\iota$ of $\hat{Y}$ over $T$ such that 
$\iota(\hat{E}_1)=\hat{E}_2$ and $\iota(\hat{\Gamma}_{i1})=\hat{\Gamma}_{i2}$ for 
all $1\leq i\leq t$.
\end{proposition}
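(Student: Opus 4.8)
The plan is to realise $\iota$ as the fibrewise inversion interchanging the two sections, to fix its one free parameter by means of the normal–bundle hypothesis, and then to lift it through the prescribed blow-ups. Over the function field $k(T)$ the generic fibre $Y_\eta$ is a smooth conic carrying the two rational points $P_1:=E_1\cap Y_\eta$ and $P_2:=E_2\cap Y_\eta$; since $P_1$ is rational we have $Y_\eta\simeq\pr^1_{k(T)}$, and in a coordinate $z$ with $P_1=\{z=0\}$ and $P_2=\{z=\infty\}$ the involutions of $Y_\eta$ exchanging $P_1$ and $P_2$ are precisely the maps $\iota_c\colon z\mapsto c/z$ with $c\in k(T)^\times$. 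A change of coordinate $z\mapsto uz$ replaces $c$ by $u^2c$, so the genuine datum needed to single out $\iota_c$ is a simultaneous trivialisation of the normal directions of $E_1$ and $E_2$ along $T$; this is exactly what the isomorphism $f_*\sO_{E_1}(E_1)\simeq f_*\sO_{E_2}(E_2)$ supplies.

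To see that this hypothesis is the correct one, I would pass to a relatively minimal model. Contracting one $(-1)$-curve in each reducible fibre yields a $\pr^1$-bundle $Y_0=\pr(\sO_T\oplus\sN)\to T$ on which $E_1$ and $E_2$ descend to the two disjoint sections; a computation of normal bundles gives $\sO_{E_1}(E_1)\simeq\sN^{-1}$ and $\sO_{E_2}(E_2)\simeq\sN$, so the hypothesis is equivalent to $\sN^{\otimes2}\simeq\sO_T$. A choice of isomorphism $\theta\colon\sN^{\otimes2}\xrightarrow{\sim}\sO_T$ then determines the automorphism of $\sO_T\oplus\sN$ interchanging the two summands, hence an involution $\iota_0$ of $Y_0$ over $T$ with $\iota_0(E_1)=E_2$, whose restriction to $Y_\eta$ is one of the maps $\iota_c$. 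This both identifies the correct $c$ (up to the harmless sign coming from $\theta$) and shows that the inversion is compatible with the integral structure of the fibration.

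It then remains to extend the inversion to a regular involution of $Y$ and to incorporate the blow-ups. Away from the reducible fibres $\iota$ is clearly a morphism interchanging $E_1$ and $E_2$. At a node, working in local coordinates $(u,v)$ with $\Gamma_{i1}=\{v=0\}$, $\Gamma_{i2}=\{u=0\}$ and $f=uv$, the inversion takes the form $(u,v)\mapsto(\lambda v,\lambda^{-1}u)$ for a unit $\lambda$, which is a morphism fixing the node and interchanging the two components. Hence $\iota$ is a regular involution of $Y$ with $\iota(E_1)=E_2$, interchanging the two components of every reducible fibre and fixing each node $P_i:=\Gamma_{i1}\cap\Gamma_{i2}$. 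Since the centres of $\hat Y\to Y$ are the fixed points $P_1,\dots,P_t$, lying off $E_1\cup E_2$, the involution lifts to an involution $\hat\iota$ of $\hat Y$; on the exceptional curve $\Theta_i$ the induced map $w\mapsto\lambda^{-2}/w$ interchanges the directions of $\Gamma_{i1}$ and $\Gamma_{i2}$, giving $\hat\iota(\hat\Gamma_{i1})=\hat\Gamma_{i2}$, while $\hat\iota(\hat E_1)=\hat E_2$ and $\hat\iota^2=\mathrm{id}$.

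The hard part will be the globalisation in the second and third paragraphs: one must verify that the fibrewise constant $c$ assembles, through $\theta$, into a nowhere-vanishing global datum, so that $\iota_c$ spreads out to a regular self-map of all of $Y$ rather than a merely birational one, and that the component contracted in each reducible fibre can be chosen consistently so that $E_1$ and $E_2$ really descend to disjoint sections of a single $\pr^1$-bundle. Once the map is regular and fibre-preserving, the remaining verifications (that it is an involution and has the stated action on the sections and on the components of the chosen fibres) reduce to the elementary local computation above and to the fact that blowing up a fixed point lifts an involution.
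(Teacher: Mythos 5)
You get the easy half of the argument right, and it is in fact all that the paper itself does: the paper quotes Nakayama's Proposition 4.10 as a black box to obtain an involution $\iota'$ of $Y$ over $T$ with $\iota'(E_1)=E_2$, observes that $\iota'$ must interchange $\Gamma_{i1}$ and $\Gamma_{i2}$ --- since $(K_Y+E_1+E_2\cdot\Gamma_{ij})=0$ and $(K_Y\cdot\Gamma_{ij})=-1$ force each component to meet exactly one of the two sections --- hence fixes each node, and then lifts $\iota'$ through the blow-up $\hat{Y}\to Y$. Your closing paragraph is this same lifting argument. What the paper never attempts, and what you do attempt, is a proof of Nakayama's statement itself, and that is where your proposal has a genuine error, not merely a deferred verification.

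The error is in the passage to the relatively minimal model. Contracting a $(-1)$-curve that meets, say, $E_2$ twists the normal bundle of the image section by the corresponding point of $T$: writing $\pi\colon Y\to Y_0$ and $E_2^0=\pi(E_2)$, one has $f_*\sO_{E_2^0}(E_2^0)\simeq f_*\sO_{E_2}(E_2)\otimes\sO_T(t_i)$, while $E_1$ is untouched over $t_i$. Consequently the hypothesis $f_*\sO_{E_1}(E_1)\simeq f_*\sO_{E_2}(E_2)$, which lives on $Y$, translates on $Y_0=\pr(\sO_T\oplus\sN)$ into $\sN^{\otimes 2}\simeq\sO_T(D_2-D_1)$, where $D_h$ is the divisor of points of $T$ lying below contracted curves meeting $E_h$; it is \emph{not} equivalent to $\sN^{\otimes 2}\simeq\sO_T$ when $Y$ has reducible fibers, which is exactly the situation $t\geq 1$ that the proposition addresses and in which the paper applies it. Worse, even when the summand-swapping involution $\iota_0$ of $Y_0$ does exist, it cannot be the involution you need: $\iota_0$ sends each blow-up center $q_i$ (a point of one section) to a point of the \emph{other} section in the same fiber of $Y_0\to T$, which is not a center, so the induced birational self-map of $Y$ contracts the curves lying over the $q_i$ instead of being a morphism. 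The genuine involution of $Y$ restricts on the generic fiber to $z\mapsto c/z$ with $c$ having a zero or pole of order one at each critical value (a direct computation on the blow-up of $\pr^1\times T$ at a point of the section $\{z=\infty\}$ gives $c=\lambda/s$ in a local parameter $s$), which contradicts the nowhere-vanishing $c$ your second paragraph is designed to produce; your local normal form $(u,v)\mapsto(\lambda v,\lambda^{-1}u)$ at the node is the right shape but is asserted rather than derived, and it is incompatible with that $c$. (A minor additional point: the hypotheses allow reducible fibers that are chains containing $(-2)$-curves, so contracting one $(-1)$-curve per reducible fiber need not yield a $\pr^1$-bundle in one step.) Repairing all of this --- building the twist by $D_2-D_1$ into the descent datum and checking that the resulting birational inversion becomes biregular precisely on $Y$ --- amounts to proving Nakayama's Proposition 4.10; the paper avoids this entirely by citation.
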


\begin{proof}
By \cite[Proposition 4.10]{N}, there exists an involution $\iota'$ of $Y$ over $T$ such 
that $\iota'(E_1)=E_2$. The involution $\iota'$ fixes the intersection point 
$\Gamma_{i1}\cap\Gamma_{i2}$ for any $i$. Thus $\iota'$ lifts to an involution $\iota$ 
of $\hat{Y}$. Then the assertion follows. 
\end{proof}

\begin{thm}\label{kaburi_thm}
Let $a$, $b$ be positive integers with $1/2\leq b/a<1$, $b>1$, 
and let $(X, E, \Delta)$ be a normalized $(a, b)$-fundamental triplet. 
Then the isomorphism class of $(X, E, \Delta)$ depends only on the 
isomorphism class of the associated normalized $(a, b)$-basic pair $(M, E_M)$.
\end{thm}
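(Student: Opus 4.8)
The plan is to show that every ingredient of the triplet can be reconstructed, up to isomorphism, from the associated basic pair $(M,E_M)$, so that an isomorphism $(M_1,E_{M_1})\simeq(M_2,E_{M_2})$ (for the fixed $a$, $b$) forces an isomorphism of the triplets. First I observe that the fundamental divisor $L_M$, being the unique (up to linear equivalence, since $\Pic M$ is torsion-free and $b\neq 0$) solution of $bL_M\sim-aK_M-E_M$, is intrinsic to the pair; hence so is the nef divisor $K_M+L_M=\phi^*(K_X+L)$ by Lemma \ref{triplet_lem}\eqref{triplet_lem1}. Following \cite[Theorem 4.9]{N}, I split into the cases where $K_M+L_M$ is big and where it is not.

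Suppose first that $K_M+L_M$ is big. Running through the classification (Sections \ref{p2_section} and \ref{big_section}) one checks that $K_X+L$ is ample on $X$ in every such case except $X\simeq\F_2$ with $L\sim 3\sigma+6l$, where $K_X+L\sim\sigma+2l$ is nef and big but trivial on the $(-2)$-curve $\sigma$. In all cases set $\bar X:=\Proj\bigoplus_{m\geq 0}H^0(M,m(K_M+L_M))$ and let $\psi\colon M\to\bar X$ be the induced birational morphism. Then $\psi$ contracts precisely the curves $C$ with $(K_M+L_M\cdot C)=0$, so $\bar X$ is smooth and equal to $X$ in the ample case, while in the remaining case $\bar X$ carries a single $A_1$-singularity whose minimal resolution is $X$. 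As $\psi$ is birational and $M$ is smooth, $\psi$ factors through the minimal resolution of $\bar X$, so both $X$ and $\phi$ are recovered intrinsically; then $E=\phi_*E_M$ and the ideal $\sI_\Delta=\phi_*\sO_M(-K_{M/X})$ of $\Delta$ (Proposition \ref{elim_prop}\eqref{elim_prop2}) are determined. This settles the big case.

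Now suppose $K_M+L_M$ is not big. Then $X\simeq\F_n$ and $K_X+L\sim(h-n-2)l$ is a positive multiple of a fibre, so $K_M+L_M$ is a positive multiple of the class of a fibre of a $\pr^1$-fibration $g\colon M\to\pr^1$, which is again recovered as $\Proj\bigoplus_m H^0(M,m(K_M+L_M))$ and is intrinsic to the pair. To recover $X$ one contracts the vertical components of $g$ down to a $\pr^1$-bundle. By Claim \ref{sigma_empty_claim} the minimal section satisfies $\sigma\leq E$ and $\Delta\cap\sigma=\emptyset$, so its strict transform $\sigma_M$ is a section of $g$ and a component of $E_M$; the normalization conditions \eqref{fund_dfn71} and \eqref{fund_dfn72} of Definition \ref{fund_dfn} then single out which component of $E_M$ must become the minimal section, and hence which contraction $M\to\F_n$ is performed. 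The only remaining freedom is the choice between two sections that both realize the equality case of \eqref{fund_dfn72} (as in the types carrying a component $\sigma_\infty$), together with the choice of which end of each reducible fibre of $g$ to contract.

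The main obstacle is precisely this last ambiguity, and it is resolved by Proposition \ref{invol_prop}. Taking for $E_1$, $E_2$ the two candidate sections on the relevant Hirzebruch model $Y$ with its $\pr^1$-fibration $f\colon Y\to\pr^1$, and verifying the hypotheses — that $K_Y+E_1+E_2$ is $f$-numerically trivial and that the normal bundles satisfy $f_*\sO_{E_1}(E_1)\simeq f_*\sO_{E_2}(E_2)$ — produces, after blowing up the nodes of the chosen reducible fibres, an involution $\iota$ swapping $E_1\leftrightarrow E_2$ and the two sides of each such fibre. This involution induces an isomorphism between the two candidate triplets $(X,E,\Delta)$, so the normalization conditions determine $(X,E,\Delta)$ up to isomorphism. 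Combining the two cases shows that the isomorphism class of $(X,E,\Delta)$ depends only on that of $(M,E_M)$, as claimed.
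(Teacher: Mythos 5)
Your proposal is correct and takes essentially the same route as the paper's proof: both rest on the intrinsic dichotomy of whether $K_M+L_M$ is big, recover $\phi$ (resp.\ the $\pr^1$-fibration) from the semi-ample divisor $K_M+L_M$, let the conditions of Definition \ref{fund_dfn} \eqref{fund_dfn7} pin down which section component of $E_M$ becomes the minimal section, and invoke Proposition \ref{invol_prop} precisely for the types in which two section components of $E_M$ have equal coefficients and are therefore interchangeable. The paper simply leaves the big case implicit (``we may assume $K_X+L$ is not big,'' deferring to \cite[Theorem 4.9]{N}) and cites \cite[Lemma 4.5]{N} for the fact that the choice of minimal section determines the contraction; your ample-model construction of $\bar X$, the $A_1$/minimal-resolution observation for $\F_2$ with $L\sim 3\sigma+6l$, and the reconstruction of $\sI_\Delta$ via Proposition \ref{elim_prop} \eqref{elim_prop2} fill in those steps rather than replace them.
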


\begin{proof}
We may assume that $K_X+L$ is not big. Let $(X', E', \Delta')$ be another 
normalized $(a, b)$-fundamental triplet whose associated $(a, b)$-basic pair 
is $(M, E_M)$. Let $\phi\colon M\to X$, $\phi'\colon M\to X'$ be the elimination 
of $\Delta$, $\Delta'$, respectively. Then $\pi\circ\phi=\pi'\circ\phi'$ holds, where 
$\pi\colon X\to\pr^1$ and $\pi'\colon X'\to\pr^1$ are the fibrations. 
Then the types of $(X, E, \Delta)$ and $(X', E', \Delta')$ are same. 
Moreover, 
$\phi\simeq\phi'$ over $\pr^1$ excepts for the case that the type is one of 
\textbf{$[$(3,2),3;2,3$]_{1\infty}$}, \textbf{$[$(5,3),3;4,7$]_{2\infty 1}$} or 
\textbf{$[$(4,2),3;4,8$]_{2\infty 11}$} by \cite[Lemma 4.5]{N}. 

Assume that the type is \textbf{$[$(3,2),3;2,3$]_{1\infty}$}. 
Then $E_M=\sigma_M+\sigma_{\infty,M}$, where $\sigma_M$, $\sigma_{\infty,M}$ is 
the strict transform of $\sigma$, $\sigma_\infty$, respectively. 
Then there exists an involution $\iota$ of $M$ over $\pr^1$ such that 
$\iota(\sigma_M)=\sigma_{\infty,M}$ by Proposition \ref{invol_prop}. 
Thus $\phi'\simeq\phi\circ\iota$ and $(X, E, \Delta)\simeq(X', E', \Delta')$. 

Assume that the type is \textbf{$[$(5,3),3;4,7$]_{2\infty 1}$}. 
Let $P:=\sigma_\infty\cap l$, where $\sigma_\infty$, $l\leq E$. 
Let $\Gamma_1$, $\Gamma_2$ be the exceptional curve on $M$ over $P$ with 
$(\Gamma_1^2)=-2$, $(\Gamma_2^2)=-1$. 
Then $E_M=2\sigma_M+l_M+\Gamma_1+2\sigma_{\infty,M}$, where 
$l_M$, $\sigma_M$, $\sigma_{\infty,M}$ is the strict transform of $l$, $\sigma$, 
$\sigma_{\infty}$, respectively. 
Then there exists an involution $\iota$ of $M$ over $\pr^1$ such that 
$\iota(\sigma_M)=\sigma_{\infty,M}$ and $\iota(l_M)=\Gamma_1$ 
by Proposition \ref{invol_prop} (apply Proposition \ref{invol_prop} for the surface $M'$, 
where $M\to M'$ is obtained by contracting $\Gamma_2$). 
Thus $\phi'\simeq\phi\circ\iota$ and $(X, E, \Delta)\simeq(X', E', \Delta')$. 

Assume that the type is \textbf{$[$(4,2),3;4,8$]_{2\infty 11}$}. 
Let $P_i:=\sigma_\infty\cap l_i$, where $\sigma_\infty$, $l_i\leq E$ 
$(i=1$, $2)$. 
Let $\Gamma_{i1}$, $\Gamma_{i2}$ be the exceptional curves on $M$ over $P_i$ with 
$(\Gamma_{i1}^2)=-2$, $(\Gamma_{i2}^2)=-1$. 
Then $E_M=2\sigma_M+l_{1M}+l_{2M}+\Gamma_{11}+\Gamma_{21}+2\sigma_{\infty,M}$, 
where $l_{1M}$, $l_{2M}$, $\sigma_M$, $\sigma_{\infty,M}$ is the strict transform of 
$l_1$, $l_2$, $\sigma$, 
$\sigma_{\infty}$, respectively. 
Then there exists an involution $\iota$ of $M$ over $\pr^1$ such that 
$\iota(\sigma_M)=\sigma_{\infty,M}$, $\iota(l_{1M})=\Gamma_{11}$ 
and $\iota(l_{2M})=\Gamma_{21}$ 
by Proposition \ref{invol_prop} (apply Proposition \ref{invol_prop} for the surface $M'$, 
where $M\to M'$ is obtained by contracting $\Gamma_{12}$ and $\Gamma_{22}$). 
Thus $\phi'\simeq\phi\circ\iota$ and $(X, E, \Delta)\simeq(X', E', \Delta')$. 
\end{proof}

\begin{table}[htp]
\caption{The list of the dual graphs of $E$ ($1/2<b/a<1$).}\label{maintable1}
\begin{tabular}{|c|c|}\hline
\textbf{$[$(3,2),1$]_0$} &
\begin{picture}(10,10)(0,0)
     \put(5, 3){\circle*{6}}
     \put(5, 3){\circle{10}}
\end{picture}
\\ \hline
\textbf{$[$(11,7),5$]_0$} &
\begin{picture}(80,10)(0,0)
     \put(5, 3){\circle*{6}}
     \put(5, 3){\circle{10}}
     \put(10, 3){\line(1, 0){8}}
     \put(23, 3){\circle*{10}}
     \put(28, 3){\line(1, 0){8}}
     \put(41, 3){\circle*{10}}
     \put(46, 3){\line(1, 0){8}}
     \put(59, 3){\circle*{10}}
     \put(64, 3){\line(1, 0){8}}
     \put(75, 3){\circle*{10}}
\end{picture}
\\ \hline
\textbf{$[$(5,3),3$]_0$(1)} &
\begin{picture}(80,9)(0,0)
     \put(5, 3){\circle*{6}}
     \put(5, 3){\circle{10}}
     \put(10, 3){\line(1, 0){8}}
     \put(23, 3){\circle*{10}}
     \put(28, 3){\line(1, 0){8}}
     \put(41, 3){\circle*{10}}
     \put(46, 3){\line(1, 0){8}}
     \put(59, 3){\circle*{10}}
     \put(64, 3){\line(1, 0){8}}
     \put(75, 3){\circle*{10}}
     \put(23, -1){\line(0, -1){8}}
     \put(23, -11){\circle*{10}}
\end{picture}
\\
 & 
\\ \hline
\textbf{$[$(5,3),3$]_0$(2)} &
\begin{picture}(80,10)(0,0)
     \put(5, 3){\circle*{10}}
     \put(10, 3){\line(1, 0){8}}
     \put(23, 3){\circle*{10}}
     \put(28, 3){\line(1, 0){8}}
     \put(41, 3){\circle*{6}}
     \put(41, 3){\circle{10}}
     \put(46, 3){\line(1, 0){8}}
     \put(59, 3){\circle*{10}}
     \put(64, 3){\line(1, 0){8}}
     \put(75, 3){\circle*{10}}
\end{picture}
\\ \hline
\textbf{$[$(9,5),7$]_{\times 43}$} &
\begin{picture}(85,10)(0,0)
     \put(5, 3){\circle*{6}}
     \put(5, 3){\circle{10}}
     \put(10, 3){\line(1, 0){8}}
     \put(23, 3){\circle*{10}}
     \put(28, 3){\line(1, 0){8}}
     \put(41, 3){\circle*{10}}
     \put(46, 3){\line(1, 0){8}}
     \put(59, 3){\circle*{10}}
    \put(70, 0){\makebox(0, 0)[b]{$\sqcup$}}
     \put(82, 3){\circle*{6}}
     \put(82, 3){\circle{10}}
\end{picture}
\\ \hline
\textbf{$[$(5,3),2;1,2$]_1$} &
\begin{picture}(25,10)(0,0)
     \put(5, 3){\circle*{6}}
     \put(5, 3){\circle{10}}
     \put(10, 3){\line(1, 0){8}}
     \put(23, 3){\circle*{10}}
\end{picture}
\\ \hline
\textbf{$[$(7,4),2;2,4$]_1$} &
\begin{picture}(80,10)(0,0)
     \put(5, 3){\circle*{10}}
     \put(10, 3){\line(1, 0){8}}
     \put(23, 3){\circle*{6}}
     \put(23, 3){\circle{10}}
     \put(28, 3){\line(1, 0){8}}
     \put(41, 3){\circle*{10}}
     \put(46, 3){\line(1, 0){8}}
     \put(59, 3){\circle*{10}}
     \put(64, 3){\line(1, 0){8}}
     \put(75, 3){\circle*{10}}
\end{picture}
\\ \hline
\textbf{$[$(13,7),2;5,10$]_1$} &
\begin{picture}(80,9)(0,0)
     \put(5, 3){\circle*{10}}
     \put(10, 3){\line(1, 0){8}}
     \put(23, 3){\circle*{6}}
     \put(23, 3){\circle{10}}
     \put(28, 3){\line(1, 0){8}}
     \put(41, 3){\circle*{10}}
     \put(46, 3){\line(1, 0){8}}
     \put(59, 3){\circle*{10}}
     \put(64, 3){\line(1, 0){8}}
     \put(76, 3){\circle*{10}}
     \put(41, -1){\line(0, -1){9}}
     \put(41, -12){\circle*{10}}
\end{picture}
\\
 & 
\\ \hline
\textbf{$[$(21,11),2;9,7$]_1$} &
\begin{picture}(70,10)(0,0)
     \put(5, 3){\circle*{6}}
     \put(5, 3){\circle{10}}
     \put(10, 3){\line(1, 0){8}}
     \put(23, 3){\circle*{10}}
     \put(27, 3){\line(1, 0){10}}
     \put(42, 3){\circle*{10}}
     \put(55, 0){\makebox(0, 0)[b]{$\sqcup$}}
     \put(68, 3){\circle*{6}}
     \put(68, 3){\circle{10}}
\end{picture}
\\ \hline
\textbf{$[$(2{\bi n}-1,{\bi n}+1),{\bi n};2({\bi n}-2),{\bi n}-2$]_1$} &
\begin{picture}(25,10)(0,0)
     \put(5, -6){\makebox(0, 0){\textcircled{\tiny $n$}}}
     \put(10, -4){\line(1, 0){10}}
     \put(23, -4){\circle*{10}}
\end{picture}
\\
\textbf{$[$(2{\bi m}+1,{\bi m}+1),3{\bi m}+2;2{\bi m},{\bi m}$]_1$} 
\,\scriptsize{$(n=3m+2)$}
&
\\ \hline
\textbf{$[$(4{\bi n}-3,2{\bi n}+1),{\bi n};4({\bi n}-2),3({\bi n}-2)$]_1$} &
\begin{picture}(65,10)(0,0)
     \put(5, -6){\makebox(0, 0){\textcircled{\tiny $n$}}}
     \put(10, -4){\line(1, 0){10}}
     \put(23, -4){\circle*{10}}
     \put(28, -4){\line(1, 0){8}}
     \put(41, -4){\circle*{10}}
     \put(46, -4){\line(1, 0){8}}
     \put(59, -4){\circle*{10}}     
\end{picture}
\\
\textbf{$[$(4{\bi m}+1,2{\bi m}+1),5{\bi m}+2;4{\bi m},3{\bi m}$]_1$} 
\,\scriptsize{$(n=5m+2)$} & 
\\ \hline
\textbf{$[$(2{\bi n}-2,{\bi n}),{\bi n};2({\bi n}-2),2({\bi n}-2)$]_{11}$} &
\begin{picture}(45,10)(0,0)
     \put(5, -4){\circle*{10}}
     \put(10, -4){\line(1, 0){8}}
     \put(23, -6){\makebox(0, 0){\textcircled{\tiny $n$}}}
     \put(28, -4){\line(1, 0){10}}
     \put(42, -4){\circle*{10}}
\end{picture}
\\
\textbf{$[$(2{\bi m}+1,{\bi m}+1),2{\bi m}+2;2{\bi m},2{\bi m}$]_{11}$} 
\,\scriptsize{$(n=2m+2)$} &
\\ \hline
\textbf{$[$(2{\bi n}-2,{\bi n}),{\bi n};2({\bi n}-2),2({\bi n}-2)$]_1$(1)} &
\begin{picture}(60,9)(0,0)
     \put(5, 1){\makebox(0, 0){\textcircled{\tiny $n$}}}
     \put(10, 3){\line(1, 0){8}}
     \put(23, 3){\circle*{10}}
     \put(28, 3){\line(1, 0){8}}
     \put(41, 3){\circle*{10}}
     \put(46, 3){\line(1, 0){8}}
     \put(59, 3){\circle*{10}}
     \put(41, -1){\line(0, -1){9}}
     \put(41, -12){\circle*{10}}
\end{picture}
\\
\textbf{$[$(2{\bi m}+1,{\bi m}+1),2{\bi m}+2;2{\bi m},2{\bi m}$]_1$(1)} 
\,\scriptsize{$(n=2m+2)$} & 
\\ \hline
\textbf{$[$(2{\bi n}-2,{\bi n}),{\bi n};2({\bi n}-2),2({\bi n}-2)$]_1$(2)} &
\begin{picture}(45,9)(0,0)
     \put(5, 1){\makebox(0, 0){\textcircled{\tiny $n$}}}
     \put(10, 3){\line(1, 0){8}}
     \put(23, 3){\circle*{10}}
     \put(28, 3){\line(1, 0){8}}
     \put(41, 3){\circle*{10}}
     \put(23, -1){\line(0, -1){9}}
     \put(23, -12){\circle*{10}}
\end{picture}
\\
\textbf{$[$(2{\bi m}+1,{\bi m}+1),2{\bi m}+2;2{\bi m},2{\bi m}$]_1$(2)} 
\,\scriptsize{$(n=2m+2)$} & 
\\ \hline
\textbf{$[$(4{\bi n}-5,2{\bi n}-1),{\bi n};4({\bi n}-2),5({\bi n}-2)$]_{32}$} &
\begin{picture}(80,10)(0,0)
     \put(5, -4){\circle*{10}}
     \put(10, -4){\line(1, 0){8}}
     \put(23, -6){\makebox(0, 0){\textcircled{\tiny $n$}}}
     \put(28, -4){\line(1, 0){8}}
     \put(40, -4){\circle*{10}}
     \put(45, -4){\line(1, 0){8}}
     \put(58, -4){\circle*{10}}
     \put(63, -4){\line(1, 0){8}}
     \put(76, -4){\circle*{10}}
\end{picture}
\\
\textbf{$[$(4{\bi m}+1,2{\bi m}+1),3{\bi m}+2;4{\bi m},5{\bi m}$]_{32}$} 
\,\scriptsize{$(n=3m+2)$} & 
\\ \hline
\textbf{$[$(7,5),3;4,5$]_1$} &
\begin{picture}(80,9)(0,0)
     \put(5, 3){\circle*{6}}
     \put(5, 3){\circle{10}}
     \put(10, 3){\line(1, 0){8}}
     \put(23, 3){\circle*{10}}
     \put(28, 3){\line(1, 0){8}}
     \put(41, 3){\circle*{10}}
     \put(46, 3){\line(1, 0){8}}
     \put(59, 3){\circle*{10}}
     \put(64, 3){\line(1, 0){8}}
     \put(75, 3){\circle*{10}}
     \put(41, -1){\line(0, -1){8}}
     \put(41, -11){\circle*{10}}
\end{picture}
\\
 & 
\\ \hline
\textbf{$[$(2{\bi n}-3,{\bi n}-1),{\bi n};2({\bi n}-2),3({\bi n}-2)$]_{111}$} &
\begin{picture}(45,9)(0,0)
     \put(5, 3){\circle*{10}}
     \put(10, 3){\line(1, 0){8}}
     \put(23, 1){\makebox(0, 0){\textcircled{\tiny $n$}}}
     \put(28, 3){\line(1, 0){8}}
     \put(41, 3){\circle*{10}}
     \put(23, -2){\line(0, -1){9}}
     \put(23, -12){\circle*{10}}
\end{picture}
\\
 & 
\\ \hline
\textbf{$[$(2{\bi n}-3,{\bi n}-1),{\bi n};2({\bi n}-2),3({\bi n}-2)$]_{21}$(1)} &
\begin{picture}(80,9)(0,0)
     \put(5, 3){\circle*{10}}
     \put(10, 3){\line(1, 0){8}}
     \put(23, 1){\makebox(0, 0){\textcircled{\tiny $n$}}}
     \put(28, 3){\line(1, 0){8}}
     \put(41, 3){\circle*{10}}
     \put(46, 3){\line(1, 0){8}}
     \put(59, 3){\circle*{10}}
     \put(64, 3){\line(1, 0){8}}
     \put(77, 3){\circle*{10}}
     \put(59, -1){\line(0, -1){9}}
     \put(59, -12){\circle*{10}}
\end{picture}
\\
& 
\\ \hline
\textbf{$[$(2{\bi n}-3,{\bi n}-1),{\bi n};2({\bi n}-2),3({\bi n}-2)$]_{21}$(2)} &
\begin{picture}(60,9)(0,0)
     \put(5, 3){\circle*{10}}
     \put(10, 3){\line(1, 0){8}}
     \put(23, 1){\makebox(0, 0){\textcircled{\tiny $n$}}}
     \put(28, 3){\line(1, 0){8}}
     \put(41, 3){\circle*{10}}
     \put(46, 3){\line(1, 0){8}}
     \put(59, 3){\circle*{10}}
     \put(41, -1){\line(0, -1){9}}
     \put(41, -12){\circle*{10}}
\end{picture}
\\
& 
\\ \hline
\textbf{$[$(4{\bi n}-6,2{\bi n}-2),{\bi n};4({\bi n}-2),6({\bi n}-2)$]_{11}$} &
\begin{picture}(115,9)(0,0)
     \put(5, -4){\circle*{10}}
     \put(10, -4){\line(1, 0){8}}
     \put(23, -4){\circle*{10}}
     \put(28, -4){\line(1, 0){8}}
     \put(41, -4){\circle*{10}}
     \put(46, -4){\line(1, 0){8}}
     \put(59, -6){\makebox(0, 0){\textcircled{\tiny $n$}}}
     \put(64, -4){\line(1, 0){8}}
     \put(75, -4){\circle*{10}}
     \put(80, -4){\line(1, 0){8}}
     \put(93, -4){\circle*{10}}
     \put(98, -4){\line(1, 0){8}}
     \put(111, -4){\circle*{10}}
\end{picture}
\\
\textbf{$[$(4{\bi m}+1,2{\bi m}+1),2{\bi m}+2;4{\bi m},6{\bi m}$]_{11}$} 
\,\scriptsize{$(n=2m+2)$} &
\\ \hline
\textbf{$[$(3,2),3;2,3$]_{1\infty}$} &
\begin{picture}(40,10)(0,0)
     \put(5, 3){\circle*{6}}
     \put(5, 3){\circle{10}}
     \put(18, 0){\makebox(0, 0)[b]{$\sqcup$}}
     \put(31, 3){\circle*{6}}
     \put(31, 3){\circle{10}}
\end{picture}
\\ \hline
\textbf{$[$(4{\bi n}-7,2{\bi n}-3),{\bi n};4({\bi n}-2),7({\bi n}-2)$]_{322}$} &
\begin{picture}(80,10)(0,0)
     \put(5, 3){\circle*{10}}
     \put(10, 3){\line(1, 0){8}}
     \put(23, 1){\makebox(0, 0){\textcircled{\tiny $n$}}}
     \put(28, 3){\line(1, 0){8}}
     \put(40, 3){\circle*{10}}
     \put(45, 3){\line(1, 0){8}}
     \put(58, 3){\circle*{10}}
     \put(63, 3){\line(1, 0){8}}
     \put(76, 3){\circle*{10}}
     \put(23, -2){\line(0, -1){9}}
     \put(23, -12){\circle*{10}}
\end{picture}
\\
 & 
\\ \hline
\textbf{$[$(4{\bi n}-7,2{\bi n}-3),{\bi n};4({\bi n}-2),7({\bi n}-2)$]_{43}$(1)} &
\begin{picture}(115,9)(0,0)
     \put(5, 3){\circle*{10}}
     \put(10, 3){\line(1, 0){8}}
     \put(23, 3){\circle*{10}}
     \put(28, 3){\line(1, 0){8}}
     \put(41, 3){\circle*{10}}
     \put(46, 3){\line(1, 0){8}}
     \put(59, 1){\makebox(0, 0){\textcircled{\tiny $n$}}}
     \put(64, 3){\line(1, 0){8}}
     \put(75, 3){\circle*{10}}
     \put(80, 3){\line(1, 0){8}}
     \put(93, 3){\circle*{10}}
     \put(98, 3){\line(1, 0){8}}
     \put(111, 3){\circle*{10}}
     \put(23, -1){\line(0, -1){9}}
     \put(23, -12){\circle*{10}}
\end{picture}
\\
&
\\ \hline
\textbf{$[$(4{\bi n}-7,2{\bi n}-3),{\bi n};4({\bi n}-2),7({\bi n}-2)$]_{43}$(2)} &
\begin{picture}(100,9)(0,0)
     \put(5, 3){\circle*{10}}
     \put(10, 3){\line(1, 0){8}}
     \put(23, 3){\circle*{10}}
     \put(28, 3){\line(1, 0){8}}
     \put(41, 1){\makebox(0, 0){\textcircled{\tiny $n$}}}
     \put(46, 3){\line(1, 0){8}}
     \put(59, 3){\circle*{10}}
     \put(64, 3){\line(1, 0){8}}
     \put(77, 3){\circle*{10}}
     \put(82, 3){\line(1, 0){8}}
     \put(95, 3){\circle*{10}}
     \put(23, -1){\line(0, -1){9}}
     \put(23, -12){\circle*{10}}
\end{picture}
\\
&
\\ \hline
\textbf{$[$(15,9),3;12,21$]_{5\infty 1}$} &
\begin{picture}(140,10)(0,0)
     \put(5, 3){\circle*{6}}
     \put(5, 3){\circle{10}}
     \put(10, 3){\line(1, 0){8}}
     \put(23, 3){\circle*{10}}
     \put(28, 3){\line(1, 0){8}}
     \put(41, 3){\circle*{10}}
     \put(46, 3){\line(1, 0){8}}
     \put(59, 3){\circle*{10}}
     \put(64, 3){\line(1, 0){8}}
     \put(75, 3){\circle*{10}}
     \put(80, 3){\line(1, 0){8}}
     \put(93, 3){\circle*{10}}
     \put(98, 3){\line(1, 0){8}}
     \put(111, 3){\circle*{10}}
     \put(124, 0){\makebox(0, 0)[b]{$\sqcup$}}
     \put(137, 3){\circle*{6}}
     \put(137, 3){\circle{10}}
\end{picture}
\\ \hline
\textbf{$[$(5,3),3;4,7$]_{2\infty 1}$} &
\begin{picture}(80,10)(0,0)
     \put(5, 3){\circle*{6}}
     \put(5, 3){\circle{10}}
     \put(10, 3){\line(1, 0){8}}
     \put(23, 3){\circle*{10}}
     \put(36, 0){\makebox(0, 0)[b]{$\sqcup$}}
     \put(49, 3){\circle*{6}}
     \put(49, 3){\circle{10}}
     \put(54, 3){\line(1, 0){8}}
     \put(67, 3){\circle*{10}}
\end{picture}
\\ \hline
\end{tabular}
\end{table}

\begin{table}[h]
\caption{The list of the dual graphs of $E$ ($b/a=1/2$).}\label{maintable2}
\begin{tabular}{|c|c|}\hline
\textbf{$[$(6,3),6$]_{\times 21}$} & 
\begin{picture}(145,9)(0,0)
     \put(5, 3){\circle*{10}}
     \put(10, 3){\line(1, 0){8}}
     \put(23, 3){\circle*{10}}
     \put(28, 3){\line(1, 0){8}}
     \put(41, 3){\circle*{10}}
     \put(46, 3){\line(1, 0){8}}
     \put(59, 3){\circle*{6}}
     \put(59, 3){\circle{10}}
     \put(64, 3){\line(1, 0){8}}
     \put(75, 3){\circle*{10}}
     \put(80, 3){\line(1, 0){8}}
     \put(93, 3){\circle*{10}}
     \put(98, 3){\line(1, 0){8}}
     \put(111, 3){\circle*{10}}
     \put(124, 0){\makebox(0, 0)[b]{$\sqcup$}}
     \put(137, 3){\circle*{6}}
     \put(137, 3){\circle{10}}
\end{picture}
\\ \hline
\textbf{$[$(6,3),3;6,12$]_{2\infty 11}$} & 
\begin{picture}(145,9)(0,0)
     \put(5, 3){\circle*{10}}
     \put(10, 3){\line(1, 0){8}}
     \put(23, 3){\circle*{10}}
     \put(28, 3){\line(1, 0){8}}
     \put(41, 3){\circle*{10}}
     \put(46, 3){\line(1, 0){8}}
     \put(59, 3){\circle*{6}}
     \put(59, 3){\circle{10}}
     \put(64, 3){\line(1, 0){8}}
     \put(75, 3){\circle*{10}}
     \put(80, 3){\line(1, 0){8}}
     \put(93, 3){\circle*{10}}
     \put(98, 3){\line(1, 0){8}}
     \put(111, 3){\circle*{10}}
     \put(124, 0){\makebox(0, 0)[b]{$\sqcup$}}
     \put(137, 3){\circle*{6}}
     \put(137, 3){\circle{10}}
\end{picture}
\\ \hline
\textbf{$[$(10,5),3;10,20$]_{4\infty 53}$} & 
\begin{picture}(160,9)(0,0)
     \put(5, 3){\circle*{10}}
     \put(10, 3){\line(1, 0){8}}
     \put(23, 3){\circle*{6}}
     \put(23, 3){\circle{10}}
     \put(28, 3){\line(1, 0){8}}
     \put(41, 3){\circle*{10}}
     \put(46, 3){\line(1, 0){8}}
     \put(59, 3){\circle*{10}}
     \put(64, 3){\line(1, 0){8}}
     \put(75, 3){\circle*{10}}
     \put(80, 3){\line(1, 0){8}}
     \put(93, 3){\circle*{10}}
     \put(98, 3){\line(1, 0){8}}
     \put(111, 3){\circle*{10}}
     \put(124, 0){\makebox(0, 0)[b]{$\sqcup$}}
     \put(137, 3){\circle*{6}}
     \put(137, 3){\circle{10}}
     \put(142, 3){\line(1, 0){8}}
     \put(155, 3){\circle*{10}}
\end{picture}
\\ \hline
\textbf{$[$(4,2),3;4,8$]_{2\infty 11}$} & 
\begin{picture}(105,9)(0,0)
     \put(5, 3){\circle*{10}}
     \put(10, 3){\line(1, 0){8}}
     \put(23, 3){\circle*{6}}
     \put(23, 3){\circle{10}}
     \put(28, 3){\line(1, 0){8}}
     \put(41, 3){\circle*{10}}
     \put(54, 0){\makebox(0, 0)[b]{$\sqcup$}}
     \put(67, 3){\circle*{10}}
     \put(72, 3){\line(1, 0){8}}
     \put(85, 3){\circle*{6}}
     \put(85, 3){\circle{10}}
     \put(90, 3){\line(1, 0){8}}
     \put(103, 3){\circle*{10}}
\end{picture}

\\ \hline
\end{tabular}
\end{table}

\subsection{Non-Gorenstein dual graphs}\label{graph_section}

As an immediate corollary of Theorems \ref{mainthm1} and \ref{mainthm2}, we can 
check the dual graph of $E$ for any normalized $(a, b)$-basic pair $(M, E)$. 
We note that the dual graph of $E$ is nothing but the dual graph of the minimal 
resolution of non-Gorenstein singular points on $S$, where $S$ is the log del Pezzo 
surface of the normalized multi-index $(a,b)$ corresponds to $(M, E)$.

\begin{corollary}
Let $a$, $b\in\Z_{>0}$ with $1/2\leq b/a<1$, $b>1$, 
$S$ be a log del Pezzo surface of the normalized multi-index $(a,b)$ and 
$(X, E, \Delta)$ be the associated normalized $(a, b)$-fundamental triplet. 
The dual graph of the minimal resolution of non-Gorenstein singular points on $S$ 
is determined by the type of $(X, E, \Delta)$ and 
are listed in Tables \ref{maintable1} and 
\ref{maintable2}. 
\end{corollary}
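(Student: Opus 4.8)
The plan is to derive the Corollary directly from the classification in Theorems \ref{mainthm1} and \ref{mainthm2}, reading off each dual graph from the explicit elimination described in Section \ref{elim_subsec}.

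First I would fix the dictionary between the three objects. Given $S$ of normalized multi-index $(a,b)$, Proposition \ref{dP-basic_prop} \eqref{dP-basic_prop1} yields the associated normalized $(a,b)$-basic pair $(M,E_M)$ with $E_M=-aK_{M/S}$, where $\alpha\colon M\to S$ is the minimal resolution; Proposition \ref{triplet_prop}, together with the uniqueness in Theorem \ref{kaburi_thm}, identifies the normalized $(a,b)$-fundamental triplet $(X,E,\Delta)$, and one has $E_M=E_M^{\Delta,a-b}$ for the elimination $\phi\colon M\to X$ of $\Delta$. The coefficient of an $\alpha$-exceptional curve $E_0$ in $E_M=-aK_{M/S}$ equals $-a\cdot a(E_0,S)$; since the du Val points of $S$ contribute discrepancy $0$ while the remaining log-terminal points contribute discrepancy in $(-1,0)$, this coefficient is positive exactly over the non-Gorenstein points. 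Hence $|E_M|$ is precisely the exceptional locus of $\alpha$ over the non-Gorenstein locus of $S$, and its dual graph (in the sense fixed in the Notation) is the object to be determined.

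Next, by Theorems \ref{mainthm1} and \ref{mainthm2} the triplet is of one of the finitely many listed types. For each type I would compute $E_M=E_M^{\Delta,a-b}$ and its dual graph as follows. Over a point $P\in\Delta$ the fibre $\phi^{-1}(P)$ is, by Proposition \ref{elim_prop} \eqref{elim_prop1}, the straight chain $\Gamma_1-\cdots-\Gamma_k$ in which $\Gamma_1,\dots,\Gamma_{k-1}$ are $(-2)$-curves and $\Gamma_k$ is a $(-1)$-curve; this is intrinsic to $\phi$ and unaffected by $E$. Examples \ref{Sit1} and \ref{Sit2} then give the coefficients of $E_M$ along such a chain, and the numerical identities recorded in Lemma \ref{ex_lem} (namely $ml=(a-b)k$, respectively $m_1l_1+m_2l_2=(a-b)k$) force $\coeff_{\Gamma_k}E_M=0$, so the terminal $(-1)$-curve never lies in $|E_M|$, while the surviving $\Gamma_i$ are $(-2)$-curves. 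The strict transform $C_M$ of a component $C\le E$ satisfies $(C_M^2)=(C^2)-\deg(\Delta\cap C)$ by Proposition \ref{sing_prop}, which supplies its self-intersection label (for instance a fibre, a minimal section $\sigma$, or a section at infinity $\sigma_\infty$), and the incidences of the $C_M$ with the chains are exactly those displayed in Examples \ref{Sit1} and \ref{Sit2}. Assembling these local pictures over all $P\in\Delta$ produces the dual graph, a disconnected graph (the symbol $\sqcup$ in the Tables) recording that $\Delta$ is supported at more than one non-Gorenstein point.

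Finally, I would observe that all the data entering this computation — the number and mutual position of the points of $\Delta$, the multiplicities $\mult_P\Delta$ and $\mult_P(\Delta\cap C)$, hence the chain lengths, the attaching vertices, and every self-intersection — is fixed by the type itself and is independent of the free parameters $n$, $m$, $t$. Therefore the dual graph depends only on the type, and matching the outcome type-by-type against the drawings completes the proof. The main labor, and the only genuinely delicate step, is the bookkeeping for the types whose $E$ contains $\sigma_\infty$ or several distinct fibres (the $[\ \cdots\ ]_{r\infty\cdots}$, $[\ \cdots\ ]_{st}$ and $[\ \cdots\ ]_{stu}$ families), where several chains must be tracked simultaneously and the self-intersection $-3$ of $\sigma_{\infty,M}$ computed with care; all of this is routine once Examples \ref{Sit1}, \ref{Sit2}, Proposition \ref{elim_prop} and Corollary \ref{twocurve_cor} are in hand.
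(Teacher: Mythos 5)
Your setup is sound, and it is exactly the argument the paper intends (the paper gives no separate proof, treating the corollary as immediate from Theorems \ref{mainthm1} and \ref{mainthm2}): identify $|E_M|$, where $E_M=-aK_{M/S}=E_M^{\Delta,a-b}$, with the exceptional locus of $\alpha$ over the non-Gorenstein points, and then read off the dual graph of $E_M$ type by type from Proposition \ref{elim_prop} and Examples \ref{Sit1}, \ref{Sit2}, using $(C_M^2)=(C^2)-\deg(\Delta\cap C)$ for the strict transforms of the components of $E$.

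The gap is in your final step, where you claim that ``the number and mutual position of the points of $\Delta$'' is fixed by the type. It is not: type \textbf{$[$(3,2),1$]_0$} prescribes only $\Delta\subset l$ with $\deg\Delta=4$, so $|\Delta|$ may consist of one, two, three or four points; the same indeterminacy occurs in \textbf{$[$(5,3),2;1,2$]_1$}, \textbf{$[$(3,2),3;2,3$]_{1\infty}$}, the ${}_{11}$ and ${}_{111}$ families, on $l_1\setminus\{P\}$ in \textbf{$[$(9,5),7$]_{\times 43}$}, on $\sigma_\infty\setminus\{P\}$ in \textbf{$[$(5,3),3;4,7$]_{2\infty 1}$}, and in several further types. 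Since the assertion of the corollary is precisely that the graph is independent of all choices made within a type, your argument as written does not establish it for those types. The repair is short but must be said: every point of $\Delta$ lies on some component of $E$ by Lemma \ref{triplet_lem} \eqref{triplet_lem2}, and one checks from the lists that at every point $P$ whose configuration the type leaves unspecified there is a unique component $E_1\leq E$ through $P$ and it satisfies $\coeff_{E_1}E=a-b$; effectivity of $E_M$ in Example \ref{Sit1} then forces $\mult_P(\Delta\cap E_1)=\mult_P\Delta$, and every coefficient $i(\coeff_{E_1}E-(a-b))=0$ along the chain $\phi^{-1}(P)$, so these chains contribute no vertex to the dual graph of $E_M$. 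With that observation, the data that actually reaches the graph --- the strict transforms of the components of $E$, their self-intersections, their incidences, and the chains carrying positive coefficients --- is determined by the type alone, and your type-by-type matching against Tables \ref{maintable1} and \ref{maintable2} completes the proof.
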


\subsection{Exceptional curves}\label{excep_curve_section}

\begin{proposition}[{cf.\ \cite[Lemma 4.13]{N}}]\label{excep_curve_prop}
Let $a$, $b$ be positive integers with $1/2\leq b/a<1$, $b>1$, 
$S$ be a log del Pezzo surface of the normalized multi-index $(a,b)$, 
$(M, E_M)$ be the associated normalized $(a, b)$-basic pair and 
$(X, E, \Delta)$ be the associated normalized $(a, b)$-fundamental triplet. 
An irreducible curve $C$ on $M$ is exceptional for $\alpha\colon M\to S$ 
if and only if one of the following conditions is satisfied: 
\begin{enumerate}
\renewcommand{\theenumi}{\arabic{enumi}}
\renewcommand{\labelenumi}{(\theenumi)}
\item\label{excep_curve_prop1}
$C$ is a $(-2)$-curve contracted by the elimination $\phi\colon M\to X$;
\item\label{excep_curve_prop2}
$C$ is the strict transform of an irreducible component of $E$. 
\end{enumerate}
\end{proposition}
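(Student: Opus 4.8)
The plan is to reduce the whole statement to the single numerical criterion that an irreducible curve $C$ on $M$ is $\alpha$-exceptional if and only if $(L_M\cdot C)=0$. This is available because $L_M=L_M^{\Delta,1}$ is the fundamental divisor of the associated basic pair, so that $L_M\sim\alpha^*L_S$ with $L_S$ an ample Cartier divisor on $S$ (Lemma \ref{triplet_lem} \eqref{triplet_lem1} and Proposition \ref{dP-basic_prop}); hence $(L_M\cdot C)=(L_S\cdot\alpha_*C)$, which vanishes exactly when $\alpha_*C=0$, i.e.\ when $C$ is contracted. Throughout I would use the two structural identities $E_M=\phi^*E-(a-b)K_{M/X}$ and $K_M+L_M=\phi^*(K_X+L)$, together with the description of $\phi$ as the elimination of $\Delta$ (Proposition \ref{elim_prop}), so that every $\phi$-exceptional curve is a nonsingular rational curve.

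For the ``if'' direction I would argue directly. If $C$ is the strict transform of a component $E_0\leq E$, then $C$ is an irreducible component of $E_M$ (its coefficient equals $\coeff_{E_0}E>0$), so $(L_M\cdot C)=0$ by Definition \ref{basic_dfn} \eqref{basic_dfn4}. If $C$ is a $(-2)$-curve contracted by $\phi$, then $(\phi^*L\cdot C)=0$ while $(K_{M/X}\cdot C)=(K_M\cdot C)=-2-(C^2)=0$ by adjunction, so again $(L_M\cdot C)=0$. In either case $C$ is $\alpha$-exceptional.

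For the ``only if'' direction, let $C$ be $\alpha$-exceptional, so $(L_M\cdot C)=0$. If $C$ is $\phi$-exceptional it is a nonsingular rational curve and the same computation gives $(L_M\cdot C)=(C^2)+2$, forcing $(C^2)=-2$, which is case \eqref{excep_curve_prop1}. If $C$ is not $\phi$-exceptional, set $C_X:=\phi_*C$; it suffices to prove $C\leq E_M$, for then $C$ is a component of $E_M$ not contracted by $\phi$, hence the strict transform of the component $C_X\leq\phi_*E_M=E$, giving case \eqref{excep_curve_prop2}. By Lemma \ref{mds_lem} \eqref{mds_lem2} it is enough to show $C\cap E_M\neq\emptyset$, so suppose to the contrary that $C\cap E_M=\emptyset$. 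Since $-K_{M/S}$ is $\alpha$-nef and effective with $(-K_{M/S}\cdot F)=(F^2)+2\leq 0$ for every $\alpha$-exceptional curve $F$, the negativity lemma shows that on each connected component of $\Exc(\alpha)$ the divisor $-K_{M/S}=(1/a)E_M$ is either zero or strictly positive on all components; as $C$ is disjoint from $E_M$, it therefore lies in a du Val component all of whose curves are $(-2)$-curves, and in particular $(K_M\cdot C)=0$. Combined with $(L_M\cdot C)=0$ and $K_M+L_M=\phi^*(K_X+L)$ this yields $((K_X+L)\cdot C_X)=0$, so $C_X$ lies in the null locus of the nef divisor $K_X+L$.

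The proof then splits into the three geometric situations. When $X=\pr^2$ the divisor $K_X+L\sim(h-3)l$ is ample ($h\geq 4$), so its null locus is empty and no such $C_X$ exists. When $X=\F_n$ with $K_X+L$ big one finds that the only irreducible null-locus curve is the minimal section $\sigma$ (all surviving types have $n=2$), and since $\sigma\leq E$ in each such type, $\sigma_M\leq E_M$, contradicting $C\cap E_M=\emptyset$. When $X=\F_n$ with $K_X+L$ not big one has $K_X+L\sim(n-2)l$, whose null locus consists of fibers, so $C_X$ is a fiber with $\deg(\Delta\cap C_X)=(L\cdot C_X)=2$; but a fiber meeting $\Delta$ in length $2$ is a component of $E$, again contradicting $C\cap E_M=\emptyset$. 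These last two assertions---that every null-locus curve is in fact a component of $E$, so that no ``spurious'' $(-2)$-curve arises as the strict transform of a section or fiber outside $E$---are exactly the content of Claims \ref{mult_claim02}, \ref{big_curve_claim} and \ref{small_possible_claim} \eqref{small_possible_claim2}, read against the explicit list of Theorems \ref{mainthm1} and \ref{mainthm2} (where the precise placement and tangency of $\Delta$ must be inspected). Verifying them type by type is the main obstacle; everything else in the argument is formal.
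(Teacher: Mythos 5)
Your proof is correct in substance, but it takes a genuinely different route from the paper's at the crucial step. Both proofs share the reduction to the criterion $(L_M\cdot C)=0$ and the treatment of $\phi$-exceptional curves; the difference is how one shows that a non-$\phi$-exceptional $\alpha$-exceptional curve must be the strict transform of a component of $E$. The paper assumes $C_X\not\leq E$, invokes Claims \ref{mult_claim}, \ref{big_curve_claim} and \ref{small_possible_claim} (whose proofs rest on multiplicity-sequence and arithmetic-genus computations) to pin down the numerical class of $C_X$ (line, fiber, conic, section at infinity, etc.), and then uses the intersection inequality of Corollary \ref{twocurve_cor} to force $C_X\leq E$, a contradiction. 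You instead reduce, via Lemma \ref{mds_lem} \eqref{mds_lem2}, to showing $C\cap E_M\neq\emptyset$, and under the contrary assumption apply the negativity lemma \cite[Lemma 3.39]{KoMo} to $E_M=-aK_{M/S}$ (whose negative is $\alpha$-nef since $(E_M\cdot F)=a((F^2)+2)\leq 0$ for exceptional $F$) to get the all-or-nothing dichotomy on fibers of $\alpha$; this places $C$ in a du Val fiber, yields the extra equation $(K_M\cdot C)=0$, and hence puts $C_X$ in the null locus of the nef divisor $K_X+L$, which you then compute directly. What your route buys is that the cases $X=\pr^2$ and $X=\F_n$ with $K_X+L$ big become immediate (ampleness, resp.\ a one-line null-locus computation plus $\sigma\leq E$), with no genus arithmetic; what it costs is that the non-big case still requires consulting the classified types (or, for $k<2n$, Claim \ref{small_possible_claim} \eqref{small_possible_claim1}) to see that a fiber with $\deg(\Delta\cap\,\cdot\,)=2$ is a component of $E$, which you honestly flag. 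Two minor corrections: first, the phrase ``$-K_{M/S}$ is $\alpha$-nef'' has the sign backwards --- it is $K_{M/S}$ (equivalently $-E_M$) that is $\alpha$-nef, which is exactly the hypothesis the negativity lemma needs, and the inequality you write alongside is the correct one; second, your attribution of the statement ``every null-locus curve is a component of $E$'' to Claims \ref{mult_claim}, \ref{big_curve_claim} and \ref{small_possible_claim} is imprecise, since those claims only determine the numerical classes of such curves --- the membership in $E$ is precisely the extra step the paper settles with Corollary \ref{twocurve_cor} against the type list, and in your argument it is supplied instead by the null-locus computation together with the type inspection.
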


\begin{proof}
The curve $C$ is $\alpha$-exceptional if and only if $(L_M\cdot C)=0$, where 
$L_M$ is the fundamental divisor of $(M, E_M)$. 
Set $C_X:=\phi_*C$. 
If $C$ is $\phi$-exceptional, then $(L_M\cdot C)=0$ if and only if $C$ is a 
$(-2)$-curve. 
Assume that $C$ is not $\phi$-exceptional 
and $C_X$ is not an irreducible component of $E$. 
By Claims \ref{mult_claim}, \ref{big_curve_claim} and \ref{small_possible_claim}, 
one of the following holds: 
\begin{enumerate}
\renewcommand{\theenumi}{\roman{enumi}}
\renewcommand{\labelenumi}{(\theenumi)}
\item\label{excep_curve_proof1}
$X=\pr^2$ and $C_X$ is a line.
\item\label{excep_curve_proof2}
$X=\F_n$ and $C_X$ is a fiber.
\item\label{excep_curve_proof3}
$X=\F_3$, $K_X+L$ is not big, $6\leq\deg\Delta_X\leq 8$
and $C_X$ is a section at infinity, where $L$ is the fundamental divisor.  
\end{enumerate}
In particular, $C_X$ is nonsingular. Thus $\deg(\Delta\cap C_X)=(L\cdot C_X)$ holds. 
However, by Corollary \ref{twocurve_cor}, the curve $C_X$ must be equal to 
an irreducible component of $E$, which leads to a contradiction. 
Therefore the assertion follows. 
\end{proof}

\end{document}